\documentclass[11pt,a4paper]{article}
\usepackage{graphicx}
\usepackage{hyperref}
\hypersetup{
    colorlinks=true,
    linkcolor=blue,
    filecolor=magenta,      
    urlcolor=cyan,
}
\usepackage[font=sf, labelfont={sf,bf}, margin=1cm]{caption}
\usepackage{subcaption}

\usepackage{xcolor}
\usepackage{comment}
\usepackage[utf8]{inputenc}
\usepackage{titlesec}
\titleformat*{\subsection}{\bfseries}
\titleformat*{\subsubsection}{\it\bfseries}

\titleformat{\paragraph}
{\normalfont\normalsize\bfseries}{\theparagraph}{1em}{}
\usepackage{setspace}
\usepackage{amsmath}
\usepackage{amsthm}
\usepackage{amsfonts}
\usepackage{mathtools}
\usepackage{bm}

\usepackage{algorithm}
\usepackage[]{algpseudocode}
\makeatletter

\usepackage{longtable}
    \usepackage{multirow}

\usepackage{siunitx,booktabs}
\sisetup{
    table-auto-round
}   
\usepackage{tablefootnote}

\numberwithin{Theorem}{section}
\numberwithin{Definition}{section}
\numberwithin{Lemma}{section}
\numberwithin{Algorithm}{section}
\numberwithin{equation}{section}

\newtheorem{theorem}{Theorem}[section]

\newtheorem{lemma}[theorem]{Lemma} 
\newtheorem{assumption}{Assumption}
\newtheorem{premise}{Premise}
\newtheorem{definition}{Definition}

\usepackage{geometry}
 \geometry{
 a4paper,
 total={170mm,257mm},
 left=20mm,right = 20mm,
 top=30mm,bottom = 30mm,
 }
%
%
%
%
\begin{document}

	\title{An Interior Point-Proximal Method of Multipliers for Convex Quadratic Programming}
\author{Spyridon Pougkakiotis   \and  Jacek Gondzio}

\maketitle




\begin{abstract}
\noindent In this paper we combine an infeasible Interior Point Method (IPM) with the Proximal Method of Multipliers (PMM). The resulting algorithm (IP-PMM) is interpreted as a primal-dual regularized IPM, suitable for solving linearly constrained convex quadratic programming problems. We apply few iterations of the interior point method to each sub-problem of the proximal method of multipliers. Once a satisfactory solution of the PMM sub-problem is found, we update the PMM parameters, form a new IPM neighbourhood and repeat this process. Given this framework, we prove polynomial complexity of the algorithm, under standard assumptions. To our knowledge, this is the first polynomial complexity result for a primal-dual regularized IPM. The algorithm is guided by the use of a single penalty parameter; that of the logarithmic barrier. In other words, we show that IP-PMM inherits the polynomial complexity of IPMs, as well as the strict convexity of the PMM sub-problems. The updates of the penalty parameter are controlled by IPM, and hence are well-tuned, and do not depend on the problem solved. Furthermore, we study the behavior of the method when it is applied to an infeasible problem, and identify a necessary condition for infeasibility. The latter is used to construct an infeasibility detection mechanism. Subsequently, we provide a robust implementation of the presented algorithm and test it over a set of small to large scale linear and convex quadratic programming problems. The numerical results demonstrate the benefits of using regularization in IPMs as well as the reliability of the method.
\end{abstract}

\section{Introduction} \label{intro}
\subsection{Primal-Dual Pair of Convex Quadratic Programming Problems}
\par In this paper, we consider the following primal-dual pair of linearly constrained convex quadratic programming problems, in the standard form:
\begin{equation} \label{non-regularized primal} \tag{P}
\text{min}_{x} \ \big( c^Tx + \frac{1}{2}x^T Q x \big), \ \ \text{s.t.}  \  Ax = b,   \ x \geq 0, 
\end{equation}
\begin{equation} \label{non-regularized dual} \tag{D}
\text{max}_{x,y,z}  \ \big(b^Ty - \frac{1}{2}x^T Q x\big), \ \ \text{s.t.}\  -Qx + A^Ty + z = c,\ z\geq 0,
\end{equation}
\noindent with $c,x,z \in \mathbb{R}^n$, $b,y \in \mathbb{R}^m$, $A \in \mathbb{R}^{m\times n}$, and symmetric positive semi-definite $Q \in \mathbb{R}^{n \times n}$ (i.e. $Q \succeq 0$). Without loss of generality we assume that $m \leq n$. Duality, between the problems (\ref{non-regularized primal})-(\ref{non-regularized dual}), arises by introducing the Lagrangian function of the primal, using $y \in \mathbb{R}^m$ and $z \in \mathbb{R}^n,\ z \geq 0$, as the Lagrange multipliers for the equality and inequality constraints, respectively. Hence, we obtain:
\begin{equation} \label{standard Lagrangian} 
\mathcal{L}(x,y,z) = c^T x + \frac{1}{2}x^T Q x - y^T(Ax - b) - z^T x.
\end{equation}

\par Using the Lagrangian function, one can formulate the first-order optimality conditions (known as Karush--Kuhn--Tucker (KKT) conditions) for this primal-dual pair. In particular, we define the vector $w = (x^T,y^T,z^T)^T$, and compute the gradient of $\mathcal{L}(w)$. Using $\nabla \mathcal{L}(w)$,  as well as the complementarity conditions, we may define a function $F(w): \mathbb{R}^{2n + m} \mapsto \mathbb{R}^{2n+m}$, using which, we write the KKT conditions as follows:

\begin{equation} \label{non-regularized FOC}
F(w) = \begin{bmatrix}
c + Qx - A^T y - z \\
Ax - b\\
XZe_n\\
\end{bmatrix}
= 
\begin{bmatrix}
0\\
0\\
0\\
\end{bmatrix}, \qquad (x,z) \geq (0,0),
\end{equation} 
\noindent where $e_n$ denotes the vector of ones of size $n$, while $X,\ Z \in \mathbb{R}^{n \times n}$ denote the diagonal matrices satisfying $X^{ii} = x^i$ and $Z^{ii} = z^i$, $\forall\ i \in \{1,\cdots,n\}$. For the rest of this manuscript, superscripts will denote the components of a vector/matrix. For simplicity of exposition, when referring to convex quadratic programming problems, we implicitly assume that the problems are linearly constrained. If both (\ref{non-regularized primal}) and (\ref{non-regularized dual}) are feasible problems, it can easily be verified that there exists an optimal primal-dual triple $(x,y,z)$, satisfying the KKT optimality conditions of this primal-dual pair (see for example in \cite[Prop. 2.3.4]{book_2}). 

\subsection{A Primal-Dual Interior Point Method}
\par Primal-dual Interior Point Methods (IPMs) are popular for simultaneously solving (\ref{non-regularized primal}) and (\ref{non-regularized dual}), iteratively. As indicated in the name, primal-dual IPMs act on both primal and dual variables. There are numerous variants of IPMs and the reader is referred to \cite{paper_16} for an extended literature review. In this paper, an infeasible primal-dual IPM is presented. Such methods are called infeasible because they allow intermediate iterates of the method to be infeasible for (\ref{non-regularized primal})-(\ref{non-regularized dual}), in contrast to feasible IPMs, which require intermediate iterates to be strictly feasible.
\par Interior point methods handle the non-negativity constraints of the problems with logarithmic barriers in the objective. That is, at each iteration, we choose a \textit{barrier parameter} $\mu$ and form the logarithmic \textit{barrier primal-dual pair}:
\begin{equation} \label{Primal Barrier Problem}
\begin{split}
\text{min}_{x} \ \big( c^Tx + \frac{1}{2}x^T Q x - \mu \sum_{j=1}^n \ln x^j \big) \ \ \text{s.t.} \  Ax = b,  \\ 
\end{split}
\end{equation}
\begin{equation} \label{Dual Barrier Problem}
\begin{split}
\text{max}_{x,y,z} \ \big(\ b^Ty - \frac{1}{2}x^T Q x + \mu \sum_{j=1}^n \ln  z^j \big) \ \ \text{s.t.} \  -Qx + A^Ty + z = c,\\
\end{split}
\end{equation}
\noindent in which non-negativity constraints $x>0$ and $z>0$ are implicit. We form the KKT optimality conditions of the pair (\ref{Primal Barrier Problem})-(\ref{Dual Barrier Problem}), by introducing the Lagrangian of the primal barrier problem:

\begin{equation*}
\mathcal{L}^{\text{IPM}}_{\mu}(x,y) = c^T x + \frac{1}{2} x^T Q x -y^T (Ax - b) - \mu \sum_{i=1}^n \ln x^j.
\end{equation*} 

\noindent Equating the gradient of the previous function to zero, gives the following conditions:

\begin{equation*}
\begin{split}
\nabla_x \mathcal{L}^{\text{IPM}}_{\mu} (x,y) = &\ c + Qx - A^T y -\mu X^{-1} e_n  = 0, \\
\nabla_y \mathcal{L}^{\text{IPM}}_{\mu} (x,y) = &\ Ax - b   = 0.\\
\end{split}
\end{equation*}

\noindent Using the variable $z = \mu X^{-1}e_n$, the final conditions read as follows:

\begin{equation*}
\begin{split}
 c + Qx - A^T y - z  & = 0\\
 Ax - b   & = 0\\
XZe_n - \mu e_n & = 0.
\end{split}
\end{equation*}

\par At each IPM iteration, we want to approximately solve the following mildly non-linear system:

\begin{equation} \label{IPM FOC}
F^{\text{IPM}}_{\sigma,\mu}(w) = \begin{bmatrix}
c + Qx - A^Ty - z \\
b - Ax \\
XZe_n-\sigma \mu e_n\\
\end{bmatrix} = \begin{bmatrix}
0\\
0\\
0
\end{bmatrix},
\end{equation}
\noindent where $F^{\text{IPM}}_{\sigma,\mu}(w) = 0$ is a slightly perturbed form of the previously presented optimality conditions. In particular, $\sigma \in (0,1)$ is a \textit{centering parameter} which determines how fast $\mu$ will be forced to decrease at the next IPM iteration. For $\sigma = 1$, we recover the barrier optimality conditions, while for $\sigma = 0$ we recover the initial problems' optimality conditions (\ref{non-regularized FOC}). The efficiency of the method depends heavily on the choice of $\sigma$. In fact, various improvements of the traditional IPM schemes have been proposed in the literature, which solve the previous system for multiple carefully chosen values of the centering parameter $\sigma$ and of the right hand side, at each IPM iteration. These are the so called \textit{predictor-corrector} schemes, proposed for the first time in \cite{paper_6}. Various extensions of such methods have been proposed and analyzed in the literature (e.g. see \cite{paper_28,paper_29} and references therein). However, for simplicity of exposition, we will follow the traditional approach; $\sigma$ is chosen heuristically at each iteration, and the previous system is solved only once.

\par In order to approximately solve the system $F^{\text{IPM}}_{\sigma,\mu}(w) = 0$ for each value of $\mu$, the most common approach is to apply the Newton method. Newton method is favored for systems of this form, due to the \textit{self-concordance} of the function $\ln(\cdot)$. For more details on this subject, the interested reader is referred to \cite[Chapter 2]{book_3}. At the beginning of the $k$-th iteration of the IPM, for $k \geq 0$, we have available an iterate $w_k = (x_k^T,y_k^T,z_k^T)^T$, and a barrier parameter $\mu_k$, defined as $\mu_k = \frac{x_k^Tz_k}{n}$. We choose a value for the centering parameter $\sigma_k \in (0,1)$ and form the Jacobian of $F^{\text{IPM}}_{\sigma_k, \mu_k}(\cdot)$, evaluated at $w_k$. Then, the Newton direction is determined by solving the following system of equations:

\begin{equation} \label{non-regularized Newton System}
\begin{bmatrix} 
-Q &  A^T & I\\
A  & 0 & 0\\
Z_k &  0  &X_k 
\end{bmatrix}
\begin{bmatrix}
\Delta x_k\\
\Delta y_k\\
\Delta z_k
\end{bmatrix}
= -
\begin{bmatrix}
-c - Qx_k  + A^T y_k + z_k\\
Ax_k  - b\\
 X_kZ_ke_n - \sigma_{k} \mu_k e_n
\end{bmatrix}.
\end{equation}
\par Notice that as $\mu_k \rightarrow 0$, the optimal solution of (\ref{Primal Barrier Problem})-(\ref{Dual Barrier Problem}) converges to the optimal solution of (\ref{non-regularized primal})-(\ref{non-regularized dual}). Polynomial convergence of such methods has been established multiple times in the literature (see for example \cite{book_4,paper_30} and the references therein). 

\subsection{Proximal Point Methods}
\subsubsection{Primal Proximal Point Method}

\par One possible method for solving the primal problem (\ref{non-regularized primal}), is the so called proximal point method. Given an arbitrary starting point $x_0$, the $k$-th iteration of the method is summarized by the following minimization problem:
\begin{equation*}
x_{k+1} = \text{arg}\min_x \{c^T x + \frac{1}{2}x^T Q x + \frac{\mu_k}{2}\|x - x_k\|_2^2,\ \ \text{s.t.} \ Ax = b,\ x \geq 0\},
\end{equation*}
\noindent where $\mu_k > 0$ is a non-increasing sequence of penalty parameters, and $x_k$ is the current estimate for an optimal solution of (\ref{non-regularized primal}) (the use of $\mu_k$ is not a mistake; as will become clearer in Section \ref{section Algorithmic Framework}, we intend to use the barrier parameter $\mu_k$ to control both the IPM and the proximal method). Notice that such an algorithm is not practical, since we have to solve a sequence of sub-problems of similar difficulty to that of (\ref{non-regularized primal}). Nevertheless, the proximal method contributes the $\mu_k I$ term to the Hessian of the objective, and hence the sub-problems are strongly convex. This method is known to achieve a linear convergence rate (and possibly superlinear if $\mu_k \rightarrow 0$ at a suitable rate), as shown in \cite{paper_13,paper_12}, among others, even in the case where the minimization sub-problems are solved approximately. Various extensions of this method have been proposed in the literature. For example, one could employ different penalty functions other than the typical 2-norm penalty presented previously (e.g. see \cite{paper_20,paper_14,paper_31}). 
\par Despite the  fact that such algorithms are not practical, they have served as a powerful theoretical tool and a basis for other important methods. For an extensive review of the applications of standard primal proximal point methods, we refer the reader to \cite{paper_21}.

\subsubsection{Dual Proximal Point Method}

\par It is a well-known fact, observed for the first time in \cite{paper_13}, that the application of the proximal point method on the dual problem, is equivalent to the application of the \textit{augmented Lagrangian method} on the primal problem, which was proposed for the first time in \cite{paper_32,paper_33}. In view of the previous fact, we will present the derivation of the augmented Lagrangian method for the primal problem (\ref{non-regularized primal}), while having in mind that an equivalent reformulation of the model results in the proximal point method for the dual (\ref{non-regularized dual}) (see \cite{paper_13,paper_2} or \cite[Chapter 3.4.4]{book_5}).

\par We start by defining the augmented Lagrangian function of (\ref{non-regularized primal}). Given an arbitrary starting guess for the dual variables $y_0$, the augmented Lagrangian function is defined at the $k$-th iteration as:

\begin{equation} \label{ALM Lagrangian}
\mathcal{L}^{\text{ALM}}_{\mu_k}(x;y_k) = c^Tx + \frac{1}{2}x^T Q x -y_k^T (Ax - b) + \frac{1}{2\mu_k}\|Ax-b\|_2^2,
\end{equation}
\noindent where $\mu_k > 0$ is a non-increasing sequence of penalty parameters and $y_k$ is the current estimate of an optimal Lagrange multiplier vector. The augmented Lagrangian method (ALM) is summarized below:
\begin{equation*}
\begin{split}
x_{k+1} = &\ \text{arg}\min_{x} \{ \mathcal{L}^{\text{ALM}}_{\mu_k}(x;y_k), \ \ \text{s.t.}\ x \geq 0\}, \\
y_{k+1} = &\ y_k - \frac{1}{\mu_k} (Ax_{k+1} - b).
\end{split}
\end{equation*}

\par Notice that the update of the Lagrange multiplier estimates can be interpreted as the application of the dual ascent method. A different type of update would be possible, however, the presented update is cheap and effective, due to the strong concavity of the proximal (``regularized") dual problem (since $\mu_k > 0$). Convergence and iteration complexity of such methods have been established multiple times (see for example \cite{paper_13,paper_34}). There is a vast literature on the subject of augmented Lagrangian methods. For a plethora of technical results and references, the reader is referred to \cite{book_6}. For convergence results of various extended versions of the method, the reader is referred to \cite{paper_31}.
\par It is worth noting here that a common issue, arising when using augmented Lagrangian methods, is that of the choice of the penalty parameter. To the authors' knowledge, an optimal tuning for this parameter is still unknown.
\subsubsection{Proximal Method of Multipliers}

\par In \cite{paper_13}, the author presented, for the first time, the \textit{proximal method of multipliers} (PMM). The method consists of applying both primal and dual proximal point methods for solving (\ref{non-regularized primal})-(\ref{non-regularized dual}). For an arbitrary starting point $(x_0,y_0)$, and using the already defined augmented Lagrangian function, given in (\ref{ALM Lagrangian}), PMM can be summarized by the following iteration:
\begin{equation} \label{PMM sub-problem}
\begin{split}
x_{k+1} =&\ \text{arg}\min_x \{ \mathcal{L}^{\text{ALM}}_{\mu_k}(x;y_k) + \frac{\mu_k}{2}\|x-x_k\|_2^2,\ \text{s.t.}\ x \geq 0\},\\
y_{k+1} = &\ y_k - \frac{1}{\mu_k} (Ax_{k+1} - b),
\end{split}
\end{equation}

\noindent where $\mu_k$ is a positive non-increasing penalty parameter. One can observe that at every iteration of the method, the primal problem that we have to solve is strongly convex, while its dual, strongly concave. As shown in \cite{paper_13}, the addition of the term $\mu_k \|x - x_k\|_2^2$, in the augmented Lagrangian method does not affect its convergence rate, while ensuring better numerical behaviour of its iterates. An extension of this algorithm, allowing for the use of more general penalties, can be found in \cite{paper_31}.

\par We can write \eqref{PMM sub-problem} equivalently by making use of the maximal monotone operator $T_{\mathcal{L}}: \mathbb{R}^{n+m} \mapsto \mathbb{R}^{n+m}$ associated to \eqref{non-regularized primal}-\eqref{non-regularized dual} (see \cite{paper_39,paper_13}), that is defined as:

\begin{equation} \label{Primal Dual Maximal Monotone Operator}
T_{\mathcal{L}}(x,y) = \{(u,v): v \in Qx + c - A^Ty + \partial \delta_+(x), u = Ax-b \},
\end{equation}
\noindent where $\delta_+(\cdot)$ is an indicator function defined as:
\begin{equation} \label{Indicator function}
\delta_+(x) = 
     \begin{cases}
       \infty &\quad\text{if } \exists\ j: x^j < 0, \\
       0 &\quad\text{otherwise.} \\ 
     \end{cases}
\end{equation}
\noindent and $\partial(\cdot)$ denotes the sub-differential of a function. In our case, we have that (see \cite[Section 23]{book_7}): 
\begin{equation*}
z \in \partial \delta_+(x) \Leftrightarrow \begin{cases}
     z^j = 0 & \text{if }  x^j > 0,\\
    z^j \leq 0 & \text{if } x^j = 0.                                             \end{cases},\ \forall\ j = \{1,\cdots,n\}.
\end{equation*}
\noindent By convention, if there exists a component $j$ such that $x_j < 0$, we have that $\partial \delta_+(x) = \{\emptyset\}$. Given a bounded pair $(x^*,y^*)$ such that: $0 \in T_{\mathcal{L}}(x^*,y^*)$, we can retrieve a vector $z^* \in \partial \delta_+(x^*)$, using which $(x^*,y^*,-z^*)$ is an optimal solution for \eqref{non-regularized primal}-\eqref{non-regularized dual}. By letting:
\begin{equation} \label{Primal Dual Proximal Operator}
P_k = (I_{(m+n)} + \frac{1}{\mu_k}T_{\mathcal{L}})^{-1},
\end{equation}
\noindent we can express \eqref{PMM sub-problem} as: 
\begin{equation} \label{PMM Operator Subproblem}
(x_{k+1},y_{k+1}) = P_k(x_k,y_k),
\end{equation}
\noindent and it can be shown that $P_k$ is single valued and non-expansive (see \cite{paper_39}).
\subsection{Regularization in Interior Point Methods}
\par In the context of interior point methods, it is often beneficial to include a regularization in order to improve the spectral properties of the system matrix in (\ref{non-regularized Newton System}). For example, notice that if the constraint matrix $A$ is rank-deficient, then the matrix in (\ref{non-regularized Newton System}) is not invertible. The latter can be immediately addressed by the introduction of a dual regularization, say $\delta > 0$, ensuring that $\text{rank}([A\ |\ \delta I_m] ) = m$. For a detailed analysis of the effect of dual regularization on such systems, the reader is referred to \cite{paper_15}. On the other hand, the most common and efficient approach in practice is to eliminate the variables $\Delta z$ from system (\ref{non-regularized Newton System}), and solve the following symmetric reduced (\textit{augmented}) system instead:

\begin{equation} \label{non-regularized Augmented System}
\begin{bmatrix} 
-(Q+\Theta_k^{-1}) &   A^T \\
A & 0
\end{bmatrix}
\begin{bmatrix}
\Delta x_k\\ 
\Delta y_k
\end{bmatrix}
= 
\begin{bmatrix}
c + Qx_k - A^T y_k - \sigma_{k} \mu_k X_k^{-1} e\\
b-Ax_k
\end{bmatrix},
\end{equation}
\noindent where $\Theta_k = X_k Z_k^{-1}$. Since $X_k z_k \rightarrow 0$ close to optimality, one can observe that the matrix $\Theta_k$ will contain some very large and some very small elements. Hence, the matrix in (\ref{non-regularized Augmented System}) will be increasingly ill-conditioned, as we approach optimality. The introduction of a primal regularization, say $\rho > 0$, can ensure that the matrix $Q+\Theta_k^{-1} + \rho I_n$ will have eigenvalues that are bounded away from zero, and hence a significantly better worst-case conditioning than that of $Q+\Theta_k^{-1}$. In other words, regularization is commonly employed in IPMs, as a means of improving robustness and numerical stability (see \cite{paper_1}). As we will discuss later, by introducing regularization in IPMs, one can also gain efficiency. This is because regularization transforms the matrix in (\ref{non-regularized Augmented System}) into a quasi-definite one. It is known that such matrices can be factorized efficiently (see \cite{paper_4}).
\par In view of the previous discussion, one can observe that a very natural way of introducing primal regularization to problem (\ref{non-regularized primal}), is through the application of the primal proximal point method. Similarly, dual regularization can be incorporated through the application of the dual proximal point method. This is a well-known fact. The authors in \cite{paper_1} presented a primal-dual regularized IPM, and interpreted this regularization as the application of the proximal point method. Consequently, the authors in \cite{paper_2} developed a primal-dual regularized IPM, which applies PMM to solve \eqref{non-regularized primal}, and a single IPM iteration is employed for approximating the solution of each PMM sub-problem. There, global convergence of the method was proved, under some assumptions. A variation of the method proposed in \cite{paper_2} is given in \cite{paper_35}, where general non-diagonal regularization matrices are employed, as a means of further improving factorization efficiency.
\par Similar ideas have been applied in the context of IPMs for general non-linear programming problems. For example, the authors in \cite{paper_17} presented an interior point method combined with the augmented Lagrangian method, and proved that, under some general assumptions, the method converges at a superlinear rate. Similar approaches can be found in \cite{paper_18,paper_36,paper_37}, among others. 
\par In this paper, we develop a path-following primal-dual regularized IPM for solving convex quadratic programming problems. The algorithm is obtained by applying one or a few iterations of the infeasible primal-dual interior point method in order to solve sub-problems arising from the primal-dual proximal point method. Under standard assumptions, we prove polynomial complexity of the algorithm and provide global convergence guarantees. To our knowledge, this is the first polynomial complexity result for a general primal-dual regularized IPM scheme. Notice that a complexity result is given for a primal regularized IPM for linear complementarity problems in \cite{paper_38}. However, the authors significantly alter the Newton directions, making their method very difficult to generalize and hard to achieve efficiency in practice. An important feature of the presented method is that it makes use of only one penalty parameter, that is the logarithmic penalty term. The aforementioned penalty has been extensively studied and understood, and as a result, IPMs achieve fast and reliable convergence in practice. This is not the case for the penalty parameter of the proximal methods. In other words, IP-PMM inherits the fast and reliable convergence properties of IPMs, as well as the strong convexity of the PMM sub-problems, hence improving the conditioning of the Newton system solved at each IPM iteration, while providing a reliable tuning for the penalty parameter, independently of the problem at hand. The proposed approach is implemented and its reliability is demonstrated through extensive experimentation. The implementation slightly deviates from the theory, however, most of the theoretical results are verified in practice. The main purpose of this paper is to provide a reliable method that can be used for solving general convex quadratic problems, without the need of pre-processing, or of previous knowledge about the problems. The implemented method is supported by a novel theoretical result, indicating that regularization alleviates various issues arising in IPMs, without affecting their important worst-case polynomial complexity. As a side-product of the theory, an implementable infeasibility detection mechanism is also derived and tested in practice.
\par Before completing this section, let us introduce some notation that is used in the rest of this paper. An iteration of the algorithm is denoted by $k \in \mathbb{N}$. Given an arbitrary matrix (vector) $A$ ($x$, respectively), $A_k$ ($x_k$) denotes that $A$ ($x$) depends on the iteration $k$. An optimal solution to the pair \eqref{non-regularized primal}-\eqref{non-regularized dual} will be denoted as $(x^*,y^*,z^*)$. Optimal solutions of different primal-dual pairs will be denoted using an appropriate subscript, in order to distinguish them. For example, we use the notation $(x_r^*,y_r^*,z_r^*)$, for representing an optimal solution for a PMM sub-problem. The subscript is employed for distinguishing the ``regularized" solution, from the solution of the initial problem, that is $(x^*,y^*,z^*)$. Any norm (semi-norm respectively) is denoted by $\| \cdot \|_{\nu}$, where $\nu$ is used to distinguish between different norms. For example, the 2-norm (Euclidean norm) will be denoted as $\|\cdot\|_2$. When a given scalar is assumed to be independent of $n$ and $m$, we mean that this quantity does not depend on the problem dimensions. Given two vectors of the same size $x$, $y$, $x \geq y$ denotes that the inequality holds component-wise.  Given two logical statements $T_1,\ T_2$, the condition $T_1 \wedge T_2$ is true only when both $T_1$ and $T_2$ are true.  Let an arbitrary matrix $A$ be given. The maximum (minimum) singular value of $A$ is denoted as $\eta_{\max}(A)$ ($\eta_{\min}(A)$). Similarly, if $A$ is square, its maximum eigenvalue is denoted as $\nu_{\max}(A)$. Given a set of indices, say $\mathcal{B} \subset \{1,\ldots,n\}$, and an arbitrary vector (or matrix) $x \in \mathbb{R}^n$  ($A \in \mathbb{R}^{n \times n}$), $x^{\mathcal{B}}$ ($A^{\mathcal{B}}$, respectively) denotes that sub-vector (sub-matrix) containing the elements of $x$ (columns and rows of $A$) whose indices belong to $\mathcal{B}$. Finally, the cardinality of $\mathcal{B}$ is denoted as $|\mathcal{B}|$.
\par The rest of the paper is organized as follows. In  Section \ref{section Algorithmic Framework}, we provide the algorithmic framework of the method. Consequently, in Section \ref{section Polynomial Convergence}, we prove polynomial complexity of the algorithm, and global convergence is established. In Section \ref{section Infeasible problems}, a necessary condition for infeasibility is derived, which is later used to construct an infeasibility detection mechanism. Numerical results of the implemented method are presented and discussed in Section \ref{section numerical results}. Finally, we derive some conclusions in Section \ref{section conclusions}.
\section{Algorithmic Framework} \label{section Algorithmic Framework}

\par In the previous section, we presented all the necessary tools for deriving the proposed Interior Point-Proximal Method of Multipliers (IP-PMM). Effectively, we would like to merge the proximal method of multipliers with the infeasible interior point method. For that purpose, assume that, at some iteration $k$ of the method, we have available an estimate $\lambda_k$ for a Lagrange multiplier vector.  Similarly, we denote by $\zeta_k$ the estimate of a primal solution. We define the proximal penalty function that has to be minimized at the $k$-th iteration of PMM, for solving (\ref{non-regularized primal}), given the estimates $\lambda_k,\ \zeta_k$:
\begin{equation} \label{PMM lagrangian}
\begin{split}
\mathcal{L}^{PMM}_{\mu_k} (x;\zeta_k, \lambda_k) = c^Tx + \frac{1}{2}x^T Q x -\lambda_k^T (Ax - b) + \frac{1}{2\mu_k}\|Ax-b\|_2^2 + \frac{\mu_k}{2}\|x-\zeta_k\|_2^2,
\end{split}
\end{equation}
\noindent with $\mu_k > 0$ some non-increasing penalty parameter. In order to solve the PMM sub-problem  (\ref{PMM sub-problem}), we will apply one (or a few) iterations of the previously presented infeasible IPM. To do that, we alter \eqref{PMM lagrangian}, by including logarithmic barriers, that is:
\begin{equation} \label{Proximal IPM Penalty}
\begin{split}
\mathcal{L}^{IP-PMM}_{\mu_k} (x;\zeta_k, \lambda_k) =\mathcal{L}^{PMM}_{\mu_k} (x;\zeta_k, \lambda_k) - \mu_k \sum_{j=1}^n \ln x^j,
\end{split}
\end{equation}
\noindent and we treat $\mu_k$ as the barrier parameter. In order to form the optimality conditions of this sub-problem, we equate the gradient of $\mathcal{L}^{IP-PMM}_{\mu_k}(\cdot;\lambda_k, \zeta_k)$ to the zero vector, i.e.:
\begin{equation*}
c + Qx - A^T \lambda_k + \frac{1}{\mu_k}A^T(Ax - b) + \mu_k (x - \zeta_k) - \mu_k X^{-1}e_n = 0.
\end{equation*}
\par Following the developments in \cite{paper_17}, we can define the variables: $y = \lambda_k - \frac{1}{\mu_k}(Ax - b)$ and $z = \mu_k X^{-1}e_n$, to get the following (equivalent) system of equations (first-order optimality conditions):
\begin{equation} \label{Proximal IPM FOC} \begin{bmatrix}
c + Qx - A^T y - z + \mu_k (x-\zeta_k)\\
Ax + \mu_k (y - \lambda_k) - b\\
Xz -  \mu_k e_n
\end{bmatrix} = \begin{bmatrix}
0\\
0\\
0
\end{bmatrix}.
\end{equation}

\par Let us introduce the following notation, that will be used later.
\begin{definition}
Let two real-valued positive functions be given: $T(x) : \mathbb{R}_+ \mapsto \mathbb{R}_+,\ f(x): \mathbb{R}_+ \mapsto \mathbb{R}_+$, with $\mathbb{R}_+ \coloneqq \{x \in \mathbb{R} : x \geq 0 \}$. We say that:
\begin{itemize}
\item[$\bullet$] $T(x) = O(f(x))$ (as $x \rightarrow \infty$) if and only if, there exist constants $c,\ x_0$, such that: $$ T(x) \leq c f(x),\ \ \text{for all}\ x \geq x_0.$$
\item[$\bullet$] $T(x) = \Omega(f(x))$ if and only if, there exist constants $c,\ x_0$, such that: $$T(x) \geq c f(x), \ \ \text{for all}\ x \geq x_0.$$
\item[$\bullet$] $T(x) = \Theta(f(x))$ if and only if, $T(x) = O(f(x))$ and $T(x) = \Omega(f(x)).$
\end{itemize}
\end{definition}

\noindent Next, given two arbitrary vectors $b \in \mathbb{R}^m,\ c \in \mathbb{R}^n$, we define the following semi-norm:
\begin{equation} \label{semi-norm definition}
\|(b,c)\|_{\mathcal{A}} \coloneqq \min_{x,y,z}\large\{\|(x,z)\|_2\ :\ Ax = b, -Qx + A^Ty + z = c\large\}.
\end{equation}
\noindent This semi-norm has been used before in \cite{paper_43}, as a way to measure infeasibility for the case of linear programming problems ($Q = 0$). For a discussion on the properties of the aforementioned semi-norm, as well as how one can evaluate it (using the QR factorization of $A$), the reader is referred to \cite[Section 4]{paper_43}.
\paragraph{Starting Point.}
\par  At this point, we are ready to define the starting point for IP-PMM. For that, we set $(x_0,z_0) = \rho(e_n,e_n)$, for some $\rho > 0$. We also set $y_0$ to some arbitrary vector (e.g. $y_0 = e_m$), such that $\|y_0\|_{\infty} = O(1)$ (i.e. the absolute value of its entries is independent of $n$ and $m$), and $\mu_0 = \frac{x_0^T z_0}{n}$. Then we have:
\begin{equation} \label{starting point}
Ax_0 = b + \bar{b},\ -Qx_0 + A^Ty_0 + z_0 = c + \bar{c},\  \zeta_0 = x_0,\ \lambda_0 = y_0.
\end{equation}
\noindent for some vectors $\bar{b},\ \bar{c}$.
\paragraph{Neighbourhood.}

\par We mentioned earlier that we develop a path-following method. Hence, we have to describe a neighbourhood in which the iterations of the method should lie. However, unlike typical path-following methods, we define a family of neighbourhoods that depend on the PMM sub-problem parameters. 
\par Given the starting point in \eqref{starting point}, penalty $\mu_k$, and estimates $\lambda_k, \zeta_k$, we define the following \textit{regularized set of centers}:
\begin{equation*}
\mathcal{P}_{\mu_k}(\zeta_k,\lambda_k) \coloneqq \large\{(x,y,z)\in \mathcal{C}_{\mu_k}(\zeta_k,\lambda_k)\ :\ (x,z) > (0,0),\ Xz = \mu_k e_n \large\},
\end{equation*}
\noindent where
\begin{equation*}
\mathcal{C}_{\mu_k}(\zeta_k,\lambda_k) \coloneqq \bigg\{(x,y,z)\ :\ \begin{matrix}
Ax + \mu_k(y-\lambda_k) = b + \frac{\mu_k}{\mu_0} \bar{b},\\
-Qx + A^Ty + z - \mu_k (x- \zeta_k) = c +  \frac{\mu_k}{\mu_0}\bar{c}
\end{matrix} \bigg\},
\end{equation*}
\noindent and $\bar{b},\ \bar{c}$ are as in \eqref{starting point}. The term set of centers originates from \cite{paper_43}, where a similar set is studied.
\par In order to enlarge the previous set, we define the following set:
\begin{equation*}
\tilde{\mathcal{C}}_{\mu_k}(\zeta_k,\lambda_k) \coloneqq \Bigg\{(x,y,z)\ :\ \begin{matrix}
Ax + \mu_k(y-\lambda_k) = b + \frac{\mu_k}{\mu_0} (\bar{b}+\tilde{b}_{k}),\\
-Qx + A^Ty + z - \mu_k (x- \zeta_k) = c +  \frac{\mu_k}{\mu_0}(\bar{c}+\tilde{c}_{k})\\
\|(\tilde{b}_{k},\tilde{c}_{k})\|_2 \leq C_N,\ \|(\tilde{b}_{k},\tilde{c}_{k})\|_{\mathcal{A}} \leq \gamma_{\mathcal{A}} \rho
\end{matrix} \Bigg\},
\end{equation*}
where $C_N > 0$ is a constant, $\gamma_{\mathcal{A}} \in (0,1)$, and $\rho>0$ is as defined in the starting point.  The vectors $\tilde{b}_{k}$ and $\tilde{c}_{k}$ represent the current scaled (by $\frac{\mu_0}{\mu_k}$) infeasibility and vary depending on the iteration $k$. In particular, these vectors can be formally defined recursively, depending on the iterations of IP-PMM. However, such a definition is not necessary for the developments to follow. In essence, the only requirement is that these scaled infeasibility vectors are bounded above by some constants, with respect to the 2-norm as well as the semi-norm defined in \eqref{semi-norm definition}. Using the latter set, we are now ready to define a family of neighbourhoods:
\begin{equation} \label{Small neighbourhood}
\mathcal{N}_{\mu_k}(\zeta_k,\lambda_k) \coloneqq \large\{(x,y,z) \in \tilde{\mathcal{C}}_{\mu_k}(\zeta_k,\lambda_k)\ :\ (x,z) >(0,0),\ x^i z^i \geq \gamma_{\mu} \mu_k,\ i \in \{1,\ldots,n\}\large\},
\end{equation}
\noindent where $\gamma_{\mu} \in (0,1)$ is a constant preventing component-wise complementarity products from approaching zero faster than $\mu_k = \frac{x_k^T z_k}{n}$. Obviously, the starting point defined in \eqref{starting point} belongs to the neighbourhood $\mathcal{N}_{\mu_0}(\zeta_0,\lambda_0)$, with $(\tilde{b}_{0},\tilde{c}_{0}) = (0,0)$.  Notice from the definition of the neighbourhood, that it depends on the choice of the constants $C_N$, $\gamma_{\mathcal{A}}$, $\gamma_{\mu}$. However, as the neighbourhood also depends on the parameters $\mu_k,\ \lambda_k,\ \zeta_k$, we omit the dependence on the constants, for simplicity of notation. 

\paragraph{Newton System.}
\par At every IP-PMM iteration, we approximately solve a perturbed form of the conditions in \eqref{Proximal IPM FOC}, by applying a variation of the Newton method. In particular, we form the Jacobian of the left-hand side of \eqref{Proximal IPM FOC} and we perturb the right-hand side of the Newton equation as follows: 
\begin{equation} \label{Newton System}
\begin{split}
	\begin{bmatrix} 
		-(Q + \mu_k I_n) &   A^T & I\\
		A  & \mu_k I_m & 0\\
		Z_k &  0  &X_k 
	\end{bmatrix}
	\begin{bmatrix}
		\Delta x_k\\
		\Delta y_k\\
		\Delta z_k
	\end{bmatrix}
	= \\ -
	\begin{bmatrix}
		- (c + \frac{\sigma_k \mu_k}{\mu_0}\bar{c}) - Qx_k  + A^T y_k + z_k -\sigma_k\mu_k (x_k - \zeta_k)\\
		Ax_k  +\sigma_k\mu_k (y_k - \lambda_k)- (b +\frac{\sigma_k \mu_k}{\mu_0}\bar{b}) \\
 		X_kZ_ke_n - \sigma_{k} \mu_k e_n
	\end{bmatrix},
	\end{split}
	\end{equation}
\noindent where $\bar{b},\ \bar{c}$ are as in \eqref{starting point}. Notice that we perturb the right-hand side of the Newton system in order to ensure that the iterates remain in the neighbourhood \eqref{Small neighbourhood}, while trying to reduce the value of the penalty (barrier) parameter $\mu_k$.
\par We are now able to derive Algorithm \ref{Algorithm PMM-IPM}, summarizing the proposed interior point-proximal method of multipliers. We will prove polynomial complexity of this scheme in the next section, under standard assumptions.
\renewcommand{\thealgorithm}{IP-PMM}

\begin{algorithm}[!ht]
\caption{Interior Point-Proximal Method of Multipliers}
    \label{Algorithm PMM-IPM}
    \textbf{Input:}  $A, Q, b, c$, $\text{tol}$.\\
    \textbf{Parameters:} $0< \sigma_{\min} \leq \sigma_{\max} \leq 0.5$, $C_N > 0$, $0<\gamma_{\mathcal{A}} < 1,\ 0<\gamma_{\mu} < 1$.\\
    \textbf{Starting point:} Set as in \eqref{starting point}.
\begin{algorithmic}[]
\For {($k= 0,1,2,\cdots$)}
\If {(($\|Ax_k - b\|_2 < \text{tol}) \wedge (\|c + Qx_k - A^T y_k - z_k\|_2 < \text{tol}) \wedge (\mu_k < \text{tol})$)}
	\State \Return $(x_k,y_k,z_k)$.	
\Else
	\State Choose $\sigma_k \in [\sigma_{\min},\sigma_{\max}]$ and solve \eqref{Newton System}.
	\State Choose step-length $\alpha_k$, as the largest $\alpha \in (0,1]$ such that  $\mu_k(\alpha)  \leq  \ (1-0.01 \alpha)\mu_k$ and:
	\begin{equation*}
	\begin{split}
	 (x_k + \alpha_k &\Delta x_k, y_k + \alpha_k \Delta y_k, z_k + \alpha_k \Delta z_k) \in  \ \mathcal{N}_{\mu_k(\alpha)}(\zeta_k,\lambda_k),\\ 
	\text{where, }\ \ & \mu_{k}(\alpha) = \frac{(x_k + \alpha_k \Delta x_k)^T(z_k + \alpha_k \Delta z_k)}{n}.
	\end{split}
	\end{equation*}	
	\State Set $(x_{k+1},y_{k+1},z_{k+1}) = (x_k + \alpha_k \Delta x_k, y_k + \alpha_k \Delta y_k, z_k + \alpha_k \Delta z_k)$, $\mu_{k+1} = \frac{x_{k+1}^Tz_{k+1}}{n}$.
	\State Let $r_p = Ax_{k+1} - (b + \frac{\mu_{k+1}}{\mu_0}\bar{b})$, $r_d = (c + \frac{\mu_{k+1}}{\mu_0}\bar{c}) + Qx_{k+1} - A^Ty_{k+1} -z_{k+1}.$
  	\If {\bigg(\big($\|(r_p,r_d)\|_2 \leq C_N \frac{\mu_{k+1}}{\mu_0} \big) \wedge \big(\|(r_p,r_d)\|_{\mathcal{A}} \leq \gamma_{\mathcal{A}}\rho \frac{\mu_{k+1}}{\mu_0}\big)$\bigg)}
  		\State $(\zeta_{k+1},\lambda_{k+1}) = (x_{k+1},y_{k+1})$.
  	\Else
  		\State $(\zeta_{k+1},\lambda_{k+1}) = (\zeta_{k},\lambda_{k})$.
  	\EndIf
\EndIf
\EndFor 
\end{algorithmic}
\end{algorithm}

\par Notice, in Algorithm \ref{Algorithm PMM-IPM}, that we force $\sigma$ to be less than $0.5$. This value is set, without loss of generality, for simplicity of exposition. Similarly, in the choice of the step-length, we require that $\mu_k(\alpha) \leq (1-0.01 \alpha)\mu_k$. The constant $0.01$ is chosen for ease of presentation. It depends on the choice of the maximum value of $\sigma$. The constants $C_N,\ \gamma_{\mathcal{A}},\ \gamma_{\mu}$, are used in the definition of the neighbourhood in \eqref{Small neighbourhood}. Their values can be considered to be arbitrary. The input $\text{tol}$, represents the error tolerance (chosen by the user). The terminating conditions require the Euclidean norm of primal and dual infeasibility, as well as complementarity, to be less than this tolerance. In such a case, we accept the iterate as a solution triple. The estimates $\lambda, \zeta$ are not updated if primal or dual feasibility are not both sufficiently decreased. In this case, we keep the estimates constant while continuing decreasing the penalty parameter $\mu_k$. Following the usual practice with proximal and augmented Lagrangian methods, we accept a new estimate when the respective residual is sufficiently decreased. However, the algorithm requires the evaluation of the semi-norm defined in \eqref{semi-norm definition}, at every iteration. While this is not practical, it can be achieved in polynomial time, with respect to the size of the problem. For a detailed discussion on this, the reader is referred to \cite[Section 4]{paper_43}.
 \section{Convergence Analysis of IP-PMM} \label{section Polynomial Convergence}
\par In this section we prove polynomial complexity and global convergence of Algorithm \ref{Algorithm PMM-IPM}. The proof follows by induction on the iterations of IP-PMM. That is, given an iterate $(x_k,y_k,z_k)$ at an arbitrary iteration $k$ of IP-PMM, we prove that the next iterate belongs to the appropriate neighbourhood required by the algorithm. In turn, this allows us to prove global and polynomial convergence of IP-PMM. An outline of the proof can be briefly explained as follows:
\begin{itemize}
\item[$\bullet$] Initially, we present some technical results in Lemmas \ref{Lemma non-expansiveness}-\ref{Lemma tilde point} which are required for the analysis throughout this section.
\item[$\bullet$] In turn, we prove boundedness of the iterates $(x_k,y_k,z_k)$ of IP-PMM in Lemma \ref{Lemma boundedness of x z}. In particular we show that $\|(x_k,y_k,z_k)\|_2 = O(n)$ and $\|(x_k,z_k)\|_1 = O(n)$ for every $k \geq 0$.
\item[$\bullet$] Then, we prove boundedness of the Newton direction computed at every IP-PMM iteration in Lemma \ref{Lemma boundedness Dx Dz}. More specifically, we prove that $\|(\Delta x_k,\Delta y_k,\Delta z_k)\|_2 = O(n^3)$ for every $k \geq 0$. 
\item[$\bullet$] In Lemma \ref{Lemma step-length} we prove the existence of a positive step-length $\bar{\alpha}$ so that the new iterate of IP-PMM, $(x_{k+1},y_{k+1},z_{k+1})$, belongs to the updated neighbourhood $\mathcal{N}_{\mu_{k+1}}(\zeta_{k+1},\lambda_{k+1})$, for every $k \geq 0$. In particular, we show that $\bar{\alpha} \geq \frac{\bar{\kappa}}{n^4}$, where $\bar{\kappa}$ is a constant independent of $n$ and $m$.
\item[$\bullet$] Q-linear convergence of the barrier parameter $\mu_k$ to zero is established in Theorem \ref{Theorem mu convergence}.
\item[$\bullet$] The polynomial complexity of IP-PMM is then proved in Theorem \ref{Theorem complexity}, showing that IP-PMM converges to an $\epsilon$-accurate solution in at most $O(n^4|\log\big(\frac{1}{\epsilon}\big)|)$ steps.
\item[$\bullet$] Finally, global converge to an optimal solution of \eqref{non-regularized primal}-\eqref{non-regularized dual} is established in Theorem \ref{Theorem convergence for the feasible case}. 
\end{itemize} 

\par For the rest of this section, we will make use of the following two assumptions, which are commonly employed when analyzing the complexity of an IPM. 
\begin{assumption} \label{Assumption 1}
There exists an optimal solution $(x^*,y^*,z^*)$ for the primal-dual pair \textnormal{(\ref{non-regularized primal})-(\ref{non-regularized dual})}, such that $\|x^*\|_{\infty} \leq C^*$, $\|y^*\|_{\infty} \leq C^*$ and $\|z^*\|_{\infty} \leq C^*$, for some constant $C^* \geq 0$, independent of $n$ and $m$.
\end{assumption}

\begin{assumption} \label{Assumption 2}
The constraint matrix of \textnormal{\eqref{non-regularized primal}} has full row rank, that is $\text{rank}(A) = m$. Furthermore, we assume that there exist constants $C_A^1 > 0$, $C_A^2 > 0$, $C_Q >0$ and $C_r > 0$, independent of $n$ and $m$, such that:
\begin{equation*}
\eta_{\min}(A) \geq C_A^1,\quad \eta_{\max}(A) \leq C_A^2,\quad \nu_{\max}(Q) \leq C_Q,\quad \|(c,b)\|_{\infty}\leq C_r.
\end{equation*}
\end{assumption}
\noindent Note that the independence of the previous constants from the problem dimensions is assumed for simplicity of exposition; this is a common practice when analyzing the complexity of interior point methods. If these constants depend polynomially on $n$ (or $m$), the analysis still holds by suitably altering the worst-case polynomial bound for the number of iterations of the algorithm. 
\par Let us now use the properties of the proximal operator defined in \eqref{Primal Dual Proximal Operator}. 

\begin{lemma} \label{Lemma non-expansiveness}
Given Assumption \textnormal{\ref{Assumption 1}}, and for all $\lambda \in \mathbb{R}^m$, $\zeta \in \mathbb{R}^n$ and $0 \leq \mu < \infty$, there exists  a unique pair $(x_r^*,y_r^*)$, such that $(x_r^*,y_r^*) = P(\zeta,\lambda),$ and
\begin{equation} \label{non-expansiveness property}
\|(x_r^*,y_r^*)-(x^*,y^*)\|_2 \leq \|(\zeta,\lambda)-(x^*,y^*)\|_2,
\end{equation}
\noindent where $P(\cdot)$ is defined as in \eqref{Primal Dual Proximal Operator} and $(x^*,y^*)$ is the same as in Assumption \textnormal{\ref{Assumption 1}}.
\end{lemma}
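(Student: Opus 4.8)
The plan is to derive the statement directly from two ingredients that are already available: first, maximal monotonicity of the operator $T_{\mathcal{L}}$ from \eqref{Primal Dual Maximal Monotone Operator}, which (via Minty's theorem) makes the proximal operator $P = (I_{(m+n)} + \frac{1}{\mu}T_{\mathcal{L}})^{-1}$ of \eqref{Primal Dual Proximal Operator} single-valued, everywhere defined, and non-expansive, as recalled right after \eqref{Primal Dual Proximal Operator}; second, the fact that an optimal triple of \eqref{non-regularized primal}-\eqref{non-regularized dual} produces a zero of $T_{\mathcal{L}}$, hence a fixed point of $P$. Existence and uniqueness of the pair $(x_r^*,y_r^*)$ with $(x_r^*,y_r^*) = P(\zeta,\lambda)$ are then just single-valuedness and everywhere-definedness of $P$; the inequality \eqref{non-expansiveness property} is immediate from non-expansiveness once the fixed-point property $(x^*,y^*) = P(x^*,y^*)$ is established.

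Concretely, I would proceed in four steps. \emph{(i)} Invoke maximal monotonicity of $T_{\mathcal{L}}$ to conclude that $P$ is well defined and single-valued on all of $\mathbb{R}^{m+n}$, which settles the existence/uniqueness part. \emph{(ii)} Show that $(0,0) \in T_{\mathcal{L}}(x^*,y^*)$. Using Assumption \ref{Assumption 1}, pick the bounded optimal solution $(x^*,y^*,z^*)$; from the KKT system \eqref{non-regularized FOC} one has $Ax^* - b = 0$, $c + Qx^* - A^Ty^* - z^* = 0$, $X^*Z^*e_n = 0$ and $(x^*,z^*) \geq (0,0)$. The first identity matches the $u$-component in the definition of $T_{\mathcal{L}}$. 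For the $v$-component, write $-z^* = A^Ty^* - Qx^* - c$; complementarity together with $z^* \geq 0$ gives $z^{*j}=0$ when $x^{*j}>0$ and $z^{*j} \geq 0$ when $x^{*j}=0$, hence $-z^* \in \partial\delta_+(x^*)$ by the characterization of $\partial\delta_+$ recalled before the statement, so that $0 \in Qx^* + c - A^Ty^* + \partial\delta_+(x^*)$; this is exactly $(0,0) \in T_{\mathcal{L}}(x^*,y^*)$, mirroring the remark after \eqref{Primal Dual Proximal Operator} that a zero of $T_{\mathcal{L}}$ corresponds to an optimal triple. \emph{(iii)} From $0 \in \frac{1}{\mu}T_{\mathcal{L}}(x^*,y^*)$ deduce $(x^*,y^*) \in (I_{(m+n)} + \frac{1}{\mu}T_{\mathcal{L}})(x^*,y^*)$, hence $(x^*,y^*) = P(x^*,y^*)$. \emph{(iv)} Apply non-expansiveness of $P$ to get $\|(x_r^*,y_r^*) - (x^*,y^*)\|_2 = \|P(\zeta,\lambda) - P(x^*,y^*)\|_2 \leq \|(\zeta,\lambda) - (x^*,y^*)\|_2$, which is \eqref{non-expansiveness property}.

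For the boundary case $\mu = 0$, I would interpret $P$ as the limiting resolvent $\lim_{\lambda\to\infty}(I_{(m+n)} + \lambda T_{\mathcal{L}})^{-1}$, which coincides with the metric projection onto the (closed, convex) set $T_{\mathcal{L}}^{-1}(0)$ of primal-dual optimal pairs; by step (ii) this set contains $(x^*,y^*)$, so it is still a fixed point and projections are non-expansive, and the argument carries over verbatim. I do not expect a serious obstacle here: the only point requiring genuine care is the sign bookkeeping in step (ii) between the dual slack $z^*$ of \eqref{non-regularized dual} (which is nonnegative) and the subgradient $-z^* \in \partial\delta_+(x^*)$ entering $T_{\mathcal{L}}$; everything else is a direct application of standard properties of resolvents of maximal monotone operators, all of which are already cited in the excerpt.
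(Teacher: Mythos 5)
Your proposal is correct and follows essentially the same route as the paper: single-valuedness and non-expansiveness of the resolvent $P$ (cited from Rockafellar), the observation that the KKT triple of Assumption \ref{Assumption 1} gives $(0,0)\in T_{\mathcal{L}}(x^*,y^*)$ and hence $P(x^*,y^*)=(x^*,y^*)$, and then non-expansiveness to obtain \eqref{non-expansiveness property}. Your explicit verification that $-z^*\in\partial\delta_+(x^*)$ and your limiting-projection treatment of the boundary case $\mu=0$ merely spell out details the paper leaves implicit.
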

\begin{proof}
\par We know that $P(\cdot,\cdot)$ is single-valued and non-expansive (\cite{paper_39}), and hence there exists a unique pair $(x_r^*,y_r^*)$, such that $(x_r^*,y_r^*) = P(\zeta,\lambda),$ for all $\lambda,\ \zeta$ and $0 \leq \mu < \infty$. Given the optimal triple of Assumption \ref{Assumption 1}, we can use the non-expansiveness of $P(\cdot)$ in \eqref{Primal Dual Proximal Operator}, to show that:
\begin{equation*} 
\|P_k(\zeta,\lambda) - P_k(x^*,y^*)\|_2 = \|(x_r^*,y_r^*)-(x^*,y^*)\|_2 \leq \|(\zeta,\lambda)-(x^*,y^*)\|_2,
\end{equation*} 
\noindent where we used the fact that $P(x^*,y^*) = (x^*,y^*)$, which follows directly from \eqref{Primal Dual Maximal Monotone Operator}, as we can see that $(0,0) \in T_{\mathcal{L}}(x^*,y^*)$. This completes the proof. 
\end{proof}
\begin{lemma} \label{Lemma-boundedness of optimal solutions for sub-problems}
Given Assumptions \textnormal{\ref{Assumption 1}, \ref{Assumption 2}}, there exists a triple $(x_{r_k}^*,y_{r_k}^*,z_{r_k}^*)$, satisfying: 
\begin{equation} \label{PMM optimal solution}
\begin{split}
A x_{r_k}^* + \mu (y_{r_k}^*-\lambda_k) -b & =   0\\
-c-Qx_{r_k}^* + A^T y_{r_k}^* + z_{r_k}^* - \mu (x_{r_k}^* - \zeta_k)& =  0,\\
(x_{r_k}^*)^T(z_{r_k}^*) & = 0,
\end{split}
\end{equation}
\noindent with $\|(x_{r_k}^*,y_{r_k}^*,z_{r_k}^*)\|_2 = O(\sqrt{n})$, for all $\lambda_k \in \mathbb{R}^m$, $\zeta_k \in \mathbb{R}^n$ produced by Algorithm \textnormal{\ref{Algorithm PMM-IPM}}, and any $\mu \in [0,\infty)$. Moreover, we have that $\|(\zeta_k,\lambda_k)\|_2 = O(\sqrt{n})$, for all $k \geq 0$.
\end{lemma}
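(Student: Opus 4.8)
My plan is to establish, in order: (a) existence of a triple satisfying \eqref{PMM optimal solution}; (b) the bound $\|(x_{r_k}^*,y_{r_k}^*)\|_2 = O(\sqrt{n})$; (c) the bound $\|z_{r_k}^*\|_2 = O(\sqrt{n})$; and (d) the auxiliary bound $\|(\zeta_k,\lambda_k)\|_2 = O(\sqrt{n})$, on which (b) and (c) both rest. Existence is immediate for $\mu>0$: Lemma \ref{Lemma non-expansiveness} says the operator $P$ in \eqref{Primal Dual Proximal Operator} is single-valued, so $(x_{r_k}^*,y_{r_k}^*):=P(\zeta_k,\lambda_k)$ is well defined; unwinding \eqref{Primal Dual Maximal Monotone Operator}--\eqref{Primal Dual Proximal Operator} this means $\mu(\lambda_k-y_{r_k}^*)=Ax_{r_k}^*-b$ and $\mu(\zeta_k-x_{r_k}^*)\in Qx_{r_k}^*+c-A^Ty_{r_k}^*+\partial\delta_+(x_{r_k}^*)$, and taking $z_{r_k}^*$ to be minus the selected subgradient element of $\partial\delta_+(x_{r_k}^*)$ produces a vector with $z_{r_k}^*\geq 0$, $(x_{r_k}^*)^Tz_{r_k}^*=0$, and exactly the system \eqref{PMM optimal solution}. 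For $\mu=0$ the system \eqref{PMM optimal solution} collapses to the optimality conditions \eqref{non-regularized FOC} of the original pair, for which Assumption \ref{Assumption 1} supplies a triple of norm $O(\sqrt{n})$. Claim (b) then follows directly from the non-expansiveness inequality \eqref{non-expansiveness property}: $\|(x_{r_k}^*,y_{r_k}^*)\|_2\leq \|(x^*,y^*)\|_2+\|(\zeta_k,\lambda_k)-(x^*,y^*)\|_2\leq 2\|(x^*,y^*)\|_2+\|(\zeta_k,\lambda_k)\|_2$, and $\|(x^*,y^*)\|_2\leq C^*\sqrt{n+m}=O(\sqrt{n})$ by Assumption \ref{Assumption 1} together with $m\leq n$, so (b) reduces to (d).

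For (c) I would eliminate $z_{r_k}^*$ from the second equation of \eqref{PMM optimal solution}, writing $z_{r_k}^*=Qx_{r_k}^*+c-A^Ty_{r_k}^*+\mu(x_{r_k}^*-\zeta_k)$, and restrict to the index set $\mathcal{Z}=\{j:(z_{r_k}^*)^j>0\}$. By complementarity $(x_{r_k}^*)^j=0$ for $j\in\mathcal{Z}$, and since $\zeta_k>0$ (it is $x_0=\rho e_n$ or a strictly positive IP-PMM iterate) the vector $\mu\zeta_k^{\mathcal{Z}}$ is componentwise nonnegative, so on $\mathcal{Z}$ one has $0<z_{r_k}^*=(Qx_{r_k}^*+c-A^Ty_{r_k}^*)^{\mathcal{Z}}-\mu\zeta_k^{\mathcal{Z}}\leq (Qx_{r_k}^*+c-A^Ty_{r_k}^*)^{\mathcal{Z}}$ componentwise. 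Since $z_{r_k}^*$ vanishes off $\mathcal{Z}$, this gives $\|z_{r_k}^*\|_2\leq \|Qx_{r_k}^*+c-A^Ty_{r_k}^*\|_2\leq \nu_{\max}(Q)\|x_{r_k}^*\|_2+\|c\|_2+\eta_{\max}(A)\|y_{r_k}^*\|_2=O(\sqrt{n})$ by Assumption \ref{Assumption 2} and (b), and I note that this bound is uniform in $\mu$.

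It remains to prove (d) by induction on $k$. For $k=0$, $\zeta_0=\rho e_n$ and $\lambda_0=y_0$ with $\|y_0\|_\infty=O(1)$, hence $\|(\zeta_0,\lambda_0)\|_2\leq \rho\sqrt{n}+O(\sqrt{m})=O(\sqrt{n})$. For the inductive step, if the estimates are not updated then $(\zeta_{k+1},\lambda_{k+1})=(\zeta_k,\lambda_k)$ and nothing is needed; if they are updated then $(\zeta_{k+1},\lambda_{k+1})=(x_{k+1},y_{k+1})$, where $(x_{k+1},y_{k+1},z_{k+1})\in\mathcal{N}_{\mu_{k+1}}(\zeta_k,\lambda_k)$ and in addition $\|(r_p,r_d)\|_2\leq C_N\frac{\mu_{k+1}}{\mu_0}$. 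Membership in $\mathcal{N}_{\mu_{k+1}}(\zeta_k,\lambda_k)$ means the iterate satisfies the equations defining $\tilde{\mathcal{C}}_{\mu_{k+1}}(\zeta_k,\lambda_k)$ for some $(\tilde{b},\tilde{c})$ with $\|(\tilde{b},\tilde{c})\|_2\leq C_N$; subtracting the homotopy targets $b+\frac{\mu_{k+1}}{\mu_0}\bar{b}$ and $c+\frac{\mu_{k+1}}{\mu_0}\bar{c}$ from those equations yields $r_p=\frac{\mu_{k+1}}{\mu_0}\tilde{b}-\mu_{k+1}(y_{k+1}-\lambda_k)$ and $r_d=-\frac{\mu_{k+1}}{\mu_0}\tilde{c}-\mu_{k+1}(x_{k+1}-\zeta_k)$. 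Consequently $\mu_{k+1}\|y_{k+1}-\lambda_k\|_2\leq \|r_p\|_2+\frac{\mu_{k+1}}{\mu_0}\|\tilde{b}\|_2\leq \frac{2C_N}{\mu_0}\mu_{k+1}$ and likewise $\mu_{k+1}\|x_{k+1}-\zeta_k\|_2\leq \frac{2C_N}{\mu_0}\mu_{k+1}$, so both $\|x_{k+1}-\zeta_k\|_2$ and $\|y_{k+1}-\lambda_k\|_2$ are at most the constant $\frac{2C_N}{\mu_0}$; combined with the induction hypothesis this gives $\|(\zeta_{k+1},\lambda_{k+1})\|_2\leq\|(\zeta_k,\lambda_k)\|_2+O(1)=O(\sqrt{n})$.

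The main obstacle lies in step (d). The cancellation that converts $r_p,r_d$ into $\mu_{k+1}$ times the displacement of $(x_{k+1},y_{k+1})$ from $(\zeta_k,\lambda_k)$ is what makes each estimate update move the estimates by a genuine constant (rather than by something growing with $n$), and this is the non-obvious algebraic point. One must then guarantee that the implied $O(\sqrt n)$ bound is uniform in $k$, i.e., that the estimates cannot drift without bound across arbitrarily many updates; for this I would invoke the non-expansiveness of the proximal operator from Lemma \ref{Lemma non-expansiveness}, applied along the subsequence of iterations at which the estimates change and using that an IP-PMM iterate which triggers an update is close to the exact proximal image $P(\zeta_k,\lambda_k)$, so that the successive constant-size increments telescope against a contraction rather than accumulating. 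Everything else --- existence, claims (b) and (c), and the base case --- is routine manipulation of the definitions together with Assumptions \ref{Assumption 1}--\ref{Assumption 2}.
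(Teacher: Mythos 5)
Your proposal follows essentially the same route as the paper: induction on $k$ with the same two-case analysis, the same use of the update test $\|(r_p,r_d)\|_2\leq C_N\frac{\mu_{k+1}}{\mu_0}$ together with the neighbourhood equations to bound the displacement $\|(x_{k+1}-\zeta_k,\,y_{k+1}-\lambda_k)\|_2$ by the constant $\frac{2C_N}{\mu_0}$, the same appeal to the non-expansiveness of $P$ (Lemma \ref{Lemma non-expansiveness}) for $(x_{r_k}^*,y_{r_k}^*)$, and the same recovery of $z_{r_k}^*$ from $\partial\delta_+(x_{r_k}^*)$; your componentwise bound for $z_{r_k}^*$ (dropping $-\mu\zeta_k\leq 0$ on the support of $z_{r_k}^*$) is a minor refinement that makes the bound visibly uniform in $\mu$, whereas the paper simply invokes Assumption \ref{Assumption 2}. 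The concern in your final paragraph is the one place you go beyond the paper: the paper's proof is exactly your first three paragraphs and stops at the per-update $O(1)$ increment inside the induction, without any telescoping argument guarding against accumulation over many updates, so no such mechanism is needed to reproduce its reasoning; note also that the patch you sketch would not go through as stated, since $P$ is only non-expansive, not a contraction, and the closeness of an updating iterate to the exact proximal image $P(\zeta_k,\lambda_k)$ is nowhere quantified.
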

\begin{proof}
\par We prove the claim by induction on the iterates, $k \geq 0$, of Algorithm \ref{Algorithm PMM-IPM}. At iteration $k = 0$, we have that $\lambda_0 = y_0$ and $\zeta_0 = x_0$. But from the construction of the starting point in \eqref{starting point}, we know that $\|(x_0,y_0)\|_2 = O(\sqrt{n})$. Hence, $\|(\zeta_0,\lambda_0)\|_2 = O(\sqrt{n})$ (assuming that $n > m$). From Lemma \ref{Lemma non-expansiveness}, we know that there exists a unique pair $(x_{r_0}^*,y_{r_0}^*)$ such that:
$$(x_{r_0}^*,y_{r_0}^*) = P_0(\zeta_0,\lambda_0),\quad \textnormal{and} \quad \|(x_{r_0}^*,y_{r_0}^*) - (x^*,y^*)\|_2 \leq \|(\zeta_0,\lambda_0)-(x^*,y^*)\|_2.$$
\noindent Using the triangular inequality, and combining the latter inequality with our previous observations, as well as Assumption \ref{Assumption 1}, yields that $\|(x_{r_0}^*,y_{r_0}^*)\|_2 = O(\sqrt{n})$. From the definition of the operator in \eqref{PMM Operator Subproblem}, we know that: 
\begin{equation*}
\begin{split}
 -c - Qx_{r_0}^* + A^T y_{r_0}^* - \mu (x_{r_0}^* - \zeta_k) &\ \in \partial \delta_+(x_{r_0}^*),\\
 Ax_{r_0}^* + \mu (y_{r_0}^*-\lambda_k) - b &\ = 0,
\end{split}
\end{equation*}
\noindent where $\partial(\delta_+(\cdot))$ is the sub-differential of the indicator function defined in \eqref{Indicator function}. Hence, we know that there must exist $-z_{r_0}^* \in \partial \delta_+(x_{r_0}^*)$ (and hence, $z_{r_0}^* \geq 0$, $(x_{r_0}^*)^T(z_{r_0}^*) = 0$), such that:
$$z_{r_0}^* = c + Qx_{r_0}^* - A^T y_{r_0}^* + \mu (x_{r_0}^* - \zeta_k),\ (x^*_{r_0})^T(z^*_{r_0}) = 0,\ \|z_{r_0}^*\|_2 = O(\sqrt{n}),$$
\noindent where $\|z_{r_0}^*\|_2 = O(\sqrt{n})$ follows from Assumption \ref{Assumption 2}, combined with $\|(x_0,y_0)\|_2 = O(\sqrt{n})$.

\par Let us now assume that at some iteration $k$ of Algorithm \ref{Algorithm PMM-IPM}, we have $\|(\zeta_k,\lambda_k)\|_2 = O(\sqrt{n})$. There are two cases for the subsequent iteration:
\begin{itemize}
\item[\textbf{1.}] The proximal estimates are updated, that is $(\zeta_{k+1},\lambda_{k+1}) = (x_{k+1},y_{k+1})$, or
\item[\textbf{2.}] the proximal estimates stay the same, i.e. $(\zeta_{k+1},\lambda_{k+1}) = (\zeta_k,\lambda_k)$.
\end{itemize}
\par \textbf{Case 1.} We know by construction that this case can only occur if the following condition is satisfied:
$$\|(r_p,r_d)\|_2 \leq C_N \frac{\mu_{k+1}}{\mu_0},$$
\noindent where $r_p,\ r_d$ are defined in Algorithm \ref{Algorithm PMM-IPM}. However, from the neighbourhood conditions in \eqref{Small neighbourhood}, we know that:
$$\|\big(r_p + \mu_{k+1}(y_{k+1}-\lambda_k), r_d + \mu_{k+1}(x_{k+1}-\zeta_k)\big)\|_2 \leq C_N \frac{\mu_{k+1}}{\mu_0}.$$
\noindent Combining the last two inequalities by applying the triangular inequality, and using the inductive hypothesis ($\|(\lambda_k,\zeta_k)\|_2 = O(\sqrt{n})$), yields that $\|(x_{k+1},y_{k+1})\|_2 = O(\sqrt{n})$. Hence, $\|(\zeta_{k+1},\lambda_{k+1})\|_2 = O(\sqrt{n})$. Then, we can invoke Lemma \ref{Lemma non-expansiveness}, with $\lambda = \lambda_{k+1}$, $\zeta = \zeta_{k+1}$ and $\mu = \mu_{k+1}$, which gives that:
$$\|(x_{r_{k+1}}^*,y_{r_{k+1}}^*) - (x^*,y^*)\|_2 \leq \|(\zeta_{k+1},\lambda_{k+1})-(x^*,y^*)\|_2.$$
\noindent A simple manipulation yields that $\|(x_{r_{k+1}}^*,y_{r_{k+1}}^*)\|_2 = O(\sqrt{n})$. As before, we use \eqref{PMM Operator Subproblem} alongside Assumption \ref{Assumption 2} to show the existence of $-z_{r_{k+1}}^* \in \partial \delta_+(x_{r_{k+1}}^*)$, such that  the triple $(x_{r_{k+1}}^*,y_{r_{k+1}}^*,z_{r_{k+1}}^*)$ satisfies \eqref{PMM optimal solution} with $\|z_{r_{k+1}}^*\|_2 = O(\sqrt{n})$.

\par \textbf{Case 2.} In this case, we have $(\zeta_{k+1},\lambda_{k+1}) = (\zeta_k,\lambda_k)$, and hence the inductive hypothesis gives us directly that: $\|(\zeta_{k+1},\lambda_{k+1})\|_2 = O(\sqrt{n})$. The same reasoning as before implies the existence of a triple $(x_{r_{k+1}}^*,y_{r_{k+1}}^*,z_{r_{k+1}}^*)$ satisfying \eqref{PMM optimal solution}, with $\|(x_{r_{k+1}}^*,y_{r_{k+1}}^*,z_{r_{k+1}}^*)\|_2 = O(\sqrt{n})$. 
\end{proof}

\begin{lemma} \label{Lemma tilde point}
Given Assumptions \textnormal{\ref{Assumption 1}, \ref{Assumption 2}}, $\lambda_k$ and $\zeta_k$ produced at an arbitrary iteration $k \geq 0$ of Algorithm \textnormal{\ref{Algorithm PMM-IPM}} and any $\mu \in [0,\infty)$, there exists a triple $(\tilde{x},\tilde{y},\tilde{z})$ which satisfies the following system of equations:
\begin{equation}  \label{tilde point conditions}
\begin{split}
A \tilde{x} + \mu \tilde{y} & =   b + \bar{b} + \mu \lambda_k + \tilde{b}_k,\\
-Q\tilde{x} + A^T \tilde{y} + \tilde{z} - \mu \tilde{x} & =  c + \bar{c} - \mu \zeta_k + \tilde{c}_k,\\
\tilde{X}\tilde{z} & = \theta e_n,
\end{split}
\end{equation}
\noindent for some arbitrary $\theta > 0$, with $(\tilde{x},\tilde{z}) > (0,0)$ and $\|(\tilde{x},\tilde{y},\tilde{z})\|_2 = O(\sqrt{n})$, where $\tilde{b}_{k},\ \tilde{c}_{k}$ are defined in \eqref{Small neighbourhood}, while $\bar{b},\ \bar{c}$ are defined with the starting point in \eqref{starting point}.
\end{lemma}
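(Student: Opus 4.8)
The plan is to construct the triple $(\tilde{x},\tilde{y},\tilde{z})$ explicitly by perturbing the optimal PMM solution $(x_{r_k}^*,y_{r_k}^*,z_{r_k}^*)$ from Lemma~\ref{Lemma-boundedness of optimal solutions for sub-problems}, which already satisfies the equalities in \eqref{PMM optimal solution} with residual vectors $\bar{b},\bar{c},\tilde{b}_k,\tilde{c}_k$ absent, and whose norm is $O(\sqrt{n})$. The difficulty is that $(x_{r_k}^*,z_{r_k}^*)$ satisfies complementarity $(x_{r_k}^*)^T z_{r_k}^* = 0$ with components that may lie on the boundary, whereas we need a \emph{strictly} interior point with $\tilde{X}\tilde{z} = \theta e_n$. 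So the construction will proceed in two stages: first adjust the right-hand sides to account for $\bar{b},\bar{c},\mu\lambda_k,\mu\zeta_k,\tilde{b}_k,\tilde{c}_k$, then push the point into the interior while controlling how the equalities change.

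First I would recall that the starting point satisfies $Ax_0 + \mu_0(y_0-\lambda_0) = b+\bar b$ (vacuously, since $\lambda_0=y_0$) and $-Qx_0+A^Ty_0+z_0-\mu_0(x_0-\zeta_0)=c+\bar c$ (again using $\zeta_0=x_0$), so in fact $Ax_0 = b+\bar b$ and $-Qx_0+A^Ty_0+z_0 = c+\bar c$ with $(x_0,z_0)=\rho(e_n,e_n)$ and $\mu_0 = \rho^2$. The natural candidate is therefore a convex-type combination of $(x_{r_k}^*,y_{r_k}^*,z_{r_k}^*)$ and the starting point: set $\tilde{x} = x_{r_k}^* + x_0$ (or a scaled version thereof), $\tilde{y}$ and $\tilde{z}$ analogously, and then check which right-hand side the equalities in \eqref{tilde point conditions} produce. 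Using linearity of the two equality blocks, the residuals add: the first equation gives $A(x_{r_k}^*+x_0) + \mu(y_{r_k}^*+y_0) = b + \bar b + \mu\lambda_k + (\text{correction terms involving } \mu(y_0) \text{ and the } \mu\text{-dependence of } x_{r_k}^*)$. One has to be careful that $(x_{r_k}^*,y_{r_k}^*,z_{r_k}^*)$ in Lemma~\ref{Lemma-boundedness of optimal solutions for sub-problems} is defined for a \emph{generic} $\mu$, so I would fix $\mu$ first, invoke that lemma with this $\mu$, and then absorb any leftover $O(\sqrt{n})$ discrepancy into the definition of $\tilde{b}_k,\tilde{c}_k$ — but since $\tilde b_k,\tilde c_k$ are already pinned by the neighbourhood definition, it is cleaner to instead \emph{solve} for $(\tilde x,\tilde y,\tilde z)$ directly and merely verify the norm bound. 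Concretely, once $\tilde x$ with $(\tilde x > 0)$ is chosen of size $O(\sqrt n)$, the first equation uniquely determines $\tilde y$ up to the kernel of $[A \mid \mu I_m]$ — and for $\mu>0$ this matrix has full row rank with smallest singular value $\geq \min\{C_A^1,\mu\}$ bounded below, while for $\mu = 0$ Assumption~\ref{Assumption 2} gives $\mathrm{rank}(A)=m$; either way $\|\tilde y\|_2 = O(\sqrt n)$ follows from $\|(\tilde x,\lambda_k,\bar b,\tilde b_k)\|_2 = O(\sqrt n)$ (using Lemma~\ref{Lemma-boundedness of optimal solutions for sub-problems} for $\lambda_k$, the starting-point construction for $\bar b$, and $\|\tilde b_k\|_2 \leq C_N$). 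The second equation then determines $\tilde z$ via $\tilde z = c + \bar c - \mu\zeta_k + \tilde c_k + Q\tilde x - A^T\tilde y + \mu\tilde x$, giving $\|\tilde z\|_2 = O(\sqrt n)$ by the same token. The remaining freedom is to choose $\tilde x$ strictly positive, which fixes $\tilde z$; then $\theta$ is not free but rather must be allowed to vary \emph{componentwise} — here I suspect the statement's ``$\tilde X\tilde z = \theta e_n$ for some arbitrary $\theta>0$'' is achievable only after a further rescaling.

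The cleanest route, and the one I would ultimately take, is: pick $\tilde x = \beta e_n$ and $\tilde z = \beta e_n$ for a single scalar $\beta>0$ (so automatically $\tilde X\tilde z = \beta^2 e_n =: \theta e_n$), then \emph{solve the two linear equations for $\tilde y$ and for a correction}. But $\tilde x$ and $\tilde z$ cannot both be prescribed while also satisfying the second equation unless we introduce a free term — so instead I would take $\tilde x$ proportional to $e_n$, let the first equation fix $\tilde y$, let the second equation fix $\tilde z$, and then observe that \emph{$\tilde z$ need not equal a multiple of $e_n$}; to then also get $\tilde X\tilde z = \theta e_n$, note the lemma only demands existence of \emph{some} $(\tilde x,\tilde y,\tilde z)$ satisfying all three, so in fact one should run the argument in reverse: take any $\theta>0$, set $\tilde z$ to make $\tilde X \tilde z = \theta e_n$ after $\tilde x$ is chosen, i.e. $\tilde z^i = \theta/\tilde x^i$; but this overdetermines the system. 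The resolution, which mirrors the standard ``there exists an interior point of the central-path-type set'' argument in infeasible IPM analysis (cf. \cite{paper_43}), is to note that the equality constraints in \eqref{tilde point conditions} involve $\bar b,\bar c,\tilde b_k,\tilde c_k$ which themselves are \emph{defined} to be the residuals that make a chosen interior point feasible — so the honest statement is that \emph{given} the freedom in $\tilde b_k,\tilde c_k$ (only their norms are constrained), one chooses $(\tilde x,\tilde z)=\Theta(\sqrt n)(e_n,e_n)$-type interior point, sets $\tilde y$ from the first equation, and \emph{defines} the leftover as part of $\tilde c_k$; the norm bounds $\|\tilde b_k\|_2\le C_N$, $\|\tilde b_k\|_{\mathcal A}\le \gamma_{\mathcal A}\rho$ then need checking using the semi-norm's subadditivity and the $O(\sqrt n)$ size of everything. \textbf{The main obstacle} is precisely this reconciliation between the complementarity requirement $\tilde X\tilde z=\theta e_n$ and the two affine equations with their prescribed residual structure: making sure the chosen strictly-positive $(\tilde x,\tilde z)$ is simultaneously consistent with affine systems whose right-hand sides are only partially free, and verifying that all the resulting vectors stay $O(\sqrt n)$ uniformly in $k$ and $\mu\in[0,\infty)$ (the $\mu\to\infty$ limit is where one must be careful that the $\mu\tilde x$, $\mu\tilde y$, $\mu\zeta_k$, $\mu\lambda_k$ terms do not blow up — they appear on both sides and should cancel by the choice tying $\tilde x,\tilde y$ to $\zeta_k,\lambda_k$). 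I expect the write-up to choose $\tilde x = x_{r_k}^* + \rho e_n$ (shifting the boundary-complementary optimal solution into the interior by $\rho e_n$), $\tilde y = y_{r_k}^*$, and $\tilde z$ = whatever the second equation forces, then absorb the $\Theta(\sqrt n)$ discrepancy from $\tilde X\tilde z \neq \theta e_n$ into a rescaling of $\theta$, and finally bound norms via Lemmas~\ref{Lemma non-expansiveness} and~\ref{Lemma-boundedness of optimal solutions for sub-problems} together with Assumption~\ref{Assumption 2}.
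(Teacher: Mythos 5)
There is a genuine gap. Your proposal correctly identifies the main obstacle (reconciling $\tilde{X}\tilde{z}=\theta e_n$ with the two affine equations whose right-hand sides are already fixed), but none of your candidate resolutions actually overcomes it, and the fallback you settle on is not admissible: the vectors $\tilde{b}_k,\tilde{c}_k$ are \emph{not} free in this lemma --- they are the specific scaled-infeasibility vectors attached to the current iterate by the neighbourhood \eqref{Small neighbourhood}, and the later use of this lemma (in Lemma \ref{Lemma boundedness of x z}) relies on the auxiliary triple having \emph{exactly the same} residuals as $(x_k,y_k,z_k)$ so that they cancel. Redefining ``the leftover as part of $\tilde{c}_k$'' therefore proves a different statement. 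Likewise, your guessed construction $\tilde{x}=x_{r_k}^*+\rho e_n$, $\tilde{y}=y_{r_k}^*$, $\tilde{z}$ forced by the second equation cannot work: the resulting product $\tilde{X}\tilde{z}$ is in general not a constant vector, so no rescaling of the scalar $\theta$ repairs the third equation, and the first equation picks up the wrong residual ($\rho A e_n = b+\bar{b}$, not $\bar{b}+\tilde{b}_k$). The overdetermination you noticed is real: \eqref{tilde point conditions} is a \emph{nonlinear} system, and its solvability cannot be obtained by prescribing $\tilde{x}$ (or $\tilde{z}$) and solving linear equations.

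The missing idea in the paper is variational rather than constructive: \eqref{tilde point conditions} is recognized as the set of optimality conditions of one PMM step (with estimates $\zeta_k,\lambda_k$ and penalty $\mu$) applied to the \emph{perturbed barrier pair} with data $b+\bar{b}+\tilde{b}_k$, $c+\bar{c}+\tilde{c}_k$ and barrier parameter $\theta$. Existence and strict positivity of $(\tilde{x},\tilde{z})$ then follow because this sub-problem is strictly convex with a logarithmic barrier, i.e. $(\tilde{x},\tilde{y})=\tilde{P}(\zeta_k,\lambda_k)$ for a single-valued, non-expansive proximal operator $\tilde{P}$, and $\tilde{z}=\theta\tilde{X}^{-1}e_n$. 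The $O(\sqrt{n})$ bound is then obtained from non-expansiveness of $\tilde{P}$ around a bounded solution $(x_s^*,y_s^*,z_s^*)$ of the perturbed barrier pair, combined with $\|(\zeta_k,\lambda_k)\|_2=O(\sqrt{n})$ from Lemma \ref{Lemma-boundedness of optimal solutions for sub-problems}; and the boundedness of $(x_s^*,y_s^*,z_s^*)$ is exactly where the semi-norm condition $\|(\tilde{b}_k,\tilde{c}_k)\|_{\mathcal{A}}\leq\gamma_{\mathcal{A}}\rho$ enters, via the strictly feasible point $(x_0,y_0,z_0)+(\hat{x},\hat{y},\hat{z})$ with $(\bar{x},\bar{z})\geq(1-\gamma_{\mathcal{A}})\rho(e_n,e_n)$, where $(\hat{x},\hat{y},\hat{z})$ attains the semi-norm of $(\tilde{b}_k,\tilde{c}_k)$. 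Your write-up gestures at the starting point and at the semi-norm, but without the proximal-operator existence argument and this feasibility construction the proof does not go through.
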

\begin{proof}
\par Let $k \geq 0$ denote an arbitrary iteration of Algorithm \ref{Algorithm PMM-IPM}. Let also $\bar{b},\ \bar{c}$ as defined in \eqref{starting point}, and $\tilde{b}_{k},\ \tilde{c}_{k}$, as defined in the neighbourhood conditions in \eqref{Small neighbourhood}. Given an arbitrary positive constant $\theta > 0$, we consider the following barrier primal-dual pair:
\begin{equation} \label{tilde non-regularized primal} 
\text{min}_{x} \ \big( (c+\bar{c} + \tilde{c}_k)^Tx + \frac{1}{2}x^T Q x -\theta \sum_{j=1}^n \ln x^j \big), \ \ \text{s.t.}  \  Ax = b + \bar{b} + \tilde{b}_k,   
\end{equation}
\begin{equation} \label{tilde non-regularized dual} 
\text{max}_{x,y,z}  \ \big((b + \bar{b} + \tilde{b}_k)^Ty - \frac{1}{2}x^T Q x+\theta \sum_{j=1}^n \ln z^j\big), \ \ \text{s.t.}\  -Qx + A^Ty + z = c+\bar{c} + \tilde{c}_k.
\end{equation}
\noindent Let us now define the following triple:
\begin{equation*}
(\hat{x},\hat{y},\hat{z}) \coloneqq \arg \min_{(x,y,z)} \big\{\|(x,z)\|_2: Ax = \tilde{b}_k,\ -Qx + A^T y + z = \tilde{c}_k \}. 
\end{equation*}
\noindent From the neighbourhood conditions \eqref{Small neighbourhood}, we know that $\|(\tilde{b}_k,\tilde{c}_k)\|_{\mathcal{A}} \leq \gamma_{\mathcal{A}}\rho$, and from the definition of the semi-norm in \eqref{semi-norm definition}, we have that: $\|(\hat{x},\hat{z})\|_2 \leq \gamma_{\mathcal{A}} \rho$. Using \eqref{semi-norm definition} alongside Assumption \ref{Assumption 2}, we can also show that $\|\hat{y}\|_2 = \Theta(\|(\hat{x},\hat{z})\|_2)$. On the other hand, from the definition of the starting point, we have that: $(x_0,z_0) = \rho(e_n,e_n)$. By defining the following auxiliary point:
$$(\bar{x},\bar{y},\bar{z}) = (x_0,y_0,z_0) + (\hat{x},\hat{y},\hat{z}),$$
\noindent we have that $(\bar{x},\bar{z}) \geq (1-\gamma_{\mathcal{A}})\rho(e_n,e_n)$. By construction, the triple $(\bar{x},\bar{y},\bar{z})$ is a feasible solution for the primal-dual pair in \eqref{tilde non-regularized primal}-\eqref{tilde non-regularized dual}, giving bounded primal and dual objective values, respectively.
\par Using our previous observations, alongside the fact that $\text{rank}(A) = m$ (Assumption \ref{Assumption 2}), we can confirm that there must exist a large constant $M > 0$, independent of $n$, and a triple $(x_s^*,y_s^*,z_s^*)$ solving \eqref{tilde non-regularized primal}-\eqref{tilde non-regularized dual}, such that $\|(x_s^*,y_s^*,z_s^*)\|_{\infty} \leq M  \Rightarrow \|(x_s^*,y_s^*,z_s^*)\|_2 = O(\sqrt{n}).$ 
\par Let us now apply the proximal method of multipliers to \eqref{tilde non-regularized primal}-\eqref{tilde non-regularized dual}, given the estimates $\zeta_k,\ \lambda_k$. We should note at this point, that the proximal operator used here is different from that in \eqref{Primal Dual Proximal Operator}, since it is based on a different maximal monotone operator from that in \eqref{Primal Dual Maximal Monotone Operator}. In particular, we associate the following maximal monotone operator to \eqref{tilde non-regularized primal}-\eqref{tilde non-regularized dual}:
\begin{equation*}
\tilde{T}_{\mathcal{L}}(x,y) = \{(u,v): v = Qx + (c + \bar{c} + \tilde{c}_k) - A^Ty - \theta X^{-1}e_n,\ u = Ax-(b+\bar{b}+\tilde{b}_k) \},
\end{equation*}
\noindent As before, the proximal operator is defined as: $\tilde{P} = (I_{m+n} + \tilde{T}_{\mathcal{L}})^{-1}$, and is single-valued and non-expansive.  We let any $\mu \in [0,\infty)$ and define the following penalty function:
\begin{equation*} 
\begin{split}
\tilde{\mathcal{L}}_{\mu}(x;\zeta_k,\lambda_k) = (c + \bar{c} + \tilde{c}_k)^T x + \frac{1}{2}x^TQx\ + \\
 \frac{1}{2}\mu \|x-\zeta_k\|_2^2 + \frac{1}{2\mu}\|Ax-(b+\bar{b}+\tilde{b}_k)\|_2^2 - (\lambda_k)^T(Ax - (b+\bar{b}+\tilde{b}_k))-\theta \sum_{j=1}^n \ln x^j.
\end{split}
\end{equation*}
\noindent By defining the variables: $y = \lambda_k - \frac{1}{\mu}(Ax - (b+\bar{b}+\tilde{b}_k))$ and $z = \theta X^{-1}e_n$, we can see that the optimality conditions of this PMM sub-problem are exactly those stated in \eqref{tilde point conditions}. Equivalently, we can find a pair $(\tilde{x},\tilde{y})$ such that $(\tilde{x},\tilde{y}) = \tilde{P}(\zeta_k,\lambda_k)$ and set $\tilde{z} = \theta \tilde{X}^{-1}e_n$. In order to conclude the proof, we can use non-expansiveness of $\tilde{P}$, as in Lemma \ref{Lemma non-expansiveness}, to get that:
$$\|(\tilde{x},\tilde{y})-(x_s^*,y_s^*)\|_2 \leq \|(\zeta_k,\lambda_k)-(x_s^*,y_s^*)\|_2.$$
\noindent But we know, from Lemma \ref{Lemma-boundedness of optimal solutions for sub-problems}, that $\|(\zeta_k,\lambda_k)\|_2 = O(\sqrt{n})$, $\forall\ k \geq 0$. Combining this with our previous observations yields that $\|(\tilde{x},\tilde{y})\|_2 = O(\sqrt{n})$. Setting $\tilde{z} = \theta\tilde{X}^{-1}e_n$, gives a triple $(\tilde{x},\tilde{y},\tilde{z})$ that satisfies \eqref{tilde point conditions}, while $\|(\tilde{x},\tilde{y},\tilde{z})\|_2 = O(\sqrt{n})$. This concludes the proof. 
\end{proof}

\noindent In the following Lemma, we derive boundedness of the iterates of Algorithm \ref{Algorithm PMM-IPM}.

\begin{lemma} \label{Lemma boundedness of x z}
Given Assumptions \textnormal{\ref{Assumption 1}} and \textnormal{\ref{Assumption 2}}, the iterates $(x_k,y_k,z_k)$ produced by Algorithm \textnormal{\ref{Algorithm PMM-IPM}}, for all $k \geq 0$, are such that:
$$\|(x_k,z_k)\|_1 = O(n),\ \|(x_k,y_k,z_k)\|_2 = O(n).$$
\end{lemma}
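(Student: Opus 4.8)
The natural route is an orthogonality argument against an auxiliary point that shares the affine part of the neighbourhood $\mathcal{N}_{\mu_k}(\zeta_k,\lambda_k)$ with the current iterate. Fix an iteration $k$, set $t \coloneqq \frac{\mu_k}{\mu_0} \in (0,1]$, and note that, by the inductive maintenance of the neighbourhood, $(x_k,y_k,z_k) \in \mathcal{N}_{\mu_k}(\zeta_k,\lambda_k) \subseteq \tilde{\mathcal{C}}_{\mu_k}(\zeta_k,\lambda_k)$. Lemma~\ref{Lemma-boundedness of optimal solutions for sub-problems} with $\mu = \mu_k$ provides a triple $(x_{r_k}^*,y_{r_k}^*,z_{r_k}^*)$ solving \eqref{PMM optimal solution} with $(x_{r_k}^*)^Tz_{r_k}^* = 0$ and $\|(x_{r_k}^*,y_{r_k}^*,z_{r_k}^*)\|_2 = O(\sqrt n)$, and Lemma~\ref{Lemma tilde point} with $\mu = \mu_k$ and a fixed constant $\theta>0$ provides a strictly positive triple $(\tilde x,\tilde y,\tilde z)$ solving \eqref{tilde point conditions} with $\|(\tilde x,\tilde y,\tilde z)\|_2 = O(\sqrt n)$. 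I would set
\begin{equation*}
(\bar x,\bar y,\bar z) \coloneqq (1-t)\,(x_{r_k}^*,y_{r_k}^*,z_{r_k}^*) + t\,(\tilde x,\tilde y,\tilde z),
\end{equation*}
and check, by forming the same convex combination of \eqref{PMM optimal solution} and \eqref{tilde point conditions}, that $(\bar x,\bar y,\bar z)$ satisfies exactly the two affine equations defining $\tilde{\mathcal{C}}_{\mu_k}(\zeta_k,\lambda_k)$ with the same $\tilde b_k,\tilde c_k$ as the iterate (the terms in $\lambda_k,\zeta_k$ and the constants $b,c,\bar b,\bar c,\tilde b_k,\tilde c_k$ all line up). Therefore $(x_k-\bar x,\,y_k-\bar y,\,z_k-\bar z)$ lies in the null space of the regularized constraint operator: $A(x_k-\bar x)+\mu_k(y_k-\bar y)=0$ and $-Q(x_k-\bar x)+A^T(y_k-\bar y)+(z_k-\bar z)-\mu_k(x_k-\bar x)=0$.

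Solving the second identity for $z_k-\bar z$ and substituting the first to eliminate $A^T(y_k-\bar y)$, the facts $Q\succeq 0$ and $\mu_k>0$ give
\begin{equation*}
(x_k-\bar x)^T(z_k-\bar z) = (x_k-\bar x)^TQ(x_k-\bar x) + \mu_k\|x_k-\bar x\|_2^2 + \mu_k\|y_k-\bar y\|_2^2 \;\geq\; 0,
\end{equation*}
hence $x_k^T\bar z + z_k^T\bar x \le x_k^Tz_k + \bar x^T\bar z = n\mu_k + \bar x^T\bar z$. Since $\mu_k\le\mu_0$, $n\mu_k \le n\mu_0 = \rho^2 n = O(n)$; and since $(x_{r_k}^*)^Tz_{r_k}^*=0$, expanding the convex combination leaves $\bar x^T\bar z = (1-t)t\big((x_{r_k}^*)^T\tilde z + (z_{r_k}^*)^T\tilde x\big) + t^2\,\tilde x^T\tilde z$, which by the $O(\sqrt n)$ bounds and $\tilde x^T\tilde z = n\theta$ is $O(n)$ --- in fact $O(nt)$. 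Thus $x_k^T\bar z + z_k^T\bar x = O(n)$, with both summands non-negative because $x_k,z_k>0$ and $\bar x,\bar z\ge0$.

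Converting this into the claimed $\ell_1$ bound is the step I expect to be the main obstacle. Here I would use $\bar x \ge t\tilde x$ and $\bar z \ge t\tilde z$ componentwise together with $\tilde X\tilde z = \theta e_n$ (so that for each index at least one of $\tilde x^i,\tilde z^i$ is $\ge\sqrt\theta$), split $\{1,\dots,n\}$ accordingly, and bound --- with the help of the neighbourhood condition $x_k^iz_k^i\ge\gamma_\mu\mu_k$ and of $x_k^Tz_k=n\mu_k$ --- whichever of $x_k^i,z_k^i$ is not directly controlled on each block; tracking the factors $t$ (which cancel against $n\mu_k$ and $\bar x^T\bar z$, both of order $nt$) should give $\|(x_k,z_k)\|_1 = O(n)$, whence $\|(x_k,z_k)\|_2 \le \|(x_k,z_k)\|_1 = O(n)$. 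Finally, the dual affine equation yields $A^Ty_k = (c + t\bar c + t\tilde c_k) + Qx_k - z_k + \mu_k(x_k-\zeta_k)$; bounding its right-hand side by Assumption~\ref{Assumption 2}, the $\ell_2$ bounds on $x_k,z_k$, $\mu_k\le\mu_0$, and $\|\zeta_k\|_2 = O(\sqrt n)$ from Lemma~\ref{Lemma-boundedness of optimal solutions for sub-problems}, and using $\eta_{\min}(A)\ge C_A^1$, gives $\|y_k\|_2 \le \tfrac{1}{C_A^1}\|A^Ty_k\|_2 = O(n)$.
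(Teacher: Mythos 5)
Your construction is essentially the paper's: the same convex combination $(1-\tfrac{\mu_k}{\mu_0})(x_{r_k}^*,y_{r_k}^*,z_{r_k}^*)+\tfrac{\mu_k}{\mu_0}(\tilde x,\tilde y,\tilde z)$ compared against the iterate, the same observation that the difference satisfies the homogeneous regularized equations, the same positive-semidefiniteness argument giving $x_k^T\bar z+z_k^T\bar x\le n\mu_k+\bar x^T\bar z=O(n\mu_k/\mu_0)$, and the same finish for $y_k$ via the dual neighbourhood equation and $\eta_{\min}(A)\ge C_A^1$. All of that is sound.

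The genuine gap is exactly the step you flag. The paper closes it by using that \emph{both} vectors of the auxiliary triple are bounded away from zero componentwise by a dimension-independent constant, $(\tilde x,\tilde z)\ge\xi(e_n,e_n)$ with $\xi=\Theta(1)$ (asserted from Lemmas \ref{Lemma-boundedness of optimal solutions for sub-problems} and \ref{Lemma tilde point}), so that $\xi\|(x_k,z_k)\|_1\le\sum_i\big(x_k^i\tilde z^i+z_k^i\tilde x^i\big)=O(n)$ in one line. Your proposed substitute --- per index only one of $\tilde x^i,\tilde z^i\ge\sqrt\theta$, with the other component of $(x_k^i,z_k^i)$ to be recovered from $x_k^iz_k^i\ge\gamma_\mu\mu_k$ and $x_k^Tz_k=n\mu_k$ --- cannot work. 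On an index where $\tilde x^i$ is tiny (and only $\tilde z^i\ge\sqrt\theta$), the inequality $\sum_i x_k^i\tilde z^i=O(n)$ controls $x_k^i$, but nothing controls $z_k^i$: the bound $z_k^i\tilde x^i=O(n)$ is useless when $\tilde x^i$ is small, and the product conditions give only $z_k^i\le n\mu_k/x_k^i$ together with $x_k^i\ge\gamma_\mu\mu_k/z_k^i$, which combine to the vacuous $z_k^i\le(n/\gamma_\mu)z_k^i$. The neighbourhood bound $x_k^iz_k^i\ge\gamma_\mu\mu_k$ is a lower bound on the product and can never cap an individual component from above. So to complete the proof you must rule out small components of $\tilde x$ and of $\tilde z$ simultaneously --- e.g.\ by arguing (as the paper implicitly does) that $\|\tilde x\|_\infty$ and $\|\tilde z\|_\infty$ are $O(1)$, whence $\tilde x^i=\theta/\tilde z^i$ and $\tilde z^i=\theta/\tilde x^i$ are both $\Omega(1)$; without this two-sided componentwise lower bound the claimed $\ell_1$ estimate does not follow from your inequalities.
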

\begin{proof}
\par Let an iterate $(x_k,y_k,z_k) \in \mathcal{N}_{\mu_k}(\zeta_k,\lambda_k)$, produced by Algorithm \ref{Algorithm PMM-IPM} during an arbitrary iteration $k \geq 0$, be given. Firstly, we invoke Lemma \ref{Lemma tilde point}, from which we have a triple $(\tilde{x},\tilde{y},\tilde{z})$ satisfying \eqref{tilde point conditions}, for $\mu = \mu_k$. Similarly, by invoking Lemma \ref{Lemma-boundedness of optimal solutions for sub-problems}, we know that there exists a triple $(x_{r_k}^*,y_{r_k}^*,z_{r_k}^*)$ satisfying \eqref{PMM optimal solution}, with $\mu = \mu_k$. Consider the following auxiliary point:
\begin{equation} \label{auxiliary triple 1}
\bigg((1-\frac{\mu_k}{\mu_0})x_{r_k}^* + \frac{\mu_k}{\mu_0} \tilde{x} - x_k,\ (1-\frac{\mu_k}{\mu_0})y_{r_k}^* +\frac{\mu_k}{\mu_0} \tilde{y} - y_k,\ (1-\frac{\mu_k}{\mu_0})z_{r_k}^* + \frac{\mu_k}{\mu_0} \tilde{z} - z_k\bigg).
\end{equation}
\noindent  Using (\ref{auxiliary triple 1}) and \eqref{PMM optimal solution}-\eqref{tilde point conditions} (for $\mu = \mu_k$), one can observe that:
\begin{equation*}
\begin{split}
A((1-\frac{\mu_k}{\mu_0})x_{r_k}^* + \frac{\mu_k}{\mu_0} \tilde{x} - x_k) + \mu_k ((1-\frac{\mu_k}{\mu_0})y_{r_k}^* + \frac{\mu_k}{\mu_0} \tilde{y} - y_k) = \\
(1-\frac{\mu_k}{\mu_0})(Ax_{r_k}^* + \mu_k y_{r_k}^*) + \frac{\mu_k}{\mu_0} (A\tilde{x}+ \mu_k \tilde{y}) - Ax_k -\mu_k y_k =\\
(1-\frac{\mu_k}{\mu_0}) (b + \mu_k \lambda_k) + \frac{\mu_k}{\mu_0} (b + \mu_k \lambda_k + \tilde{b} +\bar{b}) - Ax_k - \mu_k y_k =\\
b +\mu_k \lambda_k + \frac{\mu_k}{\mu_0}(\tilde{b}+\bar{b}) - Ax_k - \mu_k y_k = &\ 0,
\end{split}
\end{equation*}
\noindent where the last equality follows from the definition of the neighbourhood $\mathcal{N}_{\mu_k}(\zeta_k,\lambda_k)$. Similarly:
\begin{equation*}
-(Q+\mu_k I_n)((1-\frac{\mu_k}{\mu_0})x_{r_k}^* + \frac{\mu_k}{\mu_0} \tilde{x} - x_k) + A^T((1-\frac{\mu_k}{\mu_0})y_{r_k}^* +\frac{\mu_k}{\mu_0} \tilde{y} - y_k) + ((1-\frac{\mu_k}{\mu_0})z_{r_k}^* + \frac{\mu_k}{\mu_0} \tilde{z} - z_k) = 0.
\end{equation*}
\noindent By combining the previous two relations, we have:
\begin{equation} \label{Lemma boundedness of x,z, relation 1}
\begin{split}
((1-\frac{\mu_k}{\mu_0})x_{r_k}^* + \frac{\mu_k}{\mu_0} \tilde{x} - x_k)^T((1-\frac{\mu_k}{\mu_0})z_{r_k}^* + \frac{\mu_k}{\mu_0} \tilde{z} - z_k) = &\\
((1-\frac{\mu_k}{\mu_0})x_{r_k}^* + \frac{\mu_k}{\mu_0}\tilde{x} - x_k)^T(Q+\mu_k I) ((1-\frac{\mu_k}{\mu_0})x_{r_k}^* + \frac{\mu_k}{\mu_0} \tilde{x} - x_k)\  +\\ \mu_k ((1-\frac{\mu_k}{\mu_0})y_{r_k}^* + \frac{\mu_k}{\mu_0} \tilde{y} - y_k)^T((1-\frac{\mu_k}{\mu_0})y_{r_k}^* + \frac{\mu_k}{\mu_0} \tilde{y} - y_k) \geq &\ 0.
\end{split}
\end{equation}
\noindent From (\ref{Lemma boundedness of x,z, relation 1}), it can be seen that:
\begin{equation*}
\begin{split}
((1-\frac{\mu_k}{\mu_0})x_{r_k}^* + \frac{\mu_k}{\mu_0} \tilde{x})^T z_k + ((1-\frac{\mu_k}{\mu_0})z_{r_k}^* + \frac{\mu_k}{\mu_0}\tilde{z})^T x_k \leq \\
((1-\frac{\mu_k}{\mu_0})x_{r_k}^* + \frac{\mu_k}{\mu_0} \tilde{x})^T((1-\frac{\mu_k}{\mu_0})z_{r_k}^* + \frac{\mu_k}{\mu_0} \tilde{z}) + x_k^T z_k.
\end{split}
\end{equation*}
\noindent However, from Lemmas \ref{Lemma-boundedness of optimal solutions for sub-problems} and \ref{Lemma tilde point}, we have that: $(\tilde{x},\tilde{z})  \geq \xi (e_n,e_n)$, for some positive constant $\xi = \Theta(1)$, while $\|(x_{r_k}^*,z_{r_k}^*)\|_2 = O(\sqrt{n})$, and, $\|(\tilde{x},\tilde{z})\|_2 = O(\sqrt{n})$. Furthermore, by definition we have that $ n \mu_k = x_k^Tz_k$. By combining all the previous, we obtain:
\begin{equation} \label{Lemma boundedness of x,z, relation 2}
\begin{split}
\frac{\mu_k}{\mu_0} \xi(e^T x_k + e^T z_k) \leq \\
((1-\frac{\mu_k}{\mu_0})x_{r_k}^* + \frac{\mu_k}{\mu_0} \tilde{x})^T z_k + ((1-\frac{\mu_k}{\mu_0})z_{r_k}^* + \frac{\mu_k}{\mu_0} \tilde{z})^T x_k \leq \\
((1-\frac{\mu_k}{\mu_0})x_{r_k}^* + \frac{\mu_k}{\mu_0} \tilde{x})^T((1-\frac{\mu_k}{\mu_0})z_{r_k}^* + \frac{\mu_k}{\mu_0} \tilde{z}) + x_k^T z_k = \\
\frac{\mu_k}{\mu_0}(1-\frac{\mu_k}{\mu_0}) (x_{r_k}^*)^T \tilde{z} + \frac{\mu_k}{\mu_0} (1-\frac{\mu_k}{\mu_0}) \tilde{x}^T z_{r_k}^* + (\frac{\mu_k}{\mu_0})^2 \tilde{x}^T \tilde{z} + x_k^T z_k = &\ O(\mu_k n),
\end{split}
\end{equation}
\noindent where we used \eqref{PMM optimal solution} ($(x_{r_k}^*)^T(z_{r_k}^*) = 0$). Hence, (\ref{Lemma boundedness of x,z, relation 2}) implies that:
$$\|(x_k,z_k)\|_1 = O(n).$$
\noindent From equivalence of norms, we have that $\|(x_k,z_k)\|_2 \leq \|(x_k,z_k)\|_1$. Finally, from the neighbourhood conditions we know that:
$$c+ Qx_k - A^T y_k - z_k + \mu_k(x_k - \zeta_k) + \frac{\mu_k}{\mu_0} (\tilde{c}_k + \bar{c}) = 0.$$
\noindent All terms above (except for $y_k$) have a 2-norm that is $O(n)$ (note that $\|(\bar{c},\bar{b})\|_2 = O(\sqrt{n})$ using Assumption \ref{Assumption 2} and the definition in \eqref{starting point}). Hence, using again Assumption \ref{Assumption 2} yields that $\|y_k\|_2 = O(n)$, and completes the proof. 
\end{proof}

\noindent As in a typical IPM convergence analysis, we proceed by bounding some components of the scaled Newton direction. The proof of that uses similar arguments to those presented in \cite[Lemma 6.5]{book_4}. Combining this result with Assumption \ref{Assumption 2}, allows us to bound also the unscaled Newton direction. 
\begin{lemma} \label{Lemma boundedness Dx Dz}
Given Assumptions \textnormal{\ref{Assumption 1}} and \textnormal{\ref{Assumption 2}}, and the Newton direction $(\Delta x_k, \Delta y_k, \Delta z_k)$ obtained by solving system \textnormal{(\ref{Newton System})} during an arbitrary iteration $k \geq 0$ of Algorithm \textnormal{\ref{Algorithm PMM-IPM}}, we have that:
$$\|D_k^{-1}\Delta x_k\|_2 =  O(n^{2}\mu^{\frac{1}{2}}),\ \|D_k \Delta z_k \|_2 = O(n^{2}\mu^{\frac{1}{2}}),\ \|(\Delta x_k,\Delta y_k,\Delta z_k)\|_2 = O(n^{3}),$$
with $D_k^2 = X_k Z_k^{-1}$.
\end{lemma}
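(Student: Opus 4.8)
The plan is to adapt the classical scaled-Newton-direction estimate of \cite[Lemma 6.5]{book_4} to the present regularized and infeasible setting, the only real work being to keep track of the powers of $\mu_k$ that the infeasibility residuals and the proximal perturbations introduce. First I would set $\tilde{p}_k \coloneqq D_k^{-1}\Delta x_k$ and $\tilde{q}_k \coloneqq D_k\Delta z_k$, with $D_k = X_k^{1/2}Z_k^{-1/2}$, so that $\tilde{p}_k^T\tilde{q}_k = \Delta x_k^T\Delta z_k$. Multiplying the third block row of \eqref{Newton System} by $(X_kZ_k)^{-1/2}$ gives $\tilde{p}_k+\tilde{q}_k = (X_kZ_k)^{-1/2}(\sigma_k\mu_k e_n - X_kZ_ke_n)$; using $x_k^Tz_k = n\mu_k$ for the first summand and the neighbourhood bound $x_k^iz_k^i\geq\gamma_\mu\mu_k$ together with $\sigma_k\leq\tfrac12$ for the second, one gets $\|\tilde{p}_k+\tilde{q}_k\|_2 = O(n^{1/2}\mu_k^{1/2})$. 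In parallel, substituting the linear identities defining $\mathcal{N}_{\mu_k}(\zeta_k,\lambda_k)$ in \eqref{Small neighbourhood} into the right-hand side of \eqref{Newton System} shows that the first two block residuals $r_d,\,r_p$ equal $-(1-\sigma_k)\mu_k(x_k-\zeta_k)$, $(1-\sigma_k)\mu_k(y_k-\lambda_k)$ up to terms of the form $\tfrac{\mu_k}{\mu_0}(\bar c+\tilde c_k)$, $\tfrac{\mu_k}{\mu_0}(\bar b+\tilde b_k)$; invoking $\|(x_k,y_k,z_k)\|_2 = O(n)$ from Lemma \ref{Lemma boundedness of x z}, $\|(\zeta_k,\lambda_k)\|_2 = O(\sqrt n)$ from Lemma \ref{Lemma-boundedness of optimal solutions for sub-problems}, $\|(\bar b,\bar c)\|_2 = O(\sqrt n)$ (which follows from Assumption \ref{Assumption 2} and \eqref{starting point}), $\|(\tilde b_k,\tilde c_k)\|_2\leq C_N$, and $\mu_0 = \Theta(1)$, one obtains $\|r_p\|_2 = O(n\mu_k)$ and $\|r_d\|_2 = O(n\mu_k)$.

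Next I would derive the fundamental inner-product identity: premultiplying the first block row of \eqref{Newton System} by $\Delta x_k^T$ and using the second block row to eliminate $A\Delta x_k$ yields $\Delta x_k^T\Delta z_k = \Delta x_k^T(Q+\mu_k I_n)\Delta x_k + \mu_k\|\Delta y_k\|_2^2 + \Delta x_k^Tr_d - r_p^T\Delta y_k$, so that, since $Q\succeq 0$ and $\mu_k>0$, $-\Delta x_k^T\Delta z_k \leq \|\Delta x_k\|_2\|r_d\|_2 + \|\Delta y_k\|_2\|r_p\|_2$. To exploit this I need crude bounds on $\|\Delta x_k\|_2,\|\Delta z_k\|_2,\|\Delta y_k\|_2$ in terms of $\|\tilde p_k\|_2,\|\tilde q_k\|_2$. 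From $\|x_k\|_1,\|z_k\|_1 = O(n)$ and $x_k^iz_k^i\geq\gamma_\mu\mu_k$ one gets $\|D_k\|_2,\|D_k^{-1}\|_2 = O(n\mu_k^{-1/2})$, hence $\|\Delta x_k\|_2\leq\|D_k\|_2\|\tilde p_k\|_2 = O(n\mu_k^{-1/2})\|\tilde p_k\|_2$ and likewise $\|\Delta z_k\|_2 = O(n\mu_k^{-1/2})\|\tilde q_k\|_2$. Crucially, $\|\Delta y_k\|_2$ must be estimated from the \emph{dual} block row, $A^T\Delta y_k = r_d + (Q+\mu_k I_n)\Delta x_k - \Delta z_k$, which with $\eta_{\min}(A)\geq C_A^1$ and $\nu_{\max}(Q)\leq C_Q$ gives $\|\Delta y_k\|_2 = O(n\mu_k) + O(n\mu_k^{-1/2})(\|\tilde p_k\|_2+\|\tilde q_k\|_2)$; estimating $\Delta y_k$ from the primal block row instead would introduce a factor $\mu_k^{-1}$ and the argument would not close.

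Finally, writing $s_k\coloneqq\|\tilde p_k\|_2+\|\tilde q_k\|_2$ and combining $\|\tilde p_k\|_2^2+\|\tilde q_k\|_2^2 = \|\tilde p_k+\tilde q_k\|_2^2 - 2\Delta x_k^T\Delta z_k$ with the bounds above gives $\tfrac12 s_k^2 \leq O(n^2\mu_k) + O(n^2\mu_k^{1/2})s_k$, where $\mu_k\leq\mu_0 = \Theta(1)$ is used to absorb the lower-order $n\mu_k$ and $n^2\mu_k^2$ terms; solving this quadratic inequality in $s_k$ yields $s_k = O(n^2\mu_k^{1/2})$, i.e. $\|D_k^{-1}\Delta x_k\|_2 = O(n^2\mu_k^{1/2})$ and $\|D_k\Delta z_k\|_2 = O(n^2\mu_k^{1/2})$. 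Unscaling through $\|\Delta x_k\|_2\leq\|D_k\|_2\|\tilde p_k\|_2$, $\|\Delta z_k\|_2\leq\|D_k^{-1}\|_2\|\tilde q_k\|_2$ and the bound on $\|\Delta y_k\|_2$ then gives $\|(\Delta x_k,\Delta y_k,\Delta z_k)\|_2 = O(n^3)$. The main obstacle is purely the bookkeeping of $\mu_k$-powers: one must choose the crude estimates (in particular bounding $\Delta y_k$ via the dual equation, and the $O(n\mu_k^{-1/2})$ bounds on $\|D_k^{\pm1}\|_2$) so that the residual products $\|r_d\|_2\|\Delta x_k\|_2$ and $\|r_p\|_2\|\Delta y_k\|_2$ contribute a $\mu_k^{+1/2}s_k$ (and not a $\mu_k^{-1/2}s_k$) term to the quadratic inequality, which is precisely what makes it solvable with the claimed $O(n^2\mu_k^{1/2})$ bound; the accompanying care is the neighbourhood substitution that certifies $\|r_p\|_2,\|r_d\|_2 = O(n\mu_k)$.
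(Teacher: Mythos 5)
Your proof is correct, but it takes a genuinely different route from the paper's. The paper follows the classical infeasible-IPM template of \cite[Lemma 6.5]{book_4}: it shifts the Newton direction by the auxiliary triple $\frac{\mu_k}{\mu_0}(\tilde{x},\tilde{y},\tilde{z})-\frac{\mu_k}{\mu_0}(x_{r_k}^*,y_{r_k}^*,z_{r_k}^*)+\mu_k(\hat{x},0,\hat{z})$, where the tilde and starred points come from Lemmas \ref{Lemma tilde point} and \ref{Lemma-boundedness of optimal solutions for sub-problems} (invoked with $\mu=\sigma_k\mu_k$) and the hat point is constructed from Assumption \ref{Assumption 2}; the shifted triple $(\bar{x},\bar{y},\bar{z})$ satisfies the homogeneous regularized equations, so $\bar{x}^T\bar{z}\geq 0$ exactly, and the bound follows from $\|D_k^{-1}\bar{x}\|_2^2+\|D_k\bar{z}\|_2^2\leq\|D_k^{-1}\bar{x}+D_k\bar{z}\|_2^2$ together with the third block equation, the shift being removed at the end by the triangle inequality. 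You instead work with the unshifted direction: the identity $\Delta x_k^T\Delta z_k=\Delta x_k^T(Q+\mu_k I)\Delta x_k+\mu_k\|\Delta y_k\|_2^2+\Delta x_k^Tr_d-r_p^T\Delta y_k$ lets you bound the possibly negative cross term by the residuals, which the neighbourhood \eqref{Small neighbourhood} forces to be $O(n\mu_k)$, and a quadratic inequality in $s_k=\|D_k^{-1}\Delta x_k\|_2+\|D_k\Delta z_k\|_2$ closes the argument; your bookkeeping (the $O(n\mu_k^{-1/2})$ bounds on $\|D_k^{\pm 1}\|_2$ from Lemma \ref{Lemma boundedness of x z}, the residual bounds via Lemmas \ref{Lemma boundedness of x z} and \ref{Lemma-boundedness of optimal solutions for sub-problems}, estimating $\Delta y_k$ from the dual row using $\eta_{\min}(A)\geq C_A^1$, and absorbing $n^2\mu_k^2$ via $\mu_0=\Theta(1)$) is accurate, and the solution of the quadratic inequality indeed gives $O(n^2\mu_k^{1/2})$ and hence $O(n^3)$ after unscaling. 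What your route buys is economy: for this lemma you dispense with Lemma \ref{Lemma tilde point} and the hat-point construction altogether, needing only the iterate and proximal-estimate bounds plus Assumption \ref{Assumption 2}. What the paper's route buys is an exact sign condition ($\bar{x}^T\bar{z}\geq0$) in place of a quadratic inequality, staying closer to the standard infeasible-IPM analysis that the surrounding lemmas (notably the step-length analysis) are modelled on; the two arguments yield the same constants in $n$ and $\mu_k$. One cosmetic remark: the dual residual actually reads $-(1-\sigma_k)\frac{\mu_k}{\mu_0}\bar{c}-\frac{\mu_k}{\mu_0}\tilde{c}_k-(1-\sigma_k)\mu_k(x_k-\zeta_k)$ (and analogously for the primal one), but since you only use these "up to terms of the form" stated, the $O(n\mu_k)$ estimates are unaffected.
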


\begin{proof}
\par Consider an arbitrary iteration $k$ of Algorithm \ref{Algorithm PMM-IPM}. We invoke Lemmas \ref{Lemma-boundedness of optimal solutions for sub-problems}, \ref{Lemma tilde point}, for $\mu = \sigma_k \mu_k$. That is, there exists a triple $(x_{r_k}^*,y_{r_k}^*,z_{r_k}^*)$ satisfying \eqref{PMM optimal solution}, and a triple $(\tilde{x},\tilde{y},\tilde{z})$ satisfying \eqref{tilde point conditions}, for $\mu = \sigma_k \mu_k$. Using the centering parameter $\sigma_k$, we define the following vectors: 
\begin{equation} \label{Boundedness dx dz c hat b hat equation}
 \hat{c} = -\bigg(\sigma_k \frac{\bar{c}}{\mu_0} - (1-\sigma_k)\big(x_k - \zeta_k + \frac{\mu_k}{\mu_0}(\tilde{x}-x_{r_k}^*)\big)\bigg),\ \hat{b} =  -\bigg(\sigma_k \frac{\bar{b}}{\mu_0} + (1-\sigma_k)\big(y_k - \lambda_k +\frac{\mu_k}{\mu_0}(\tilde{y}-y_{r_k}^*)\big)\bigg),
\end{equation}
\noindent where $\bar{b},\ \bar{c},\ \mu_0$ are defined in \eqref{starting point}. Using Lemmas \ref{Lemma-boundedness of optimal solutions for sub-problems}, \ref{Lemma tilde point}, \ref{Lemma boundedness of x z}, and Assumption \ref{Assumption 2}, we know that $\|(\hat{c},\hat{b})\|_2 = O(n)$. Then, by applying again Assumption \ref{Assumption 2}, we know that there must exist a vector $\hat{x}$ such that: $A\hat{x} = \hat{b},\ \|\hat{x}\|_2 = O(n)$, and by setting $\hat{z} =  \hat{c} + Q\hat{x} + \mu \hat{x}$, we have that $\|\hat{z}\|_2 = O(n)$ and:
\begin{equation} \label{Boundedness of Dx,Dz, hat point}
 A\hat{x} = \hat{b},\ -Q\hat{x}+ \hat{z} - \mu_k \hat{x} = \hat{c}.
 \end{equation}
\par Using $(x_{r_k}^*,y_{r_k}^*,z_{r_k}^*)$, $(\tilde{x},\tilde{y},\tilde{z})$, as well as the triple $(\hat{x},0,\hat{z})$, where $(\hat{x},\hat{z})$ is defined in \eqref{Boundedness of Dx,Dz, hat point}, we can define the following auxiliary triple:
\begin{equation} \label{Lemma Dx Dz boundedness, auxiliary triple}
(\bar{x},\bar{y},\bar{z}) = (\Delta x_k, \Delta y_k, \Delta z_k) + \frac{\mu_k}{\mu_0} (\tilde{x}, \tilde{y}, \tilde{z}) - \frac{\mu_k}{\mu_0} (x_{r_k}^*, y_{r_k}^*,z_{r_k}^*) + \mu_k (\hat{x},0,\hat{z}).
\end{equation}
\noindent Using \eqref{Lemma Dx Dz boundedness, auxiliary triple}, \eqref{Boundedness dx dz c hat b hat equation}, \eqref{PMM optimal solution}-\eqref{tilde point conditions} (with $\mu = \sigma_k \mu_k$), and the second block equation of \eqref{Newton System}, we have:
\begin{equation*}
\begin{split}
A\bar{x} + \mu_k \bar{y} = &\ (A \Delta x_k + \mu_k \Delta y_k) + \frac{\mu_k}{\mu_0}((A\tilde{x} +  \mu_k \tilde{y})- (Ax_{r_k}^*+  \mu_k y_{r_k}^*)) + \mu_k A\hat{x}\\
= &\ \big(b + \sigma_k\frac{\mu_k}{\mu_0}\bar{b}-Ax_k - \sigma_k \mu_k(y_k-\lambda_k)\big) + \frac{\mu_k}{\mu_0}((A\tilde{x} +  \mu_k \tilde{y})- (Ax_{r_k}^*+  \mu_k y_{r_k}^*))\\
 &\ - \mu_k \big(\sigma_k \frac{\bar{b}}{\mu_0} + (1-\sigma_k)(y_k-\lambda_k)\big) - \frac{\mu_k}{\mu_0}(1-\sigma_k)\mu_k(\tilde{y}-y_{r_k}^*)\\
= &\ \big(b + \sigma_k\frac{\mu_k}{\mu_0}\bar{b}-Ax_k - \sigma_k \mu_k(y_k-\lambda_k)\big) + \frac{\mu_k}{\mu_0}(b+\sigma_k\mu_k \lambda_k+\bar{b}+\tilde{b}_k)\\
 &\ - \frac{\mu_k}{\mu_0} (\sigma_k \mu_k \lambda_k + b) - \mu_k \big(\sigma_k \frac{\bar{b}}{\mu_0} + (1-\sigma_k)(y_k-\lambda_k)\big)\\
 = &\ b + \frac{\mu_k}{\mu_0}(\bar{b}+\tilde{b}_k) - Ax_k - \mu_k (y_k-\lambda_k)\\
 = &\ 0, 
\end{split}
\end{equation*}
\noindent where the last equation follows from the neighbourhood conditions (i.e. $(x_k,y_k,z_k) \in \mathcal{N}_{\mu_k}(\zeta_k,\lambda_k)$). Similarly, we can show that:
$$-Q\bar{x} + A^T\bar{y} + \bar{z}-\mu_k \bar{x} = 0.$$
\par The previous two equalities imply that: 
\begin{equation} \label{Lemma boundedness Dx Dz, complementarity positivity}
\begin{split}
\bar{x}^T \bar{z} = 
\bar{x}^T(Q\bar{x} - A^T \bar{y}+\mu_k \bar{x}) =  \bar{x}^T (Q + \mu_k I)\bar{x} + \mu_k \bar{y}^T \bar{y} \geq 0.
\end{split}
\end{equation}
\noindent On the other hand, using the last block equation of the Newton system (\ref{Newton System}), we have:
\begin{equation*}
Z_k\bar{x} + X_k \bar{z} = - X_kZ_ke_n + \sigma_k \mu_k e_n + \frac{\mu_k}{\mu_0} Z_k(\tilde{x}-x_{r_k}^*)+\frac{\mu_k}{\mu_0}  X_k(\tilde{z}- z_{r_k}^*) + \mu_k Z_k \hat{x} + \mu_k X_k \hat{z}.
\end{equation*}
\noindent Let $W_k = (X_kZ_k)^{\frac{1}{2}}$. By multiplying both sides of the previous equation by $W_k^{-1}$, we get:
\begin{equation} \label{Lemma boundedness Dx Dz, relation 1}
D_k^{-1}\bar{x} + D_k\bar{z} = - W_k^{-1}(X_kZ_ke_n- \sigma_k \mu_ke_n) + \frac{\mu_k}{\mu_0} \big(D_k^{-1}(\tilde{x}-x_{r_k}^*) + D_k(\tilde{z}-z_{r_k}^*)\big)+ \mu_k \big(D_k^{-1} \hat{x} + D_k \hat{z}\big).
\end{equation}
\noindent But, from (\ref{Lemma boundedness Dx Dz, complementarity positivity}), we know that $\bar{x}^T \bar{z} \geq 0$ and hence:
\begin{equation*}
\|D_k^{-1}\bar{x} + D_k \bar{z}\|_2^2 \geq \|D_k^{-1} \bar{x}\|_2^2 + \|D_k \bar{z}\|_2^2.
\end{equation*}
\noindent Combining (\ref{Lemma boundedness Dx Dz, relation 1}) with the previous inequality, gives:
\begin{equation*}
\begin{split}
\|D_k^{-1}\bar{x}\|_2^2 + \|D_k\bar{z}\|_2^2  \leq &\ \bigg\{\|W_k^{-1}\|_2\|X_kZ_ke_n-\sigma_k \mu_k e_n\|_2 +\\ &\ \frac{\mu_k}{\mu_0} \big(\|D_k^{-1}(\tilde{x}-x_{r_k}^*)\|_2 +  \|D_k(\tilde{z}-z_{r_k}^*)\|_2\big) + \mu_k\big(\|D_k^{-1} \hat{x}\|_2+\| D_k \hat{z}\|_2 \big) \bigg\}^2.
\end{split}
\end{equation*}
\par We isolate one of the two terms of the left hand side of the previous inequality, take square roots on both sides, use \eqref{Lemma Dx Dz boundedness, auxiliary triple} and apply the triangular inequality to it, to obtain:
\begin{equation} \label{Lemma boundedness Dx Dz, relation 2}
\begin{split}
\|D_k^{-1} \Delta x_k \|_2 \leq &\ \|W_k^{-1}\|_2 \|X_kZ_ke_n -\sigma_k \mu_k e_n\|_2\\
&\ + \frac{\mu_k}{\mu_0}\big( 2\|D_k^{-1} (\tilde{x}-x_{r_k}^*)\|_2 +\|D_k(\tilde{z}-z_{r_k}^*)\|_2\big)+ \mu_k\big(2\|D_k^{-1} \hat{x}\|_2+\| D_k \hat{z}\|_2 \big).
\end{split}
\end{equation} 
\par We now proceed to bounding the terms in the right hand side of (\ref{Lemma boundedness Dx Dz, relation 2}). Firstly, notice from the neighbourhood conditions (\ref{Small neighbourhood}) that $\gamma_{\mu} \mu_k  \leq x^i_k z^i_k$. This in turn implies that:
\begin{equation*}
\|W_k^{-1}\|_2 = \max_i \frac{1}{(x^i_k z^i_k)^{\frac{1}{2}}} \leq \frac{1}{(\gamma_{\mu} \mu_k)^{\frac{1}{2}}}.
\end{equation*}
\noindent On the other hand, we have that:
\begin{equation*}
\begin{split}
\|X_kZ_ke_n-\sigma_k \mu_k e_n\|_2^2 = &\  \|X_kZ_ke_n\|^2 - 2\sigma_k \mu_k x_k^T z_k + \sigma_k^2\mu_k^2 n\\
\leq &\ \|X_kZ_ke_n\|_1^2 - 2\sigma_k \mu_k x_k^T z_k + \sigma_k^2\mu_k^2 n\\
= &\ (\mu_k n)^2 - 2\sigma_k \mu_k^2 n + \sigma_k \mu_k^2 n\\
\leq &\ \mu_k^2 n^2 .
\end{split}
\end{equation*}
\noindent Hence, combining the previous two relations yields:
\begin{equation*}
\|W_k^{-1}\|_2\|X_kZ_ke_n-\sigma_k \mu_k e_n\|_2 \leq \frac{n}{\gamma_{\mu}^{\frac{1}{2}}} \mu_k^{\frac{1}{2}} = O\big(n \mu^{\frac{1}{2}} \big).
\end{equation*}
\noindent We proceed by bounding $\|D_k^{-1}\|_2$. For that, using Lemma \ref{Lemma boundedness of x z}, we have:
\begin{equation*}
\|D_k^{-1}\|_2 = \max_i |(D_k^{ii})^{-1}| = \|D_k^{-1}e_n\|_{\infty} = \|W_k^{-1}Z_k e_n\|_{\infty} \leq \|W_k^{-1}\|_2\|z_k\|_1 = O\bigg(\frac{n}{\mu_k^{\frac{1}{2}}} \bigg).
\end{equation*}
\noindent Similarly, we have that:
\begin{equation*}
\|D_k\|_2 = O\bigg(\frac{n}{\mu_k^{\frac{1}{2}}} \bigg).
\end{equation*}
\noindent Hence, using the previous bounds, as well as Lemmas \ref{Lemma-boundedness of optimal solutions for sub-problems}, \ref{Lemma tilde point}, we obtain:
\begin{equation*}
\begin{split}
2\frac{\mu_k}{\mu_0}\|D_k^{-1}(\tilde{x}-x_{r_k}^*)\|_2 +\frac{\mu_k}{\mu_0}\|D_k(\tilde{z}-z_{r_k}^*)\|_2 \leq &\ 2\frac{\mu_k}{\mu_0}(\|D_k^{-1}\|_2 + \|D_k\|_2)\max\{\|\tilde{x}-x_{r_k}^*\|_2,\|\tilde{z}-z_{r_k}^*\|_2\}\\
= &\ O\big(n^{\frac{3}{2}}\mu_k^{\frac{1}{2}}\big),
\end{split}
\end{equation*}
\noindent and
\begin{equation*}
\begin{split}
\mu_k\big(2\|D_k^{-1} \hat{x}\|_2+\| D_k \hat{z}\|_2\big) \leq 2 \mu_k(\|D_k^{-1}\|_2 + \|D_k\|_2)\max\{\|\hat{x}\|_2,\|\hat{z}\|_2\} = O(n^2 \mu_k^{\frac{1}{2}}).
\end{split}
\end{equation*}
\noindent Combining all the previous bounds yields the claimed bound for $\|D_k^{-1}\Delta x_k\|_2$. One can bound $\|D_k \Delta z_k\|_2$ in the same way. The latter is omitted for ease of presentation. 
\par Finally, we have that:
$$\|\Delta x_k\|_2 = \|D_k D_k^{-1} \Delta x_k\|_2 \leq \|D_k\|_2\|D_k^{-1} \Delta x_k\|_2 = O(n^3).$$
\noindent Similarly, we can show that $\|\Delta z_k \|_2 = O(n^3)$. From the first block equation of the Newton system in \eqref{Newton System}, alongside Assumption \ref{Assumption 2}, we can show that $\|\Delta y_k\|_2 = O(n^3)$, which completes the proof. 
\end{proof}

\noindent We are now able to prove that at every iteration of Algorithm \ref{Algorithm PMM-IPM}, there exists a step-length $\alpha_k > 0$, using which, the new iterate satisfies the conditions required by the algorithm.
\begin{lemma} \label{Lemma step-length}
Given Assumptions \textnormal{\ref{Assumption 1}} and \textnormal{\ref{Assumption 2}}, there exists a step-length $\bar{\alpha} \in (0,1)$, such that for all $\alpha \in [0,\bar{\alpha}]$ and for all iterations $k \geq 0$ of Algorithm \textnormal{\ref{Algorithm PMM-IPM}}, the following relations hold:
\begin{equation} \label{Lemma step-length relation 1}
(x_k + \alpha \Delta x_k)^T(z_k + \alpha \Delta z_k) \geq (1-\alpha(1-\beta_1))x_k^T z_k,
\end{equation}
\begin{equation} \label{Lemma step-length relation 2}
(x^i_k + \alpha \Delta x^i_k)(z^i_k + \alpha \Delta z^i_k) \geq \frac{\gamma_{\mu}}{n}(x_k + \alpha \Delta x_k)^T(z_k + \alpha \Delta z_k),\ \text{for all } i \in \{1,\ldots,n\},
\end{equation}
\begin{equation} \label{Lemma step-length relation 3}
(x_k + \alpha \Delta x_k)^T(z_k + \alpha \Delta z_k) \leq (1-\alpha(1-\beta_2))x_k^T z_k,
\end{equation}
where, without loss of generality, $\beta_1 = \frac{\sigma_{\min}}{2}$ and $\beta_2 = 0.99$. Moreover, $\bar{\alpha} \geq \frac{\bar{\kappa}}{n^4}$ for all $k\geq 0$, where $\bar{\kappa} > 0$ is independent of $n$ and $m$, and if $(x_k,y_k,z_k) \in \mathcal{N}_{\mu_k}(\zeta_k,\lambda_k)$, then letting:
$$(x_{k+1},y_{k+1},z_{k+1}) = (x_k + \alpha\Delta x_k,y_k + \alpha\Delta y_k, z_k + \alpha\Delta z_k),\ \mu_{k+1} = \frac{x_{k+1}^Tz_{k+1}}{n},\ \ \forall\  \alpha \in (0,\bar{\alpha}]$$
\noindent gives: $(x_{k+1},y_{k+1},z_{k+1}) \in \mathcal{N}_{\mu_{k+1}}(\zeta_{k+1},\lambda_{k+1})$, where $\lambda_k,\ \zeta_k$ are updated as in Algorithm \textnormal{\ref{Algorithm PMM-IPM}}.
\end{lemma}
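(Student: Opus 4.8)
The plan is to argue along the Newton direction by direct computation, in the spirit of a standard infeasible-IPM step-length analysis \cite{book_4} but carrying along the extra proximal/regularization terms; throughout we use the hypothesis $(x_k,y_k,z_k)\in\mathcal{N}_{\mu_k}(\zeta_k,\lambda_k)$, which is also what licenses the bounds of Lemmas \ref{Lemma boundedness of x z} and \ref{Lemma boundedness Dx Dz}.

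\emph{Step 1 (complementarity relations).} Expanding
\begin{equation*}
(x_k+\alpha\Delta x_k)^T(z_k+\alpha\Delta z_k) = x_k^Tz_k + \alpha(z_k^T\Delta x_k+x_k^T\Delta z_k) + \alpha^2\Delta x_k^T\Delta z_k,
\end{equation*}
and summing the third block of \eqref{Newton System} gives $z_k^T\Delta x_k+x_k^T\Delta z_k = -(1-\sigma_k)n\mu_k$, together with the componentwise identity $z_k^i\Delta x_k^i+x_k^i\Delta z_k^i = -x_k^iz_k^i+\sigma_k\mu_k$. Because $D_k$ is diagonal, $|\Delta x_k^T\Delta z_k|\le\|D_k^{-1}\Delta x_k\|_2\|D_k\Delta z_k\|_2 = O(n^4\mu_k)$ and likewise $|\Delta x_k^i\Delta z_k^i| = O(n^4\mu_k)$ by Lemma \ref{Lemma boundedness Dx Dz}. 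Hence $n\mu_k(\alpha) = (1-\alpha(1-\sigma_k))n\mu_k + \alpha^2\Delta x_k^T\Delta z_k$; using $\sigma_k\in[\sigma_{\min},\sigma_{\max}]\subseteq(0,0.5]$ one checks that \eqref{Lemma step-length relation 1} (with $\beta_1=\sigma_{\min}/2$) and \eqref{Lemma step-length relation 3} (with $\beta_2=0.99$, so that $1-\alpha(1-\beta_2)=1-0.01\alpha$ matches the algorithm) both follow once $\alpha|\Delta x_k^T\Delta z_k|$ is dominated by a constant times $n\mu_k$, i.e. for $\alpha = \Omega(n^{-3})$. For \eqref{Lemma step-length relation 2}, I would insert $x_k^iz_k^i\ge\gamma_\mu\mu_k$ into the scalar expansion and compare with $\gamma_\mu\mu_k(\alpha)$; after the $(1-\alpha)\gamma_\mu\mu_k$ terms cancel, the inequality reduces to $\alpha\sigma_{\min}(1-\gamma_\mu)\mu_k$ dominating $\alpha^2\big(|\Delta x_k^i\Delta z_k^i|+\tfrac{\gamma_\mu}{n}|\Delta x_k^T\Delta z_k|\big) = \alpha^2 O(n^4\mu_k)$, i.e. $\alpha = \Omega(n^{-4})$. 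This is the binding requirement and is the source of $\bar\alpha\ge\bar\kappa/n^4$. Positivity of $(x_{k+1},z_{k+1})$ then follows: \eqref{Lemma step-length relation 1} gives $\mu_k(\alpha)\ge(1-\bar\alpha)\mu_k>0$ on $[0,\bar\alpha]$, so by \eqref{Lemma step-length relation 2} no product $x^i(\alpha)z^i(\alpha)$ vanishes on $[0,\bar\alpha]$, and since $x^i(0),z^i(0)>0$ a continuity argument keeps both factors positive.

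\emph{Step 2 (membership in the set of centers).} The more delicate part is to exhibit $(\tilde b_{k+1},\tilde c_{k+1})$ with $\|(\tilde b_{k+1},\tilde c_{k+1})\|_2\le C_N$ and $\|(\tilde b_{k+1},\tilde c_{k+1})\|_{\mathcal{A}}\le\gamma_{\mathcal{A}}\rho$. Using the first two block rows of \eqref{Newton System} and the step-$k$ affine conditions (which, by $(x_k,y_k,z_k)\in\mathcal{N}_{\mu_k}(\zeta_k,\lambda_k)$, equal $\tfrac{\mu_k}{\mu_0}(\bar b+\tilde b_k)$ and $\tfrac{\mu_k}{\mu_0}(\bar c+\tilde c_k)$), a direct computation shows that the step-$(k{+}1)$ primal residual $Ax_{k+1}+\mu_{k+1}(y_{k+1}-\lambda_k)-b-\tfrac{\mu_{k+1}}{\mu_0}\bar b$ equals $\tfrac{(1-\alpha)\mu_k}{\mu_0}\tilde b_k$ plus a remainder whose every term carries a factor $\alpha^2$ — these arise from $\mu_{k+1}-(1-\alpha(1-\sigma_k))\mu_k=\tfrac{\alpha^2}{n}\Delta x_k^T\Delta z_k$ and from $\alpha\Delta y_k$ multiplied by that same quantity; the dual residual behaves analogously. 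Dividing by $\tfrac{\mu_{k+1}}{\mu_0}$ reads off $\tilde b_{k+1}=\tfrac{(1-\alpha)\mu_k}{\mu_{k+1}}\tilde b_k+(\text{remainder})$, and similarly for $\tilde c_{k+1}$. Now \eqref{Lemma step-length relation 1}--\eqref{Lemma step-length relation 3} give $(1-\alpha(1-\beta_1))\mu_k\le\mu_{k+1}$, whence $\tfrac{(1-\alpha)\mu_k}{\mu_{k+1}}\le\tfrac{1-\alpha}{1-\alpha(1-\beta_1)}\le 1-c_0\alpha$ for $c_0=\beta_1>0$, so the persistent part is a $(1-c_0\alpha)$-contraction of $(\tilde b_k,\tilde c_k)$ in both $\|\cdot\|_2$ and the seminorm $\|\cdot\|_{\mathcal{A}}$ (the latter satisfying the triangle inequality). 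Meanwhile, by Lemmas \ref{Lemma boundedness of x z}, \ref{Lemma boundedness Dx Dz} and $\mu_{k+1}=\Theta(\mu_k)$, the remainder is $O(\alpha^2 n^4)$ in $\|\cdot\|_2$, hence — since $\|(b,c)\|_{\mathcal{A}}=O(\|(b,c)\|_2)$ under Assumption \ref{Assumption 2} — also in $\|\cdot\|_{\mathcal{A}}$. Thus $\|(\tilde b_{k+1},\tilde c_{k+1})\|_2\le(1-c_0\alpha)C_N+O(\alpha^2 n^4)$ and the same bound with $\gamma_{\mathcal{A}}\rho$; shrinking $\bar\alpha$ again to $\Omega(n^{-4})$ forces $O(\alpha^2 n^4)\le c_0\alpha\min\{C_N,\gamma_{\mathcal{A}}\rho\}$ and restores both bounds, proving $(x_{k+1},y_{k+1},z_{k+1})\in\mathcal{N}_{\mu_{k+1}}(\zeta_k,\lambda_k)$.

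\emph{Step 3 (the update rule and conclusion).} If the algorithm keeps $(\zeta_{k+1},\lambda_{k+1})=(\zeta_k,\lambda_k)$, Step 2 finishes the proof. If it sets $(\zeta_{k+1},\lambda_{k+1})=(x_{k+1},y_{k+1})$, the proximal terms in the affine conditions of $\tilde{\mathcal{C}}_{\mu_{k+1}}(x_{k+1},y_{k+1})$ vanish and those conditions become precisely $\tilde b_{k+1}=\tfrac{\mu_0}{\mu_{k+1}}r_p$ and $\tilde c_{k+1}=-\tfrac{\mu_0}{\mu_{k+1}}r_d$ with $r_p,r_d$ as in Algorithm \ref{Algorithm PMM-IPM}; the test that triggers the update, $\|(r_p,r_d)\|_2\le C_N\tfrac{\mu_{k+1}}{\mu_0}$ and $\|(r_p,r_d)\|_{\mathcal{A}}\le\gamma_{\mathcal{A}}\rho\tfrac{\mu_{k+1}}{\mu_0}$, is exactly the required pair of bounds on $(\tilde b_{k+1},\tilde c_{k+1})$, while the complementarity and positivity conditions do not depend on $(\zeta,\lambda)$ and were verified in Step 1. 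Hence $(x_{k+1},y_{k+1},z_{k+1})\in\mathcal{N}_{\mu_{k+1}}(\zeta_{k+1},\lambda_{k+1})$ in either case. Finally, $\bar\alpha$ is taken as the minimum of the finitely many positive lower bounds collected above, each $\Omega(n^{-4})$ (most are larger), giving $\bar\alpha\ge\bar\kappa/n^4$ with $\bar\kappa$ independent of $n,m$; if this exceeds $1$ we simply cap $\bar\alpha$ below $1$. The main obstacle is the residual-tracking computation in Step 2: one must propagate the proximal terms and the $O(\alpha^2)$ gap between $\mu_{k+1}$ and its linear model through the algebra and, crucially, exploit the guaranteed decrease $\mu_{k+1}\le(1-0.01\alpha)\mu_k$ to produce the contraction factor $1-c_0\alpha$, without which the accumulated scaled infeasibility could drift past the fixed constants $C_N$ and $\gamma_{\mathcal{A}}\rho$.
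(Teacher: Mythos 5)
Your proposal is correct and follows essentially the same route as the paper's proof: the same expansion of the complementarity products with the $O(n^4\mu_k)$ bounds from Lemma \ref{Lemma boundedness Dx Dz} yielding \eqref{Lemma step-length relation 1}--\eqref{Lemma step-length relation 3} and the $\Omega(n^{-4})$ bottleneck from \eqref{Lemma step-length relation 2}, and the same residual-propagation identity (new scaled residual $=(1-\alpha)\frac{\mu_k}{\mu_0}\tilde{b}_k$ plus $O(\alpha^2)$ terms, cf.\ \eqref{primal infeasibility formula}--\eqref{dual infeasibility formula}) combined with $\mu_k(\alpha)\ge(1-\alpha(1-\beta_1))\mu_k$ --- the paper compares the residual norms with $C_N\frac{\mu_k(\alpha)}{\mu_0}$ directly rather than phrasing it as a contraction of the scaled infeasibility vectors, but the inequality is identical --- with the update case dispatched, as in the paper, by the algorithm's own acceptance test. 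The only slip is in your closing sentence: the contraction factor comes from the lower bound $\mu_{k+1}\ge(1-\alpha(1-\beta_1))\mu_k$, which your Step 2 correctly uses, not from the decrease $\mu_{k+1}\le(1-0.01\alpha)\mu_k$.
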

\begin{proof}
\par In order to prove the first three inequalities, we follow the developments in \cite[Chapter 6, Lemma 6.7]{book_4}. From Lemma \ref{Lemma boundedness Dx Dz}, we have that there exists a constant $C_{\Delta} >0$, such that:
$$(\Delta x_k)^T \Delta z_k = (D_k^{-1} \Delta x_k)^T (D_k \Delta z_k) \leq \|D_k^{-1} \Delta x_k\|_2 \|D_k \Delta z_k\|_2 \leq C_{\Delta}^2 n^4 \mu_k.$$
\noindent Similarly, it is easy to see that:
$$|\Delta x^i_k \Delta z^i_k| \leq C_{\Delta}^2 n^4 \mu_k.$$
\noindent On the other hand, by summing over all $n$ components of the last block equation of the Newton system (\ref{Newton System}), we have:
\begin{equation} \label{Lemma step-length equation 1}
z_k^T \Delta x_k + x_k^T \Delta z_k = e_n^T(Z_k \Delta x_k + X_k \Delta z_k) = e_n^T(-X_kZ_ke_n+ \sigma_k \mu_k e_n) = (\sigma_k - 1) x_k^T z_k,
\end{equation}
\noindent while the components of the last block equation of the Newton system (\ref{Newton System}) can be written as:
\begin{equation} \label{Lemma step-length equation 2}
z^i_k\Delta x^i_k + x^i_k \Delta z^i_k = -x^i_k z^i_k + \sigma_k \mu_k.
\end{equation}
\par We proceed by proving (\ref{Lemma step-length relation 1}). Using (\ref{Lemma step-length equation 1}), we have:
\begin{equation*}
\begin{split}
(x_k + \alpha \Delta x_k)^T (z_k + \alpha \Delta z_k) - (1-\alpha(1 -\beta_1))x_k^T z_k = \\
x_k^T z_k +\alpha (\sigma_k - 1)x_k^T z_k + \alpha^2 \Delta x_k^T \Delta z_k - (1-\alpha)x_k^T z_k -\alpha \beta_1 x_k^T z_k \geq \\
\alpha (\sigma_k - \beta_1) x_k^Tz_k  - \alpha^2 C_{\Delta}^2 n^4 \mu_k \geq \alpha (\frac{\sigma_{\min}}{2})n \mu_k - \alpha^2 C_{\Delta}^2 n^4 \mu_k,
\end{split}
\end{equation*}
\noindent where we set (without loss of generality) $\beta_1 = \frac{\sigma_{\min}}{2}$. The most-right hand side of the previous inequality will be non-negative for every $\alpha$ satisfying:
$$\alpha \leq \frac{\sigma_{\min}}{2 C_{\Delta}^2 n^3}.$$ 
\par In order to prove (\ref{Lemma step-length relation 2}), we will use (\ref{Lemma step-length equation 2}) and the fact that from the neighbourhood conditions we have that $x^i_k z^i_k \geq \gamma_{\mu} \mu_k$. In particular, we obtain:
\begin{equation*}
\begin{split}
(x^i_k + \alpha \Delta x^i_k)(z^i_k + \alpha \Delta z^i_k) \geq &\ (1-\alpha)x^i_k z^i_k + \alpha \sigma_k \mu_k - \alpha^2 C_{\Delta}^2 n^4 \mu_k \\
\geq &\ \gamma_{\mu}(1-\alpha)\mu_k + \alpha \sigma_k \mu_k - \alpha^2 C_{\Delta}^2 n^4 \mu_k.
\end{split}
\end{equation*}
\noindent By combining all the previous, we get:
\begin{equation*}
\begin{split}
(x^i_k + \alpha \Delta x^i_k)(z^i_k + \alpha \Delta z^i_k) - \frac{\gamma_{\mu}}{n}(x_k + \alpha \Delta x_k)^T(z_k + \alpha \Delta z_k) \geq \\
\alpha \sigma_k (1-\gamma_{\mu})\mu_k - (1 + \frac{\gamma_{\mu}}{n})\alpha^2 C_{\Delta}^2 n^4\mu_k \geq \\ \alpha\sigma_{\min}(1-\gamma_{\mu})\mu_k - 2\alpha^2 C_{\Delta}^2 n^4\mu_k.
\end{split}
\end{equation*}
\noindent In turn, the most-right hand side of the previous relation will be non-negative for every $\alpha$ satisfying:
$$\alpha \leq \frac{\sigma_{\min}(1-\gamma_{\mu})}{2C_{\Delta}^2 n^4}.$$
\par Finally, to prove (\ref{Lemma step-length relation 3}), we set (without loss of generality) $\beta_2 = 0.99$. We know, from Algorithm \ref{Algorithm PMM-IPM}, that $\sigma_{\max} \leq 0.5$. With the previous two remarks in mind, we have:
\begin{equation*}
\begin{split}
\frac{1}{n}(x_k + \alpha \Delta x_k)^T (z_k + \alpha \Delta z_k) - (1-0.01\alpha)\mu_k \leq \\
(1-\alpha)\mu_k + \alpha \sigma_k \mu_k + \alpha^2 \frac{C_{\Delta}^2 n^4}{n}\mu_k - (1-0.01 \alpha)\mu_k \leq \\
-0.99\alpha \mu_k + 0.5\alpha \mu_k + \alpha^2 \frac{C_{\Delta}^2 n^4}{n} \mu_k =\\
-0.49\alpha \mu_k +\alpha^2\frac{C_{\Delta}^2 n^4}{n}\mu_k.
\end{split}
\end{equation*}
\noindent The last term will be non-positive for every $\alpha$ satisfying:
$$\alpha \leq \frac{0.49 }{C_{\Delta}^2 n^3}.$$
\par By combining all the previous bounds on the step-length, we have that \eqref{Lemma step-length relation 1}-\eqref{Lemma step-length relation 3} will hold for every $\alpha \in (0,\alpha^*)$, where:
\begin{equation} \label{Lemma step-length bound on step-length}
\alpha^* = \min\big\{ \frac{\sigma_{\min}}{2 C_{\Delta}^2 n^3},\ \frac{\sigma_{\min}(1-\gamma_{\mu})}{2C_{\Delta}^2 n^4},\ \frac{0.49 }{C_{\Delta}^2 n^3},\ 1\big\}.
\end{equation}
\par Next, we would like to find the maximum $\bar{\alpha} \in (0,\alpha^*]$, such that: 
$$(x_k(\alpha),y_k(\alpha),z_k(\alpha)) \in \mathcal{N}_{\mu_k(\alpha)}(\zeta_k,\lambda_k),\ \forall\ \alpha \in (0,\bar{\alpha}],$$
\noindent where $\mu_k(\alpha) = \frac{x_k(\alpha)^T z_k(\alpha)}{n}$ and:
$$(x_k(\alpha),y_k(\alpha),z_k(\alpha)) = (x_k + \alpha \Delta x_k,y_k + \alpha \Delta y_k,z_k + \alpha \Delta z_k).$$
\noindent Let:
$$\tilde{r}_p(\alpha) = Ax_k(\alpha)+ \mu_k(\alpha)(y_k(\alpha)-\lambda_k) - \big(b + \frac{\mu_k(\alpha)}{\mu_0}\bar{b}\big),$$
\noindent and
$$\tilde{r}_d(\alpha) = -Qx_k(\alpha) + A^Ty_k(\alpha) + z_k(\alpha) - \mu_k(\alpha)(x_k(\alpha)- \zeta_k) - \big(c + \frac{\mu_k(\alpha)}{\mu_0}\bar{c}\big).$$
\noindent In other words, we need to find the maximum $\bar{\alpha} \in (0,\alpha^*]$, such that:
\begin{equation} \label{Step-length neighbourhood conditions}
\|\tilde{r}_p(\alpha),\tilde{r}_d(\alpha)\|_2 \leq C_N \frac{\mu_k(\alpha)}{\mu_0},\ \ \|\tilde{r}_p(\alpha),\tilde{r}_d(\alpha)\|_{\mathcal{A}} \leq \gamma_{\mathcal{A}} \rho\frac{\mu_k(\alpha)}{\mu_0} ,\ \forall\ \alpha \in (0,\bar{\alpha}].
\end{equation}
\noindent If the latter two conditions hold, then $(x_k(\alpha),y_k(\alpha),z_k(\alpha)) \in \mathcal{N}_{\mu_k(\alpha)}(\zeta_k,\lambda_k),\ \forall\ \alpha \in (0,\bar{\alpha}]$. Then, if Algorithm \ref{Algorithm PMM-IPM} updates $\zeta_k,\ \lambda_k$, it does so only when similar conditions (as in \eqref{Step-length neighbourhood conditions}) hold for the new parameters. If the parameters are not updated, then the new iterate lies in the desired neighbourhood because of \eqref{Step-length neighbourhood conditions}, alongside \eqref{Lemma step-length relation 1}-\eqref{Lemma step-length relation 3}.
\par We start by rearranging $\tilde{r}_p(\alpha)$. Specifically, we have that:\begin{equation*} 
\begin{split}
\tilde{r}_p(\alpha) = &\ A(x_k + \alpha \Delta x_k) +\big(\mu_k + \alpha(\sigma_k-1)\mu_k + \alpha^2\frac{\Delta x_k^T \Delta z_k}{n}\big)\big((y_k + \alpha \Delta y_k -\lambda_k)-\frac{\bar{b}}{\mu_0} \big) -b \\
= &\ \big(Ax_k +\mu_k (y_k -\lambda_k)-b -\frac{\mu_k}{\mu_0}\bar{b}\big) + \alpha(A \Delta x_k + \mu_k \Delta y_k)\ + \\
&\ +\ \big(\alpha(\sigma_k-1)\mu_k + \alpha^2\frac{\Delta x_k^T \Delta z_k}{n}\big)\big((y_k - \lambda_k + \alpha \Delta y_k) - \frac{\bar{b}}{\mu_0}\big)\\
= &\ \frac{\mu_k}{\mu_0}\tilde{b}_k + \alpha\bigg(b- Ax_k - \sigma_k\mu_k\big((y_k-\lambda_k)-\frac{\bar{b}}{\mu_0} \big) + \mu_k \big((y_k-\lambda_k)-\frac{\bar{b}}{\mu_0} \big)\ - \\
 &\ -\ \mu_k \big((y_k-\lambda_k)-\frac{\bar{b}}{\mu_0} \big) \bigg) + \big(\alpha(\sigma_k-1)\mu_k + \alpha^2\frac{\Delta x_k^T \Delta z_k}{n}\big)\big((y_k - \lambda_k + \alpha \Delta y_k) - \frac{\bar{b}}{\mu_0}\big),
\end{split}
\end{equation*}
\noindent where we used that $\mu_k(\alpha) = \big(\mu_k + \alpha(\sigma_k-1)\mu_k + \alpha^2 \frac{\Delta x_k^T \Delta z_k}{n}\big)$, which can be derived from \eqref{Lemma step-length equation 1}, as well as the neighbourhood conditions \eqref{Small neighbourhood}, and the second block equation of the Newton system \eqref{Newton System}. By using again the neighbourhood conditions, and then by deleting the opposite terms in the previous equation, we obtain:\begin{equation}\label{primal infeasibility formula}
\begin{split}
\tilde{r}_p(\alpha) = &\ (1-\alpha)\frac{\mu_k}{\mu_0}\tilde{b}_k + \alpha^2(\sigma_k - 1)\mu_k \Delta y_k + \alpha^2\frac{\Delta x_k^T \Delta z_k}{n}\big(y_k - \lambda_k + \alpha \Delta y_k - \frac{\bar{b}}{\mu_0} \big).
\end{split}
\end{equation}
\noindent Similarly, we can show that:
\begin{equation}\label{dual infeasibility formula}
\tilde{r}_d(\alpha) = (1-\alpha)\frac{\mu_k}{\mu_0}\tilde{c}_k - \alpha^2(\sigma_k-1)\mu_k \Delta x_k - \alpha^2\frac{\Delta x_k^T \Delta z_k}{n}\big(x_k - \zeta_k + \alpha \Delta x_k + \frac{\bar{c}}{\mu_0} \big).
\end{equation}
\par Define the following two quantities:
\begin{equation} \label{step-length, auxiliary constants}
\begin{split}
\xi_2 = &\ \mu_k \|(\Delta y_k,\Delta x_k)\|_2 + C_{\Delta}^2n^3 \mu_{k}\bigg(\|(y_k - \lambda_k,x_k-\zeta_k)\|_2 + \alpha^* \|(\Delta y_k,\Delta x_k)\|_2 + \bigg\|\bigg(\frac{\bar{b}}{\mu_0},\frac{\bar{c}}{\mu_0}\bigg)\bigg\|_2\bigg),\\
\xi_{\mathcal{A}} = &\ \mu_k \|(\Delta y_k,\Delta x_k)\|_{\mathcal{A}} + C_{\Delta}^2n^3 \mu_{k}\bigg(\|(y_k - \lambda_k,x_k-\zeta_k)\|_{\mathcal{A}} + \alpha^* \|(\Delta y_k,\Delta x_k)\|_{\mathcal{A}} + \bigg\|\bigg(\frac{\bar{b}}{\mu_0},\frac{\bar{c}}{\mu_0}\bigg)\bigg\|_{\mathcal{A}}\bigg),
\end{split}
\end{equation}
\noindent where $\alpha^*$ is given by \eqref{Lemma step-length bound on step-length}. Using the definition of the starting point in \eqref{starting point}, as well as results in Lemmas \ref{Lemma boundedness of x z}, \ref{Lemma boundedness Dx Dz}, we can observe that $\xi_2 = O(n^4 \mu_k)$. On the other hand, using Assumption \ref{Assumption 2}, we know that for every pair $(r_1,r_2) \in \mathbb{R}^{m+n}$, if $\|(r_1,r_2)\|_2 = \Theta(f(n))$, where $f(\cdot)$ is a positive polynomial function of $n$, then $\|(r_1,r_2)\|_{\mathcal{A}} = \Theta(f(n))$. In other words, we have that $\xi_{\mathcal{A}} = O(n^4 \mu_k)$. Using the quantities in \eqref{step-length, auxiliary constants}, equations \eqref{primal infeasibility formula}, \eqref{dual infeasibility formula}, as well as the neighbourhood conditions, we have that:
\begin{equation*}
\begin{split}
\|\tilde{r}_p(\alpha),\tilde{r}_d(\alpha)\|_2 \leq &\ (1-\alpha)C_N \frac{\mu_k}{\mu_0} + \alpha^2 \mu_k \xi_2,\\
\|\tilde{r}_p(\alpha),\tilde{r}_d(\alpha)\|_A \leq &\  (1-\alpha)\gamma_{\mathcal{A}}\rho \frac{\mu_k}{\mu_0} + \alpha^2 \mu_k \xi_{\mathcal{A}},
\end{split}
\end{equation*}
\noindent for all $\alpha \in (0,\alpha^*]$, where $\alpha^*$ is given by \eqref{Lemma step-length bound on step-length}. On the other hand, we know from \eqref{Lemma step-length relation 1}, that: 
$$\mu_k(\alpha) \geq (1-\alpha(1-\beta_1))\mu_k,\ \forall\ \alpha \in (0,\alpha^*].$$
\noindent By combining the last two inequalities, we get that:
\begin{equation*}
\begin{split}
\|\tilde{r}_p(\alpha),\tilde{r}_d(\alpha)\|_2 \leq \frac{\mu_k(\alpha)}{\mu_0} C_N,\ \forall\ \alpha \in \bigg(0, \min\big\{\alpha^*,\frac{\beta_1 C_N}{\xi_2 \mu_0}\big\}\bigg].
\end{split}
\end{equation*}
\noindent Similarly, 
\begin{equation*}
\begin{split}
\|\tilde{r}_p(\alpha),\tilde{r}_d(\alpha)\|_{\mathcal{A}} \leq \frac{\mu_k(\alpha)}{\mu_0} \gamma_{\mathcal{A}} \rho,\ \forall\ \alpha \in \bigg(0, \min \big\{\alpha^*,\frac{\beta_1 \gamma_{\mathcal{A}} \rho}{\xi_{\mathcal{A}} \mu_0}\big\}\bigg].
\end{split}
\end{equation*}
\par Hence, we have that:
\begin{equation} \label{Lemma step-length, STEPLENGTH BOUND}
\bar{\alpha} = \min \bigg\{\alpha^*,\frac{\beta_1 C_N}{\xi_2 \mu_0}, \frac{\beta_1 \gamma_{\mathcal{A}} \rho}{\xi_{\mathcal{A}} \mu_0} \bigg\},
\end{equation}
\noindent where $\beta_1 = \frac{\sigma_{\min}}{2}$. Since $\bar{\alpha} = \Omega\big(\frac{1}{n^4}\big)$, we know that there must exist a constant $\bar{\kappa} > 0$, independent of $n$, $m$ and of the iteration $k$, such that: $\bar{\alpha} \geq \frac{\kappa}{n^4}$, for all $k \geq 0$, and this completes the proof.
\end{proof}
\noindent The following Theorem summarizes our results.

\begin{theorem} \label{Theorem mu convergence}
Given Assumptions \textnormal{\ref{Assumption 1}, \ref{Assumption 2}}, the sequence $\{\mu_k\}$ generated by Algorithm \textnormal{\ref{Algorithm PMM-IPM}} converges Q-linearly to zero, and the sequences of regularized residual norms 
$$\{\|Ax_k + \mu_k (y_k-\lambda_k) - b-\frac{\mu_k}{\mu_0}\bar{b}\|_2\}\ \text{and}\  \{\|-Qx_k + A^T y_k + z_k - \mu_k(x_k - \zeta_k) - c - \frac{\mu_k}{\mu_0}\bar{c}\|_2\}$$
converge R-linearly to zero.
\end{theorem}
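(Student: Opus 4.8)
The plan is to obtain this theorem essentially as a corollary of Lemma~\ref{Lemma step-length} together with the update rules of Algorithm~\ref{Algorithm PMM-IPM}, once I have established by induction on $k$ that every IP-PMM iterate lies in the appropriate neighbourhood. The base case is already recorded after \eqref{Small neighbourhood}: the starting point \eqref{starting point} belongs to $\mathcal{N}_{\mu_0}(\zeta_0,\lambda_0)$ with $(\tilde b_0,\tilde c_0)=(0,0)$. For the inductive step I would assume $(x_k,y_k,z_k)\in\mathcal{N}_{\mu_k}(\zeta_k,\lambda_k)$ and invoke Lemma~\ref{Lemma step-length}: it produces a step-length $\bar\alpha$ with $\bar\alpha\geq\bar\kappa/n^4$, where $\bar\kappa>0$ is independent of $n$, $m$ and $k$, such that for every $\alpha\in(0,\bar\alpha]$ the trial point satisfies the requirement $\mu_k(\alpha)\leq(1-0.01\alpha)\mu_k$ and lies in $\mathcal{N}_{\mu_{k+1}}(\zeta_{k+1},\lambda_{k+1})$, with $\zeta_{k+1},\lambda_{k+1}$ updated exactly as in the algorithm. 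Since Algorithm~\ref{Algorithm PMM-IPM} selects $\alpha_k$ as the \emph{largest} admissible $\alpha\in(0,1]$, this yields $\alpha_k\geq\bar\kappa/n^4$ and $(x_{k+1},y_{k+1},z_{k+1})\in\mathcal{N}_{\mu_{k+1}}(\zeta_{k+1},\lambda_{k+1})$, closing the induction.

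With the induction in hand, the Q-linear decrease of $\mu_k$ is immediate: the step-length rule forces $\mu_{k+1}=\tfrac{1}{n}x_{k+1}^Tz_{k+1}\leq(1-0.01\alpha_k)\mu_k$, and combining this with the uniform lower bound $\alpha_k\geq\bar\kappa/n^4$ gives
\begin{equation*}
\mu_{k+1}\leq\Big(1-\tfrac{0.01\,\bar\kappa}{n^4}\Big)\mu_k\eqqcolon\beta\,\mu_k,\qquad \beta\in(0,1),
\end{equation*}
so that $\mu_k\leq\beta^k\mu_0\to 0$, which is exactly Q-linear convergence to zero.

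For the regularized residuals I would use the membership $(x_k,y_k,z_k)\in\tilde{\mathcal{C}}_{\mu_k}(\zeta_k,\lambda_k)$, which is part of the neighbourhood condition and states precisely that
\begin{equation*}
Ax_k+\mu_k(y_k-\lambda_k)-b-\tfrac{\mu_k}{\mu_0}\bar b=\tfrac{\mu_k}{\mu_0}\tilde b_k,\qquad -Qx_k+A^Ty_k+z_k-\mu_k(x_k-\zeta_k)-c-\tfrac{\mu_k}{\mu_0}\bar c=\tfrac{\mu_k}{\mu_0}\tilde c_k,
\end{equation*}
together with $\|(\tilde b_k,\tilde c_k)\|_2\leq C_N$. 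Hence each of the two regularized residual norms in the statement is bounded above by $\tfrac{C_N}{\mu_0}\mu_k\leq C_N\beta^k$, i.e.\ it is dominated by a Q-linearly convergent sequence, which is precisely the definition of R-linear convergence.

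The only part that requires genuine care — and the place I would slow down — is the bookkeeping in the induction: one must verify that the object delivered by the conclusion of Lemma~\ref{Lemma step-length}, namely membership in $\mathcal{N}_{\mu_{k+1}}(\zeta_{k+1},\lambda_{k+1})$ with the \emph{updated} proximal estimates, is exactly the hypothesis needed to re-apply the lemma at iteration $k+1$, and that the constant $\bar\kappa$ (hence $\beta$) can be chosen uniformly in $k$. Both of these are already guaranteed by the uniform-in-$k$ statements of Lemmas~\ref{Lemma boundedness of x z}, \ref{Lemma boundedness Dx Dz} and \ref{Lemma step-length}, so beyond this consistency check the theorem follows with no further estimation.
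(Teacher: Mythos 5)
Your proposal is correct and follows essentially the same route as the paper: Q-linear decrease of $\mu_k$ from the bound $\mu_{k+1}\leq(1-0.01\alpha_k)\mu_k$ together with the uniform lower bound $\alpha_k\geq\bar\kappa/n^4$ from Lemma~\ref{Lemma step-length}, and R-linear convergence of the regularized residuals directly from the neighbourhood bound $\|(\tilde b_k,\tilde c_k)\|_2\leq C_N$. The only difference is cosmetic: you spell out the induction establishing that every iterate stays in $\mathcal{N}_{\mu_k}(\zeta_k,\lambda_k)$, which the paper leaves implicit in the conclusion of Lemma~\ref{Lemma step-length} and the algorithm's step-length rule.
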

\begin{proof}
\noindent From (\ref{Lemma step-length relation 3}) we have that:
$$ \mu_{k+1} \leq (1-0.01\alpha_k)\mu_k,$$
\noindent while, from (\ref{Lemma step-length, STEPLENGTH BOUND}), we know that $\forall\ k \geq 0$, $\exists\ \bar{\alpha} \geq \frac{\bar{\kappa}}{n^4}$ such that: $\alpha_k \geq \bar{\alpha}$. Hence, we can easily see that $\mu_k \rightarrow 0$. On the other hand, from the neighbourhood conditions, we know that for all $k \geq 0$:
$$ \|Ax_k + \mu_k (y_k-\lambda_k) - b - \frac{\mu_k}{\mu_0}\bar{b}\|_2 \leq C_N \frac{\mu_k}{\mu_0}$$
\noindent and
$$\|-Qx_k + A^T y_k + z_k - \mu_k(x_k - \zeta_k) - c- \frac{\mu_k}{\mu_0}\bar{c}\|_2 \leq C_N \frac{\mu_k}{\mu_0}.$$
\noindent This completes the proof. 
\end{proof}

\begin{theorem} \label{Theorem complexity}
\noindent Let $\epsilon \in (0,1)$ be a given error tolerance. Choose a starting point for Algorithm \textnormal{\ref{Algorithm PMM-IPM}} as in \eqref{starting point}, such that $\mu_0 \leq \frac{C}{\epsilon^{\omega}}$ for some positive constants $C,\ \omega$. Given Assumptions \textnormal{\ref{Assumption 1}} and \textnormal{\ref{Assumption 2}}, there exists an index $K$ with:
$$K = O(n^4|\log\big(\frac{1}{\epsilon}\big)|\big)$$
\noindent such that the iterates $\{w_k\} = \{(x_k^T,y_k^T,z_k^T)^T\}$ generated from Algorithm \textnormal{\ref{Algorithm PMM-IPM}} satisfy:
$$\mu_k \leq \epsilon,\ \ \ \ \forall\ k\geq K.$$  
\end{theorem}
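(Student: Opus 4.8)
The plan is to reduce the statement to the Q-linear decrease of $\mu_k$ established in Theorem \ref{Theorem mu convergence} and to track the resulting geometric rate explicitly. First I would recall from Lemma \ref{Lemma step-length} (in particular the bound \eqref{Lemma step-length, STEPLENGTH BOUND}) that there is a constant $\bar\kappa>0$, independent of $n$, $m$ and the iteration index, such that the step-length satisfies $\alpha_k \geq \bar\alpha \geq \frac{\bar\kappa}{n^4}$ for every $k\geq 0$. Combining this with inequality \eqref{Lemma step-length relation 3} (with $\beta_2 = 0.99$), which gives $\mu_{k+1}\leq (1-0.01\alpha_k)\mu_k$, yields the uniform contraction
\begin{equation*}
\mu_{k+1} \leq \left(1 - \frac{0.01\,\bar\kappa}{n^4}\right)\mu_k,\qquad \text{for all } k\geq 0.
\end{equation*}

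Iterating this bound from the starting point gives $\mu_k \leq \left(1 - \frac{0.01\bar\kappa}{n^4}\right)^{k}\mu_0$ for all $k\geq 0$. To guarantee $\mu_k\leq\epsilon$ it therefore suffices to require $\left(1 - \frac{0.01\bar\kappa}{n^4}\right)^{k}\mu_0 \leq \epsilon$, i.e. $k\,\bigl|\log\bigl(1-\tfrac{0.01\bar\kappa}{n^4}\bigr)\bigr| \geq \log\bigl(\tfrac{\mu_0}{\epsilon}\bigr)$. Using the elementary inequality $|\log(1-t)|\geq t$ for $t\in(0,1)$, it is enough to take $k$ with $\frac{0.01\bar\kappa}{n^4}\,k \geq \log\bigl(\tfrac{\mu_0}{\epsilon}\bigr)$, hence any
\begin{equation*}
K = \left\lceil \frac{n^4}{0.01\,\bar\kappa}\,\log\!\left(\frac{\mu_0}{\epsilon}\right)\right\rceil
\end{equation*}
works, and then $\mu_k\leq\epsilon$ for all $k\geq K$ by monotonicity of $\{\mu_k\}$.

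It remains to show this $K$ is $O\!\bigl(n^4\,|\log(1/\epsilon)|\bigr)$. Here the hypothesis $\mu_0\leq C/\epsilon^{\omega}$ enters: $\log(\mu_0/\epsilon) \leq \log C + \omega\log(1/\epsilon) + \log(1/\epsilon) = \log C + (\omega+1)\log(1/\epsilon)$. Since $C$ and $\omega$ are constants independent of $n$ and $m$, and since $\epsilon\in(0,1)$ makes $\log(1/\epsilon)>0$, we get $\log(\mu_0/\epsilon) = O\bigl(|\log(1/\epsilon)|\bigr)$, and therefore $K = O\bigl(n^4\,|\log(1/\epsilon)|\bigr)$, as claimed. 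I do not anticipate a genuine obstacle here: the only mild point requiring care is the bookkeeping that all constants ($\bar\kappa$, $C$, $\omega$, and the absorbed numerical factors) are independent of the problem dimensions — which is exactly what Lemma \ref{Lemma step-length} and the theorem's hypotheses supply — so that the dependence on $n$ is confined to the explicit $n^4$ factor coming from the worst-case step-length bound.
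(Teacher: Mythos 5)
Your proposal is correct and follows essentially the same route as the paper's proof: both use the worst-case step-length bound $\bar\alpha \geq \bar\kappa/n^4$ from Lemma \ref{Lemma step-length} together with \eqref{Lemma step-length relation 3} to get a uniform geometric contraction of $\mu_k$, then take logarithms, apply $\log(1-t)\leq -t$, and invoke $\mu_0 \leq C/\epsilon^{\omega}$ to absorb the starting value into the $O(n^4|\log(1/\epsilon)|)$ bound. No gaps.
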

\begin{proof}
\noindent The proof follows the developments in \cite{book_4,paper_30} and is only provided here for completeness. Without loss of generality, we can chose $\sigma_{\max} \leq 0.5$ and then from Lemma \ref{Lemma step-length}, we know that there is a constant $\bar{\kappa}$ independent of $n$ such that: $\bar{a} \geq \frac{\bar{\kappa}}{n^4}$, where $\bar{a}$ is the worst-case step-length. Given the latter, we know that the new iterate lies in the neighbourhood $\mathcal{N}_{\mu_{k+1}}(\zeta_{k+1},\lambda_{k+1})$ defined in (\ref{Small neighbourhood}). We also know, from (\ref{Lemma step-length relation 3}), that:
$$\mu_{k+1} \leq (1 - 0.01\bar{a})\mu_k \leq (1-0.01\frac{\bar{\kappa}}{n^4})\mu_k,\ \ \ \  k = 0,1,2,\ldots$$
\noindent By taking logarithms on both sides in the previous inequality, we get:
$$\log (\mu_{k+1}) \leq \log (1 - \frac{\tilde{\kappa}}{n^4}) + \log (\mu_k),$$
\noindent where $\tilde{\kappa} = 0.01\bar{\kappa}$. By applying repeatedly the previous formula, and using the fact that $\mu_0 \leq \frac{C}{\epsilon^{\omega}}$, we have:
$$\log(\mu_k) \leq k \log(1 - \frac{\tilde{\kappa}}{n^4}) + \log(\mu_0) \leq k \log(1 - \frac{\tilde{\kappa}}{n^4}) + \omega \log(\frac{1}{\epsilon}) + \log(C).$$
\noindent We use the fact that: $\log(1+\beta) \leq \beta,\ \ \forall\ \beta > -1$ to get:
$$\log(\mu_k) \leq k(-\frac{\tilde{\kappa}}{n^4}) + \omega \log(\frac{1}{\epsilon}) +\log(C).$$
\noindent Hence, convergence is attained if:
$$k(-\frac{\tilde{\kappa}}{n^4}) + \omega \log(\frac{1}{\epsilon}) +\log(C) \leq \log(\epsilon).$$
\noindent The latter holds for all $k$ satisfying:
$$k \geq K = \frac{n^4}{\tilde{\kappa}}((1+\omega)\log(\frac{1}{\epsilon})+\log(C)),$$
\noindent which completes the proof. 
\end{proof}
\par Finally, we present the global convergence guarantee of Algorithm \ref{Algorithm PMM-IPM}.
\begin{theorem} \label{Theorem convergence for the feasible case}
Suppose that Algorithm \textnormal{\ref{Algorithm PMM-IPM}} terminates when a limit point is reached. Then, if Assumptions \textnormal{\ref{Assumption 1}} and \textnormal{\ref{Assumption 2}} hold, every limit point of $\{(x_k,y_k,z_k)\}$ determines a primal-dual solution of the non-regularized pair \textnormal{(\ref{non-regularized primal})-(\ref{non-regularized dual})}.
\end{theorem}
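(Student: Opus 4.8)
The plan is to show that any limit point of $\{(x_k,y_k,z_k)\}$ satisfies the KKT system \eqref{non-regularized FOC} of \eqref{non-regularized primal}--\eqref{non-regularized dual}, by passing to the limit in the defining relations of the neighbourhood $\mathcal{N}_{\mu_k}(\zeta_k,\lambda_k)$ and using that $\mu_k\to 0$ annihilates every perturbation term that distinguishes the regularized residuals from the true ones. First I would fix an arbitrary limit point $(x_\infty,y_\infty,z_\infty)$ and a subsequence $\mathcal{K}\subseteq\mathbb{N}$ along which $(x_k,y_k,z_k)\to(x_\infty,y_\infty,z_\infty)$; such a subsequence exists because, for a fixed instance, Lemma \ref{Lemma boundedness of x z} gives $\|(x_k,y_k,z_k)\|_2 = O(n)$, i.e. the iterates are bounded. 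I would then collect the following facts: $\mu_k\to 0$ on the whole sequence by Theorem \ref{Theorem mu convergence}; the proximal estimates satisfy $\|(\zeta_k,\lambda_k)\|_2 = O(\sqrt n)$ by Lemma \ref{Lemma-boundedness of optimal solutions for sub-problems}, hence are bounded; $\tilde b_k,\tilde c_k$ are bounded in $2$-norm by $C_N$ by the definition of the neighbourhood in \eqref{Small neighbourhood}; and $\bar b,\bar c$ are fixed vectors from \eqref{starting point}. Together these imply $\mu_k(y_k-\lambda_k)\to 0$, $\mu_k(x_k-\zeta_k)\to 0$, $\tfrac{\mu_k}{\mu_0}(\bar b+\tilde b_k)\to 0$ and $\tfrac{\mu_k}{\mu_0}(\bar c+\tilde c_k)\to 0$.

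Next, since every iterate produced by Algorithm \ref{Algorithm PMM-IPM} lies in $\mathcal{N}_{\mu_k}(\zeta_k,\lambda_k)$, it obeys
\[
Ax_k + \mu_k(y_k-\lambda_k) = b + \tfrac{\mu_k}{\mu_0}(\bar b+\tilde b_k),\qquad
-Qx_k + A^Ty_k + z_k - \mu_k(x_k-\zeta_k) = c + \tfrac{\mu_k}{\mu_0}(\bar c+\tilde c_k).
\]
Taking $k\to\infty$ along $\mathcal{K}$ and invoking the limits above yields $Ax_\infty = b$ and $-Qx_\infty + A^Ty_\infty + z_\infty = c$, which are the first two blocks of \eqref{non-regularized FOC}. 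Since $(x_k,z_k)>(0,0)$ for all $k$, the limit satisfies $(x_\infty,z_\infty)\geq(0,0)$. For complementarity I would use $\mu_k = \tfrac{x_k^Tz_k}{n}$ and $\mu_k\to 0$, so that $x_k^Tz_k\to 0$; combined with $x_k\geq 0$ and $z_k\geq 0$ this forces $x_\infty^Tz_\infty = 0$, and together with nonnegativity this gives $X_\infty Z_\infty e_n = 0$ componentwise. Hence $(x_\infty,y_\infty,z_\infty)$ solves $F(w)=0$ with $(x_\infty,z_\infty)\geq 0$, so by convexity it is a primal-dual optimal solution of \eqref{non-regularized primal}--\eqref{non-regularized dual} (cf. \cite[Prop. 2.3.4]{book_2}); as the limit point was arbitrary, the theorem follows.

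This is essentially a routine limiting argument, so I do not expect a genuine obstacle. The only point requiring care is verifying that all terms carrying a factor of $\mu_k$ — the proximal corrections $\mu_k(y_k-\lambda_k)$, $\mu_k(x_k-\zeta_k)$ and the scaled infeasibilities $\tfrac{\mu_k}{\mu_0}(\bar b+\tilde b_k)$, $\tfrac{\mu_k}{\mu_0}(\bar c+\tilde c_k)$ — actually vanish in the limit; this is precisely what the uniform boundedness provided by Lemmas \ref{Lemma-boundedness of optimal solutions for sub-problems} and \ref{Lemma boundedness of x z}, the neighbourhood bound $\|(\tilde b_k,\tilde c_k)\|_2\leq C_N$, and $\mu_k\to 0$ deliver. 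Once these are in place, primal feasibility, dual feasibility, nonnegativity and complementarity each follow by a one-line passage to the limit.
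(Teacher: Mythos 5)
Your proposal is correct and follows essentially the same route as the paper: use Theorem \ref{Theorem mu convergence} to get $\mu_k \to 0$, the boundedness results (Lemmas \ref{Lemma-boundedness of optimal solutions for sub-problems} and \ref{Lemma boundedness of x z}) to kill the $\mu_k$-weighted perturbation terms in the neighbourhood relations, and $\mu_k = x_k^T z_k/n$ for complementarity, so that any limit point satisfies \eqref{non-regularized FOC}. Your write-up is just a slightly more explicit version of the paper's argument (spelling out the vanishing of each perturbation term and the componentwise complementarity), with no substantive difference.
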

\begin{proof}
\noindent From Theorem \ref{Theorem mu convergence}, we know that $\{\mu_k\} \rightarrow 0$, and hence, there exists a sub-sequence $\mathcal{K} \subseteq \mathbb{N}$, such that:
$$\{ Ax_k + \mu_k (y_k - \lambda_k) -b -\frac{\mu_k}{\mu_0}\bar{b}\}_{\mathcal{K}} \rightarrow 0,\ \{-Qx_k + A^T y_k + z_k - \mu_k (x_k - \zeta_k)-c-\frac{\mu_k}{\mu_0}\bar{c}\}_{\mathcal{K}} \rightarrow 0.$$
\noindent However, since Assumptions \ref{Assumption 1} and \ref{Assumption 2} hold, we know from Lemma \ref{Lemma boundedness of x z} that $\{(x_k,y_k,z_k)\}$ is a bounded sequence. Hence, we obtain that:
$$\{ Ax_k - b\}_{\mathcal{K}} \rightarrow 0,\ \{-Qx_k + A^Ty_k +z_k -c\}_{\mathcal{K}} \rightarrow 0.$$
\noindent One can readily observe that the limit point of the algorithm satisfies the conditions given in (\ref{non-regularized FOC}), since $\mu_k = \frac{x_k^Tz_k}{n}$. 
\end{proof}

\section{Infeasible Problems} \label{section Infeasible problems}
\par Let us now drop Assumptions \ref{Assumption 1}, \ref{Assumption 2}, in order to analyze the behaviour of the algorithm in the case where an infeasible problem is tackled. Let us employ the following two premises:
\begin{premise} \label{Premise 1}
During the iterations of Algorithm \textnormal{\ref{Algorithm PMM-IPM}}, the sequences $\{\|y_k - \lambda_k\|_2\}$ and $\{\|x_k - \zeta_k\|_2\}$, remain bounded.
\end{premise}

\begin{premise} \label{Premise 2}
There does not exist a primal-dual triple, satisfying the KKT conditions for the primal-dual pair \textnormal{(\ref{non-regularized primal})-(\ref{non-regularized dual})}.
\end{premise}
\par The following analysis is based on the developments in \cite{paper_2} and \cite{paper_15}. However, in these papers such an analysis is proposed in order to derive convergence of an IPM, while here, we use it as a tool in order to construct a reliable and implementable infeasibility detection mechanism. In what follows, we show that Premises \ref{Premise 1} and \ref{Premise 2} are contradictory. In other words, if Premise \ref{Premise 2} holds (which means that our problem is infeasible), then we will show that Premise \ref{Premise 1} cannot hold, and hence the negation of Premise \ref{Premise 1} is a necessary condition for infeasibility.

\begin{lemma} \label{Lemma infeasibility bounded Newton}
Given Premise \textnormal{\ref{Premise 1}}, and by assuming that $x_k^Tz_k > \epsilon$, for some $\epsilon >0$, for all iterations $k$ of Algorithm \textnormal{\ref{Algorithm PMM-IPM}}, the Newton direction produced by \textnormal{(\ref{Newton System})} is uniformly bounded by a constant dependent only on $n$. 
\end{lemma}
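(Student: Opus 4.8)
The plan is to exploit the regularization already present in the Newton matrix \eqref{Newton System}, together with Premise \ref{Premise 1} and the standing hypothesis $x_k^Tz_k > \epsilon$. First I would pin down the barrier parameter: $\mu_k = \frac{x_k^Tz_k}{n} > \frac{\epsilon}{n}$, while the step-length rule of Algorithm \ref{Algorithm PMM-IPM} enforces $\mu_{k+1} \le (1-0.01\alpha_k)\mu_k$, so $\mu_k \le \mu_0$ for every $k$. Hence $\mu_k$ stays in the compact interval $(\epsilon/n,\mu_0]$, independently of $k$, which is the only thing the assumption $x_k^Tz_k>\epsilon$ is used for.

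Next I would rewrite the right-hand side of \eqref{Newton System}. Since every iterate satisfies $(x_k,y_k,z_k)\in\mathcal{N}_{\mu_k}(\zeta_k,\lambda_k)$, the two linear identities defining $\tilde{\mathcal{C}}_{\mu_k}(\zeta_k,\lambda_k)$ hold; substituting $Ax_k$ from the first and $-Qx_k+A^Ty_k+z_k$ from the second into the first two blocks of the right-hand side of \eqref{Newton System}, all bare occurrences of $x_k,y_k,z_k$ cancel, leaving effective residuals
\[
\xi_d=(\sigma_k-1)\!\left(\tfrac{\mu_k}{\mu_0}\bar c+\mu_k(x_k-\zeta_k)\right)-\tfrac{\mu_k}{\mu_0}\tilde c_k,\qquad
\xi_p=(\sigma_k-1)\!\left(\tfrac{\mu_k}{\mu_0}\bar b-\mu_k(y_k-\lambda_k)\right)-\tfrac{\mu_k}{\mu_0}\tilde b_k,
\]
so that the first two Newton blocks read $-(Q+\mu_kI_n)\Delta x_k+A^T\Delta y_k+\Delta z_k=\xi_d$ and $A\Delta x_k+\mu_kI_m\Delta y_k=\xi_p$. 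Using $\mu_k\le\mu_0$, $\|(\tilde b_k,\tilde c_k)\|_2\le C_N$ (neighbourhood condition), the fact that $\bar b,\bar c$ are fixed by \eqref{starting point}, and the boundedness of $\|x_k-\zeta_k\|_2$, $\|y_k-\lambda_k\|_2$ from Premise \ref{Premise 1}, one gets $\|\xi_d\|_2+\|\xi_p\|_2\le M_1$ with $M_1$ independent of $k$. For the complementarity block, with $W_k=(X_kZ_k)^{1/2}$ the scaled residual equals $-(W_ke_n-\sigma_k\mu_kW_k^{-1}e_n)$, and since $x_k^iz_k^i\ge\gamma_\mu\mu_k$ and $\mu_k\le\mu_0$, its norm is at most $\sqrt{n\mu_0}+\sqrt{\mu_0/\gamma_\mu}=:M_c$.

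The core is then an energy estimate using the $\mu_kI$ blocks. Taking the inner product of the first block with $\Delta x_k$ and of the second with $\Delta y_k$ and subtracting, the skew term $\Delta y_k^TA\Delta x_k$ cancels and, using $Q\succeq0$, with $t:=\|(\Delta x_k,\Delta y_k)\|_2$,
\[
\Delta x_k^T\Delta z_k=\Delta x_k^T(Q+\mu_kI_n)\Delta x_k+\mu_k\|\Delta y_k\|_2^2+\Delta x_k^T\xi_d-\Delta y_k^T\xi_p\ \ge\ \mu_k t^2-M_1 t .
\]
On the other hand, expanding $\|D_k^{-1}\Delta x_k+D_k\Delta z_k\|_2^2$ with $D_k^2=X_kZ_k^{-1}$, the cross term equals $2\Delta x_k^T\Delta z_k$ and the left-hand side equals the squared norm of the scaled complementarity residual from the third block, giving the unconditional bound $\Delta x_k^T\Delta z_k\le\tfrac12 M_c^2$. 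Combining the two and using $\mu_k\ge\epsilon/n$ yields $\tfrac{\epsilon}{n}t^2\le\tfrac12 M_c^2+M_1 t$, a quadratic inequality bounding $t$ by a constant depending only on $n$ (and the fixed quantities $\epsilon,\mu_0,\gamma_\mu,C_N,\bar b,\bar c$ and the Premise \ref{Premise 1} bounds). Finally $\Delta z_k=\xi_d+(Q+\mu_kI_n)\Delta x_k-A^T\Delta y_k$ gives $\|\Delta z_k\|_2\le M_1+(\nu_{\max}(Q)+\mu_0)\|\Delta x_k\|_2+\eta_{\max}(A)\|\Delta y_k\|_2$, also uniformly bounded.

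The step I expect to require most care is the cancellation in the right-hand side: one must verify that after substituting the neighbourhood identities nothing unbounded — $x_k$, $y_k$, $z_k$, $X_k^{-1}e_n$, $\Theta_k$, $\Theta_k^{-1}$ — survives in $\xi_d,\xi_p$, only the Premise-\ref{Premise 1}-controlled differences $x_k-\zeta_k$, $y_k-\lambda_k$ and the a priori bounded $\tilde b_k,\tilde c_k,\bar b,\bar c$. This is exactly where the regularization is essential (the residuals carry $\mu_k(x_k-\zeta_k)$, $\mu_k(y_k-\lambda_k)$ rather than bare $x_k$, $y_k$) and where Premise \ref{Premise 1} is indispensable; without either, the Newton right-hand side need not stay bounded, and one cannot bound the $x_k,z_k$ appearing in an unregularized analysis. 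Everything else is routine norm bookkeeping; noting in addition that $\mu_0=\rho^2$ and that $\bar b,\bar c$ are determined by the fixed starting point, all constants depend at most on $n$.
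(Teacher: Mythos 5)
Your proposal is correct, but it takes a genuinely different route from the paper. The paper's proof outsources the linear algebra: it invokes (a variation of) Theorem 1 of \cite{paper_15}, which guarantees that the Jacobian in \eqref{Newton System} is nonsingular with a \emph{uniformly bounded inverse} once $\mu_k>0$, the products $x_k^iz_k^i$ are bounded away from zero (which follows from the neighbourhood condition $x_k^iz_k^i\ge\gamma_\mu\mu_k$ together with $x_k^Tz_k>\epsilon$), and $H_k=\mu_kI+Q+X_k^{-1}Z_k+\frac{1}{\mu_k}A^TA\succ 0$; it then only has to bound the right-hand side, which it does exactly as you do, by rewriting it via the neighbourhood identities so that only $\mu_k(y_k-\lambda_k-\bar b/\mu_0)$, $\mu_k(x_k-\zeta_k+\bar c/\mu_0)$, $\frac{\mu_k}{\mu_0}\tilde b_k$, $\frac{\mu_k}{\mu_0}\tilde c_k$ and the third-block residual survive, all controlled by Premise \ref{Premise 1} and \eqref{Small neighbourhood}. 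You replace the cited bounded-inverse theorem by a self-contained energy estimate: the lower bound $\Delta x_k^T\Delta z_k\ge\mu_k\|(\Delta x_k,\Delta y_k)\|_2^2-M_1\|(\Delta x_k,\Delta y_k)\|_2$ from the two regularized blocks (using $Q\succeq0$ and cancellation of the skew term), against the upper bound $\Delta x_k^T\Delta z_k\le\frac12M_c^2$ from the $W_k^{-1}$-scaled third block, with $\mu_k>\epsilon/n$ supplying the coercivity; this mirrors the technique of Lemma \ref{Lemma boundedness Dx Dz} but cleverly avoids the $\|D_k\|_2$, $\|D_k^{-1}\|_2$ bounds that are unavailable without Assumptions \ref{Assumption 1}--\ref{Assumption 2}. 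What each approach buys: the paper's is shorter and reuses an off-the-shelf result; yours is fully explicit, shows precisely where $\epsilon$ enters the constant, and does not rely on the external reference. Two harmless remarks: your $M_c$ should read $\sqrt{n\mu_0}+\sigma_k\sqrt{n\mu_0/\gamma_\mu}$ (you dropped a $\sqrt{n}$ in the second term, immaterial since the bound may depend on $n$), and your final constants also involve the fixed problem data $\eta_{\max}(A)$, $\nu_{\max}(Q)$, $\bar b$, $\bar c$, $\mu_0$ --- which is consistent with the paper's own proof, since ``dependent only on $n$'' is to be read as uniform in $k$ for fixed problem data.
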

\begin{proof}
\par Let us use a variation of Theorem 1 given in \cite{paper_15}. This theorem states that if the following conditions are satisfied, 
\begin{enumerate}
\item $\mu_k > 0$, 
\item $\exists$ $\ $ $\bar{\epsilon} > 0 \ :\ x_k^i  z_k^i \geq \bar{\epsilon},\ \forall\ i=\{1,2,...,n\}, \forall\ k\geq 0$,
\item and the matrix $H_k = \mu_k I + Q + X_k^{-1}Z_k +\frac{1}{\mu_k}A^T A$ is positive definite,
\end{enumerate}
\noindent then the Jacobian matrix in (\ref{Newton System}) is non-singular and has a uniformly bounded inverse. Note that (1.), (3.) are trivial to verify, based on the our assumption that $x_k^T z_k = n \mu_k > \epsilon$. Condition (2.) follows since we know that our iterates lie in $\mathcal{N}_{\mu_k}(\zeta_k,\lambda_k)$, while we have $x_k^T z_k > \epsilon$, by assumption. Indeed, from the neighbourhood conditions (\ref{Small neighbourhood}), we have that $x_k^i z_k^i \geq \frac{\gamma_{\mu}}{n} x_k^T z_k$. Hence, there exists $\bar{\epsilon} = \frac{\gamma_{\mu} \epsilon}{n} > 0$ such that $x_k^i z_k^i > \bar{\epsilon},\ \forall \ k \geq 0,\ \forall \  i = \{1,\cdots,n\}$. 
\par Finally, we have to show that the right hand side of \eqref{Newton System} is uniformly bounded. To that end, we bound the right-hand side of the second block equation of \eqref{Newton System} as follows:
\begin{equation*}
\begin{split}
 \bigg\|Ax_k + \sigma_k \mu_k (y_k - \lambda_k) - b - \frac{\sigma_k \mu_k}{\mu_0}\bar{b} + \mu_k (y_k - \lambda_k -\frac{\bar{b}}{\mu_0}) -\mu_k (y_k - \lambda_k -\frac{\bar{b}}{\mu_0})\bigg\|_2 \leq \\
 \frac{\mu_k}{\mu_0}\|\tilde{b}_k\|_2 + \mu_k\bigg\|y_k - \lambda_k - \frac{\bar{b}}{\mu_0}\bigg\|_2,
\end{split}
\end{equation*}
\noindent where we used the neighbourhood conditions (\ref{Small neighbourhood}). Boundedness follows from Premise \ref{Premise 1}. A similar reasoning applies for bounding the right-hand side of the first block equation, while the right-hand side of the third block equation is bounded directly from the neighbourhood conditions. Combining the previous completes the proof. 
 \end{proof}
 
\par In the following Lemma, we prove by contradiction that the parameter $\mu_k$ of Algorithm \ref{Algorithm PMM-IPM} converges to zero, given that Premise \ref{Premise 1} holds. The proof is based on the developments in \cite{paper_41,paper_2} and is only partially given here, for ease of presentation.

\begin{lemma} \label{Lemma infeasibility mu to zero}
Given Premise \textnormal{\ref{Premise 1}}, and a sequence $(x_k,y_k,z_k) \in \mathcal{N}_{\mu_k}(\zeta_k,\lambda_k)$ produced by Algorithm \textnormal{\ref{Algorithm PMM-IPM}}, the sequence $\{\mu_k\}$ converges to zero.
\end{lemma}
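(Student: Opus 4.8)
The plan is to argue by contradiction, re-using the step-length machinery of Lemma~\ref{Lemma step-length} but with the uniform bound of Lemma~\ref{Lemma infeasibility bounded Newton} standing in for the dimension-dependent Newton-direction estimate of Lemma~\ref{Lemma boundedness Dx Dz} (which relied on Assumptions~\ref{Assumption 1}--\ref{Assumption 2}). First note that $\{\mu_k\}$ is nonincreasing: the step-length rule enforces $\mu_{k+1} \le (1-0.01\alpha_k)\mu_k \le \mu_k$ since $\alpha_k \in (0,1]$. Being bounded below by $0$, the sequence converges; suppose, for contradiction, that its limit is $\mu^\ast > 0$. Then $x_k^T z_k = n\mu_k \ge n\mu^\ast =: \epsilon > 0$ for every $k$, so the hypothesis of Lemma~\ref{Lemma infeasibility bounded Newton} is met and the Newton directions satisfy $\|(\Delta x_k,\Delta y_k,\Delta z_k)\|_2 \le C_\Delta$ with $C_\Delta$ independent of $k$. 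Moreover, since $(x_k,y_k,z_k) \in \mathcal{N}_{\mu_k}(\zeta_k,\lambda_k)$ we have $\gamma_\mu\mu^\ast \le \gamma_\mu\mu_k \le x_k^i z_k^i$, and $\sum_i x_k^i z_k^i = n\mu_k \le n\mu_0$ with all terms nonnegative gives $x_k^i z_k^i \le n\mu_0$; hence the complementarity products stay uniformly bounded away from $0$ and from $+\infty$.

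Next I would re-run the estimates (\ref{Lemma step-length relation 1})--(\ref{Lemma step-length relation 3}) together with the infeasibility-propagation identities (\ref{primal infeasibility formula})--(\ref{dual infeasibility formula}). Every quantity occurring there is now controlled uniformly in $k$: $(\Delta x_k)^T\Delta z_k$ and $|\Delta x_k^i \Delta z_k^i|$ are at most $C_\Delta^2$; $\|(y_k-\lambda_k,\,x_k-\zeta_k)\|_2$ is bounded by Premise~\ref{Premise 1}; the vectors $\bar b,\bar c$ are fixed by the starting point \eqref{starting point}; and $\mu^\ast \le \mu_k \le \mu_0$. Substituting these bounds into the arguments underlying Lemma~\ref{Lemma step-length} yields a threshold $\bar\alpha > 0$, independent of $k$, such that $(x_k+\alpha\Delta x_k,\,y_k+\alpha\Delta y_k,\,z_k+\alpha\Delta z_k)\in\mathcal{N}_{\mu_k(\alpha)}(\zeta_k,\lambda_k)$ and $\mu_k(\alpha) \le (1-0.01\alpha)\mu_k$ for all $\alpha \in (0,\bar\alpha]$. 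Since Algorithm~\ref{Algorithm PMM-IPM} selects the largest admissible step, $\alpha_k \ge \bar\alpha$ for every $k$, whence $\mu_{k+1} \le (1-0.01\bar\alpha)\mu_k$ and therefore $\mu_k \to 0$, contradicting $\mu^\ast > 0$. Consequently $\{\mu_k\} \to 0$.

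The main obstacle is the second paragraph: reconstructing the lower bound on the step-length once Assumptions~\ref{Assumption 1}--\ref{Assumption 2} are dropped. The delicate point is the admissibility condition involving the semi-norm, $\|(\tilde r_p(\alpha),\tilde r_d(\alpha))\|_{\mathcal{A}} \le \gamma_{\mathcal{A}}\rho\,\mu_k(\alpha)/\mu_0$: the feasible-case analysis used the $\|\cdot\|_2$--$\|\cdot\|_{\mathcal{A}}$ equivalence granted by Assumption~\ref{Assumption 2}, which here must be replaced by a direct bound of $\|\cdot\|_{\mathcal{A}}$ by a constant multiple of $\|\cdot\|_2$ on the subspace in which the residuals live, combined with the neighbourhood bound $\|(\tilde b_k,\tilde c_k)\|_{\mathcal{A}} \le \gamma_{\mathcal{A}}\rho$ to absorb the leading $(1-\alpha)$ term in (\ref{primal infeasibility formula})--(\ref{dual infeasibility formula}). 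Once this estimate is secured, the geometric contraction of $\mu_k$ and hence the contradiction follow exactly as sketched above; this is the portion carried out in detail in \cite{paper_41,paper_2}.
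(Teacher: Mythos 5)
Your proposal follows essentially the same route as the paper's own proof: argue by contradiction assuming $\mu_k$ stays bounded away from zero, invoke Lemma~\ref{Lemma infeasibility bounded Newton} for a $k$-independent bound on the Newton direction, and re-run the step-length analysis of Lemma~\ref{Lemma step-length} (including the residual identities \eqref{primal infeasibility formula}--\eqref{dual infeasibility formula} and the neighbourhood conditions, with Premise~\ref{Premise 1} replacing the feasibility assumptions) to obtain a uniform positive step-length $\alpha^*$, whence $\mu_{k+1}\le(1-0.01\alpha^*)\mu_k$ forces $\mu_k\to 0$, a contradiction. The semi-norm admissibility issue you flag is handled in the paper at the same level of detail (it simply asserts the bounds $C_2^\dagger$ and $\xi_{\mathcal{A}}$ and defers to the cited references), so your treatment is consistent with the paper's argument.
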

\begin{proof}
\noindent Assume, by virtue of contradiction, that $\mu_k > \epsilon$, $\forall\ k \geq 0$. Then, we know that the Newton direction obtained by the algorithm at every iteration, after solving (\ref{Newton System}), will be uniformly bounded by a constant dependent only on $n$, that is, there exist positive constants $C^{\dagger}_1, C_2^{\dagger}$, such that $\|(\Delta x_k,\Delta y_k,\Delta z_k)\|_2 \leq C^{\dagger}_1$ and $\|(\Delta x_k,\Delta y_k,\Delta z_k)\|_{\mathcal{A}} \leq C_2^{\dagger}$. As in Lemma \ref{Lemma step-length}, we define:
$$\tilde{r}_p(\alpha) = Ax_k(\alpha) + \mu_k(\alpha) (y_k(\alpha) - \lambda_{k}) - b - \frac{\mu_k(\alpha)}{\mu_0}\bar{b},$$
\noindent and 
$$\tilde{r}_d(\alpha) = - Qx_k(\alpha) + A^T y_k(\alpha) + z_k(\alpha) - \mu_k(\alpha) (x_k(\alpha) - \zeta_{k}) - c - \frac{\mu_k(\alpha)}{\mu_0}\bar{c},$$
\noindent for which we know that equalities \eqref{primal infeasibility formula} and \eqref{dual infeasibility formula} hold, respectively.
 Take any $k \geq 0$ and define the following functions:
\begin{equation*}
\begin{split}
f_1(\alpha) = \ &(x_k+\alpha\Delta x_k)^T(z_k + \alpha \Delta z_k) - (1 -\alpha(1-\frac{\sigma_{\min}}{2}))x_k^T z_k,\\
f_2^i(\alpha) =\ & (x_k^i+ \alpha \Delta x_k^i)(z_k^i + \alpha \Delta z_k^i) - \gamma_{\mu} \mu_k(\alpha),\ i=1,\cdots,n,\\
f_3(\alpha) = \ & (1-0.01\alpha)x_k^Tz_k - (x_k(\alpha))^T (z_k(\alpha)),\\
g_{2}(\alpha) = \ & \frac{\mu_k(\alpha)}{\mu_0}C_N - \|(\tilde{r}_p(\alpha),\tilde{r}_d(\alpha))\|_2, \\
g_{\mathcal{A}}(\alpha) = \ & \frac{\mu_k(\alpha)}{\mu_0}\gamma_{\alpha} \rho - \|(\tilde{r}_p(\alpha),\tilde{r}_d(\alpha))\|_{\mathcal{A}},
\end{split}
\end{equation*}
\noindent where $\mu_k(\alpha) = \frac{(x_k + \alpha \Delta x_k)^T (z_k + \alpha \Delta z_k)}{n}$, $(x_k(\alpha),y_k(\alpha),z_k(\alpha)) = (x_k + \alpha \Delta x_k,y_k + \alpha \Delta y_k,z_k + \alpha \Delta z_k)$. We would like to show that there exists $\alpha^* > 0$, such that:
$$f_1(\alpha) \geq 0,\ f_2^i(\alpha) \geq 0,\ \forall\ i = 1,\ldots,n,\ f_3(\alpha) \geq 0,\ g_2(\alpha) \geq 0,\ g_{\mathcal{A}}(\alpha) \geq 0,\ \forall\ \alpha \in (0,\alpha^*].$$
\noindent These conditions model the requirement that the next iteration of Algorithm \ref{Algorithm PMM-IPM} must lie in the updated neighbourhood: $\mathcal{N}_{\mu_{k+1}}(\zeta_k,\lambda_{k})$. Note that Algorithm \ref{Algorithm PMM-IPM} updates the parameters $\lambda_k,\ \zeta_k$ only if the selected new iterate belongs to the new neighbourhood, defined using the updated parameters. Hence, it suffices to show that $(x_{k+1},y_{k+1},z_{k+1}) \in \mathcal{N}_{\mu_{k+1}}(\zeta_k,\lambda_{k})$. 
\par Proving the existence of $\alpha^* > 0$, such that each of the aforementioned functions is positive, follows exactly the developments in Lemma \ref{Lemma step-length}, with the only difference being that the bounds on the directions are not explicitly specified in this case. Using the same methodology as in Lemma \ref{Lemma step-length}, while keeping in mind our assumption, namely $x_k^T z_k > \epsilon$, and hence $ x_k^i z_k^i > \bar{\epsilon}$, we can show that:

\begin{equation} \label{infeasibility minimum step-length}
\alpha^* = \min \bigg\{1,\frac{\sigma_{\min}\epsilon}{2(C_1^{\dagger})^2}, \frac{(1-\gamma_{\mu})\sigma_{\min}\bar{\epsilon}}{2(C_1^{\dagger})^2},\frac{0.49\epsilon}{2(C_1^{\dagger})^2}, \frac{\sigma_{\min}C_N \epsilon}{2\mu_0(\xi_2)},\frac{\sigma_{\min}\gamma_{\mathcal{A}}\rho \epsilon}{2\xi_{\mathcal{A}}\mu_0} \bigg\},
\end{equation}
\noindent where $\xi_2,\ \xi_{\mathcal{A}}$ are bounded constants, defined as in \eqref{step-length, auxiliary constants}, and dependent on $C_1^{\dagger},\ C_2^{\dagger}$. However, using the inequality:
$$\mu_{k+1} \leq (1-0.01 \alpha)\mu_k,\ \forall\ \alpha \in [0,\alpha^*]$$
\noindent we get that $\mu_k \rightarrow 0$, which contradicts our assumption that $\mu_k > \epsilon,\ \forall\ k\geq 0$, and completes the proof. 
\end{proof}
\noindent Finally, using the following Theorem, we derive a necessary condition for infeasibility.
\begin{theorem} \label{Theorem Infeasibility condition}
Given Premise \textnormal{\ref{Premise 2}}, i.e. there does not exist a KKT triple for the pair \textnormal{(\ref{non-regularized primal})-(\ref{non-regularized dual})}, then Premise \textnormal{\ref{Premise 1}} fails to hold.
\end{theorem}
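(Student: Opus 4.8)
The plan is to prove the contrapositive by \emph{reductio ad absurdum}: assume that Premises \ref{Premise 1} and \ref{Premise 2} hold simultaneously and derive a contradiction. The first move is to observe that Premise \ref{Premise 1} is precisely the hypothesis of Lemma \ref{Lemma infeasibility mu to zero} (whose proof, together with that of Lemma \ref{Lemma infeasibility bounded Newton}, uses only Premise \ref{Premise 1} and the construction of Algorithm \ref{Algorithm PMM-IPM}, not Assumptions \ref{Assumption 1}-\ref{Assumption 2}), and hence $\mu_k \to 0$. Next I would feed this into the neighbourhood inclusion $(x_k,y_k,z_k) \in \mathcal{N}_{\mu_k}(\zeta_k,\lambda_k)$: the two affine relations defining $\tilde{\mathcal{C}}_{\mu_k}(\zeta_k,\lambda_k)$ bound the \emph{regularized} residuals by $C_N \mu_k/\mu_0$, so
\begin{equation*}
\|Ax_k - b\|_2 \le \tfrac{\mu_k}{\mu_0}\big(C_N + \|\bar b\|_2\big) + \mu_k\|y_k-\lambda_k\|_2,\qquad \|{-Qx_k + A^Ty_k + z_k - c}\|_2 \le \tfrac{\mu_k}{\mu_0}\big(C_N + \|\bar c\|_2\big) + \mu_k\|x_k-\zeta_k\|_2 .
\end{equation*}
Since Premise \ref{Premise 1} keeps $\|x_k-\zeta_k\|_2$ and $\|y_k-\lambda_k\|_2$ bounded while $\mu_k\to 0$, both right-hand sides vanish, so $\|Ax_k-b\|_2\to 0$ and $\|{-Qx_k+A^Ty_k+z_k-c}\|_2\to 0$, with $x_k>0$, $z_k>0$ (and, incidentally, $x_k^Tz_k = n\mu_k\to 0$) for all $k$.

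The second half of the argument extracts a contradiction from Premise \ref{Premise 2}. The plan here is to invoke the standard convex-QP fact that a KKT triple for \eqref{non-regularized primal}-\eqref{non-regularized dual} exists if and only if both problems are feasible: the substantive direction is \cite[Prop.~2.3.4]{book_2} (a $Q\succeq 0$ quadratic bounded below on a nonempty polyhedron attains its minimum, and the polyhedral constraint qualification is automatic), and the reverse direction is trivial. Hence Premise \ref{Premise 2} forces at least one of \eqref{non-regularized primal}, \eqref{non-regularized dual} to be infeasible. If \eqref{non-regularized primal} is infeasible, then $b\notin\{Ax:x\ge 0\}$, and this set is a finitely generated — hence closed — cone, so $\delta:=\mathrm{dist}_2\big(b,\{Ax:x\ge 0\}\big)>0$; since $x_k\ge 0$ we get $\|Ax_k-b\|_2\ge\delta$ for all $k$, contradicting $\|Ax_k-b\|_2\to 0$. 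If instead \eqref{non-regularized dual} is infeasible, then $c\notin\{-Qx+A^Ty+z:\ x\in\mathbb{R}^n,\ y\in\mathbb{R}^m,\ z\ge 0\}$, which is the image of a polyhedron under a linear projection and therefore again closed; the identical distance argument, now using $z_k\ge 0$, contradicts $\|{-Qx_k+A^Ty_k+z_k-c}\|_2\to 0$. Either way we reach an impossibility, so Premise \ref{Premise 1} cannot hold, which is the assertion.

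The one genuinely conceptual point — and the place where a naive approach stalls — is that once Assumptions \ref{Assumption 1}-\ref{Assumption 2} are dropped one can no longer rely on boundedness of $\{(x_k,y_k,z_k)\}$ (Lemma \ref{Lemma boundedness of x z} no longer applies), so there is no convergent subsequence to pass to the limit in; the resolution is to avoid compactness altogether and instead use the \emph{closedness} of the polyhedral cone $\{Ax:x\ge 0\}$ and of the projected polyhedron $\{-Qx+A^Ty+z:z\ge 0\}$, which turns ``no feasible point'' into a \emph{uniform} strictly positive floor on the corresponding infeasibility. The remaining ingredients — invoking Lemma \ref{Lemma infeasibility mu to zero}, reading off the residual bounds from \eqref{Small neighbourhood}, and the QP feasibility/KKT equivalence — are routine, so the main care is simply in checking that those auxiliary results survive the removal of Assumptions \ref{Assumption 1}-\ref{Assumption 2}.
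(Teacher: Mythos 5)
Your proof is correct, and its skeleton coincides with the paper's own argument: assume Premise \ref{Premise 1}, invoke Lemma \ref{Lemma infeasibility mu to zero} (which, as you check, relies only on Premise \ref{Premise 1} and Lemma \ref{Lemma infeasibility bounded Newton}, not on Assumptions \ref{Assumption 1}--\ref{Assumption 2}, consistent with how Section \ref{section Infeasible problems} is set up) to get $\mu_k \to 0$, and then combine the neighbourhood bounds of \eqref{Small neighbourhood} with the boundedness of $\|x_k-\zeta_k\|_2$ and $\|y_k-\lambda_k\|_2$ to conclude that $Ax_k-b$, $c+Qx_k-A^Ty_k-z_k$ and $x_k^Tz_k$ all vanish (the paper does this along a subsequence, which is not actually needed since the bounds hold for the whole sequence). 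Where you genuinely add something is in the final step: the paper simply asserts that these vanishing residuals ``contradict Premise \ref{Premise 2}'', leaving implicit why asymptotic feasibility of a possibly unbounded sequence is incompatible with the nonexistence of a KKT triple --- and, as you observe, compactness is unavailable here because Lemma \ref{Lemma boundedness of x z} no longer applies. Your resolution supplies exactly the missing justification: if \eqref{non-regularized primal} or \eqref{non-regularized dual} is infeasible, then $b$ (respectively $c$) has strictly positive distance to the closed set $\{Ax : x\ge 0\}$ (respectively $\{-Qx+A^Ty+z : z\ge 0\}$, closed as the linear image of a polyhedron), contradicting the vanishing residuals; while if both are feasible, the convex-QP existence result cited by the paper itself (\cite[Prop.~2.3.4]{book_2}) yields a KKT triple, contradicting Premise \ref{Premise 2} directly. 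So your write-up is best viewed as a rigorous completion of the paper's terse concluding deduction rather than a different proof route, and it is a sound one.
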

\begin{proof}
\noindent By virtue of contradiction, let Premise \ref{Premise 1} hold. In Lemma \ref{Lemma infeasibility mu to zero}, we proved that given Premise \ref{Premise 1}, Algorithm \ref{Algorithm PMM-IPM} produces iterates that belong to the neighbourhood (\ref{Small neighbourhood}) and $\mu_k \rightarrow 0$. But from the neighbourhood conditions we can observe that:
$$\|Ax_k + \mu_k(y_k - \lambda_k) - b - \frac{\mu_k}{\mu_0}\bar{b} \|_2 \leq C_N\frac{\mu_k}{\mu_0},$$
\noindent and
$$\|-Qx_k + A^Ty_k + z_k - \mu_k(x_k - \zeta_k)-c-\frac{\mu_k}{\mu_0}\bar{c}\|_2 \leq C_N \frac{\mu_k}{\mu_0}.$$
\noindent Hence, we can choose a sub-sequence $\mathcal{K} \subseteq \mathbb{N}$, for which:
$$\{Ax_k + \mu_k(y_k - \lambda_k) - b - \frac{\mu_k}{\mu_0}\bar{b} \}_{\mathcal{K}} \rightarrow 0,\ \text{and} \ \{-Qx_k + A^Ty_k + z_k - \mu_k(x_k - \zeta_k)-c-\frac{\mu_k}{\mu_0}\bar{c}\}_{\mathcal{K}} \rightarrow 0.$$
\noindent But since $\|y_k-\lambda_k\|_2$ and $\|x_k - \zeta_k\|_2$ are bounded, while $\mu_k \rightarrow 0$, we have that:
$$\{Ax_k - b\}_{\mathcal{K}} \rightarrow 0,\ \{c + Qx_k - A^T y_k - z_k\}_{\mathcal{K}} \rightarrow 0,\ \text{and}\ \{x_k^T z_k\}_{\mathcal{K}} \rightarrow 0.$$
\noindent This contradicts Premise \ref{Premise 2}, i.e. that the pair (\ref{non-regularized primal})-(\ref{non-regularized dual}) does not have a KKT triple, and completes the proof. 
\end{proof}
\par In the previous Theorem, we proved that Premise \ref{Premise 1} is a necessary condition for infeasibility, since otherwise, we arrive at a contradiction. Nevertheless, this does not mean that the condition is also sufficient. In order to obtain a more reliable test for infeasibility, that uses the previous result, we will have to use the properties of Algorithm \ref{Algorithm PMM-IPM}. In particular, we can notice that if the primal-dual problem is infeasible, then the PMM sub-problem will stop being updated after a finite number of iterations. In that case, we know from Theorem \ref{Theorem Infeasibility condition} that the sequence $\|(x_k-\zeta_k,y_k - \lambda_k)\|_2$ will grow unbounded. Hence, we can define a maximum number of iterations per PMM sub-problem, say $K^{\dagger} \geq 0$, as well as a very large constant $C^{\dagger} \geq 0$. Then, if $\|(x_k-\zeta_k,y_k - \lambda_k)\|_2 > C^{\dagger}$ and $k \geq K^{\dagger}$, the algorithm is terminated. The specific choices for these constants will be given in the next section.

\section{Computational Experience} \label{section numerical results}

\par In this section, we provide some implementation details and present computational results of the method, over a set of small to large scale linear and convex quadratic programming problems. The code was written in MATLAB and can be found in the following link:\\ \centerline{\textbf{https://www.maths.ed.ac.uk/ERGO/software.html} (\href{https://github.com/spougkakiotis/IP_PMM}{source link}).}
\subsection{Implementation Details}

\par Our implementation deviates from the theory, in order to gain some additional control, as well as computational efficiency. Nevertheless, the theory has served as a guideline to tune the code reliably. There are two major differences between the practical implementation of IP-PMM and its theoretical algorithmic counterpart. Firstly, our implementation uses different penalty parameters for the proximal terms and the logarithmic barrier term. In particular, we define a primal proximal penalty $\rho$, a dual proximal penalty $\delta$ and the barrier parameter $\mu$. Using the previous, the PMM Lagrangian function in \eqref{PMM lagrangian}, at an arbitrary iteration $k$ of the algorithm, becomes:
\begin{equation*} 
\begin{split}
\mathcal{L}^{PMM}_{\mu_k,\delta_k,\rho_k} (x;\zeta_k, \lambda_k) = c^Tx + \frac{1}{2}x^T Q x -\lambda_k^T (Ax - b) + \frac{1}{2\delta_k}\|Ax-b\|_2^2 + \frac{\rho_k}{2}\|x-\zeta_k\|_2^2,
\end{split}
\end{equation*}
\noindent and \eqref{Proximal IPM Penalty} is altered similarly. The second difference lies in the fact that we do not require the iterates of the method to lie in the neighbourhood defined in \eqref{Small neighbourhood} in order to gain efficiency. In what follows, we provide further details concerning our implementation choices.
\subsubsection{Free Variables}
\par The method takes as input problems in the following form:
\begin{equation*} 
\text{min}_{x} \ \big( c^Tx + \frac{1}{2}x^T Q x \big), \ \ \text{s.t.}  \  Ax = b,   \ x^{I} \geq 0,\ x^{F}\ \text{free},  
\end{equation*}
\noindent where $I = \{1,\cdots,n\} \setminus F$ is the set of indices indicating the non-negative variables. In particular, if a problem instance has only free variables, no logarithmic barrier is employed and the method reduces to a standard proximal method of multipliers. Of course in this case, the derived complexity result does not hold. Nevertheless, a global convergence result holds, as shown in \cite{paper_13}. In general, convex optimization problems with only equality constraints are usually easy to deal with, and the proposed algorithm behaves very well when solving such problems in practice.

\subsubsection{Constraint Matrix Scaling}

\par In the pre-processing stage, we check if the constraint matrix is well scaled, i.e if:
$$\big(\max_{i \in \{1,\ldots,m\},j \in \{1,\ldots,n\}}(|A^{ij}|) < 10\big) \wedge \big(\min_{i \in \{1,\ldots,m\},j \in \{1,\ldots,n\}:\ |A^{ij}| > 0}(|A^{ij}|) > 0.1\big).$$
\noindent If the previous is not satisfied, we apply geometric scaling in the rows of $A$, that is, we multiply each row of $A$ by a scalar of the form:
$$d^i = \frac{1}{\sqrt{\max_{j \in \{1,\cdots,n\}}(|A^{i,:}|) \cdot \min_{j \in \{1,\ldots,n\}:\ |A^{ij}| > 0}(|A^{i,:}|)}},\ \forall\ i \in \{1,\ldots,m\}$$
\par However, for numerical stability, we find the largest integer $p^i$, such that: $2^{p^i} \leq d^i$ and we set $d^i = 2^{p^i}$, $\forall\ i \in \{1,\ldots,m\}$. Hence, the scaling factors are powers of two. Based on the IEEE  representation of floating point numbers, multiplying by a power of 2 translates into an addition of this power to the exponent, without affecting the mantissa.

\subsubsection{Starting Point, Newton-step Computation and Step-length}
\par We use a starting point, based on the developments in \cite{paper_6}. To construct it, we try to solve the pair of problems (\ref{non-regularized primal})-(\ref{non-regularized dual}), ignoring the non-negativity constraints. 

\begin{equation*} 
\tilde{x} = A^T(AA^T)^{-1}b,\ \ \tilde{y} = (AA^T)^{-1}A(c+Q\tilde{x}), \ \ \tilde{z} = c - A^T \tilde{y} + Q\tilde{x}.
\end{equation*}

\par However, in order to ensure stability and efficiency, we regularize the matrix $AA^T$ and employ the Preconditioned Conjugate Gradient (PCG) method to solve these systems (in order to avoid forming $AA^T$). We use the classical Jacobi preconditioner to accelerate PCG, i.e. $P = \text{diag}(AA^T) + \delta I$, where $\delta = 8$, is set as the regularization parameter.
\par Then, to guarantee positivity and sufficient magnitude of $x_I,z_I$, we compute $\delta_x = \text{max}\{-1.5\cdot \text{min}(\tilde{x}^{I}),0\}$ and $\delta_z = \text{max}\{-1.5 \cdot \text{min}(\tilde{z}^{I}),0\}$ and we obtain:

\begin{equation*}
\tilde{\delta_x} = \delta_x + 0.5\frac{(\tilde{x}^I+ \delta_x e_{|I|})^T(\tilde{z}^I+ \delta_z e_{|I|})}{\sum_{i=1}^{|I|} (\tilde{z}^{I(i)} + \delta_z)}, \quad
\tilde{\delta_z} = \delta_z + 0.5\frac{(\tilde{x}^I+ \delta_x e_{|I|})^T(\tilde{z}^I+ \delta_z e_{|I|})}{\sum_{i=1}^{|I|} (\tilde{x}^{I(i)} + \delta_x)},
\end{equation*}
\noindent where $e_{|I|}$ is the vector of ones of appropriate dimension. Finally, we set:

\begin{equation*}
 y_0 = \tilde{y},\ \ z_0^{I} = \tilde{z}^{I} + \tilde{\delta_z}e_{|I|},\ \ z_0^{F} = 0,\ \ x_0^{I} = \tilde{x}^{I} + \tilde{\delta_x}e_{|I|},\ \ x_0^{F} = \tilde{x}^F.
\end{equation*}

\par In order to find the Newton step, we employ a widely used predictor-corrector method. The practical implementation deviates from the theory at this point, in order to gain computational efficiency. We provide the algorithmic scheme in Algorithm \ref{Algorithm Predictor-Corrector} for completeness, but the reader is referred to \cite{paper_6}, for an extensive review of the method. It is important to notice here that the centering parameter ($\sigma > 0$) is not present in Algorithm \ref{Algorithm Predictor-Corrector}. Solving two different systems, serves as a way of decreasing the infeasibility and allowing $\mu_k$ (and hence $\delta_k,\ \rho_k$) to decrease. 

\renewcommand{\thealgorithm}{PC}

\begin{algorithm}[!ht]
\caption{Predictor-Corrector method}
    \label{Algorithm Predictor-Corrector}
\begin{algorithmic}[]
	\State Compute the predictor:
	\begin{equation} \label{Augmented System predictor}
\begin{bmatrix} 
-(Q+\Theta_k^{-1}+\rho_k I) &   A^T \\
A & \delta_k I
\end{bmatrix}
\begin{bmatrix}
\Delta_p x\\ 
\Delta_p y
\end{bmatrix}
= 
\begin{bmatrix}
c + Qx_k - A^T y_k -  \rho_k (x_k - \zeta_k) - d_{1_k}\\
b-Ax_k - \delta_k (y_k - \lambda_k)
\end{bmatrix},
\end{equation}
\noindent where $d_{1_k}^I = -\mu_k (X_k^I)^{-1}e_{|I|}$, $d_{1_k}^F = 0$, and  $(\Theta_k^I)^{-1} = (X_k^I)^{-1} (Z_k^I)$, $(\Theta_k^F)^{-1} = 0$.
\State Retrieve $\Delta_p z$:
$$\Delta_p z^I = d_{1_k}^I - (X_k^I)^{-1}(Z_k^{I} \Delta_p x^{I}),\ \ \Delta_p z^F = 0.$$
\State Compute the step in the non-negativity orthant:
\begin{equation*} 
\alpha_x^{\max} = \text{min}_{(\Delta_p x_k^{I(i)} < 0)} \bigg \{1,-\frac{x^{I(i)}}{\Delta_p x^{I(i)}}\bigg\},\ \ \alpha_z^{\max} = \text{min}_{(\Delta_p z^{I(i)} < 0)} \bigg \{1,-\frac{z_k^{I(i)}}{\Delta_p z^{I(i)}}\bigg\},
\end{equation*}
\noindent for $i = 1,\cdots,|I|$, and set:
\begin{equation*}
\alpha_x = \tau \alpha_x^{\max},\ \ \alpha_z= \tau \alpha_z^{\max},
\end{equation*}
\noindent with $\tau = 0.995$ \Comment{avoid going too close to the boundary}.
\State Compute a centrality measure: 
$$g_{\alpha} = (x^I+\alpha_x \Delta_p x^I)^T(z^I + \alpha_z \Delta_p z^I).$$
\State Set: $\mu = \big(\frac{g_{\alpha}}{(x^I_k)^Tz^I_k} \big)^2 \frac{g_{\alpha}}{|I|}$
\State Compute the corrector:
\begin{equation} \label{Augmented System corrector}
\begin{bmatrix} 
-(Q+\Theta_k^{-1}+\rho_k I) &   A^T \\
A & \delta_k I
\end{bmatrix}
\begin{bmatrix}
\Delta_c x\\ 
\Delta_c y
\end{bmatrix}
= 
\begin{bmatrix}
 d_{2_k}\\
0
\end{bmatrix},
\end{equation}
\noindent with $d^I_{2_k} = \mu (X_k^I)^{-1}e_{|I|} - (X_k^I)^{-1}\Delta_p X^I  \Delta_p z^I$ and $d^F_{2_k} = 0$, $\Delta_p X = \text{diag}(\Delta_p x)$.
\State Retrieve $\Delta_c z$:
$$\Delta_c z^I = d_{2_k}^I - (X_k^I)^{-1}(Z_k^{I} \Delta_c x^{I}),\ \ \Delta_c z^F = 0.$$
\State $$(\Delta x, \Delta y, \Delta z) = (\Delta_p x + \Delta_c x, \Delta_p y + \Delta_c y, \Delta_p z + \Delta_c z).$$
\State Compute the step in the non-negativity orthant:
\begin{equation*} 
\alpha_x^{\max} = \text{min}_{\Delta x^{I(i)} < 0} \bigg \{1,-\frac{x_k^{I(i)}}{\Delta x^{I(i)}}\bigg\},\ \ \alpha_z^{\max} = \text{min}_{\Delta z^{I(i)} < 0} \bigg \{1,-\frac{z_k^{I(i)}}{\Delta z^{I(i)}}\bigg\},
\end{equation*}
\noindent and set:
\begin{equation*}
\alpha_x = \tau \alpha_x^{\max},\ \ \alpha_z= \tau\alpha_z^{\max}.
\end{equation*}
\State Update: 
$$(x_{k+1},y_{k+1},z_{k+1}) = (x_k+\alpha_x\Delta x,y_k + \alpha_z \Delta y,z_k + \alpha_z \Delta z).$$
\end{algorithmic}
\end{algorithm}
\par We solve the systems (\ref{Augmented System predictor}) and (\ref{Augmented System corrector}), using the built-in MATLAB symmetric decomposition (i.e. \texttt{ldl}). Such a decomposition always exists, with $D$ diagonal, for the aforementioned systems, since after introducing the regularization, the system matrices are guaranteed to be quasi-definite; a class of matrices known to be strongly factorizable, \cite{paper_4}. In order to exploit that, we change the default pivot threshold of \texttt{ldl} to a value slightly lower than the minimum allowed regularization value ($\text{reg}_{thr}$; specified in sub-section \ref{subsubsection regularization}). Using such a small pivot threshold guarantees that no 2x2 pivots will be employed during the factorization process.
\subsubsection{PMM Parameters} \label{subsubsection regularization}
\renewcommand{\thealgorithm}{PEU}
\par At this point, we discuss how we update the PMM sub-problems in practice, as well as how we tune the penalty parameters (regularization parameters) $\delta,\ \rho$. Notice that in Section \ref{section Polynomial Convergence} we set $\delta_k = \rho_k = \mu_k$. While this is beneficial in theory, as it gives us a reliable way of tuning the penalty parameters of the algorithm, it is not very beneficial in practice, as it does not allow us to control the regularization parameters in the cases of extreme ill-conditioning. Hence, we allow the use of different penalty parameters connected to the PMM sub-problems, while enforcing both parameters ($\delta_k,\ \rho_k$) to decrease at the same rate as $\mu_k$ (based on the theoretical results in Sections \ref{section Polynomial Convergence}, \ref{section Infeasible problems}).
\par On the other hand, the algorithm is more optimistic in practice than it is in theory. In particular, we do not consider the semi-norm \eqref{semi-norm definition} of the infeasibility, while we allow the update of the estimates $\lambda_k\ ,\zeta_k$, to happen independently. In particular, the former is updated when the 2-norm of the primal infeasibility is sufficiently reduced, while the latter is updated based on the dual infeasibility.
\par More specifically, at the beginning of the optimization, we set: $\delta_0 = 8,\ \rho_0 = 8$, $\lambda_0 = y_0$, $\zeta_0 = x_0$. Then, at the end of every iteration, we employ the algorithmic scheme given in Algorithm \ref{Algorithm Regularization updates}. In order to ensure numerical stability, we do not allow $\delta$ or $\rho$ to become smaller than a suitable positive value, $\text{reg}_{thr}$. We set: $\text{reg}_{thr} = \max\big\{\frac{\text{tol}}{\max\{\|A\|^2_{\infty},\|Q\|^2_{\infty}\}},10^{-10}\big\}$. This value is based on the developments in \cite{paper_35}, in order to ensure that we introduce a controlled perturbation in the eigenvalues of the non-regularized linear system. The reader is referred to \cite{paper_35} for an extensive study on the subject. If the factorization fails, we increase the regularization parameters by a factor of 10 and repeat the factorization. If the factorization fails while either $\delta$ or $\rho$ have reached their minimum allowed value ($\text{reg}_{thr}$), then we also increase this value by a factor of 10. If this occurs 5 consecutive times, the method is terminated with a message indicating ill-conditioning. 
\begin{algorithm}[!ht]
\caption{Penalty and Estimate Updates}
    \label{Algorithm Regularization updates}
\begin{algorithmic}[]
	\State $r = \frac{|\mu_{k}-\mu_{k+1}|}{\mu_k}$ (rate of decrease of $\mu$).
  	\If {($\|Ax_{k+1} - b\|_{2} \leq 0.95 \cdot \|Ax_k - b\|_2$)}
  		\State $\lambda_{k+1} = y_{k+1}$,
  		\State $\delta_{k+1} =(1-r) \cdot\delta_k$.
  	\Else
  		\State $\lambda_{k+1} = \lambda_k$,
  		\State $\delta_{k+1} = (1-\frac{1}{3}r) \cdot\delta_k$, \Comment{less aggressive is in this case.}
  	\EndIf
  	\State $\delta_{k+1} = \max \{\delta_{k+1},\text{reg}_{thr} \}$, \Comment{for numerical stability (ensure quasi-definiteness).}
  	\If {($\|c + Qx_{k+1} - A^Ty_{k+1} -z_{k+1}\|_2 \leq 0.95\cdot\|c+Qx_k - A^Ty_k - z_k\|_2$)}
  		\State $\zeta_{k+1} = x_{k+1}$.
  		\State $\rho_{k+1} = (1-r) \cdot\rho_k$.
  	\Else
  		\State $\zeta_{k+1} = \zeta_k$.
  		\State $\rho_{k+1} = (1-\frac{1}{3}r)\cdot \rho_k$.
  	\EndIf
    \State $\rho_{k+1} = \max \{\rho_{k+1},\text{reg}_{thr}\}$.
\end{algorithmic}
\end{algorithm}

\subsubsection{Termination Criteria}
\par There are four possible termination criteria. They are summarized in Algorithm \ref{Algorithm Termination Criteria}. In the aforementioned algorithm, tol represents the error tolerance chosen by the user. Similarly, $\text{IP}_{\text{maxit}}$ is the maximum allowed IPM iterations, also chosen by the user. On the other hand, $\text{PMM}_{\text{maxit}}$ is a threshold indicating that the PMM sub-problem needs too many iterations before being updated (that is if $k_{\text{PMM}} > \text{PMM}_{\text{maxit}}$). We set $\text{PMM}_{\text{maxit}} = 5$. When either $\lambda_k$ or $\zeta_k$ is updated, we set $k_{\text{PMM}} =0$. 
\par Let us now support the proposed infeasibility detection mechanism. In particular, notice that as long as the penalty parameters do not converge to zero, every PMM-subproblem must have a solution, even in the case of infeasibility. Hence, we expect convergence of the regularized primal (dual, respectively) infeasibility to zero, while from Section \ref{section Infeasible problems}, we know that a necessary condition for infeasibility is that the sequence $\|(x_k-\zeta_k,y_k - \lambda_k)\|_2$ diverges.  If this behaviour is observed, while the PMM parameters $\lambda_k,\ \zeta_k$ are not updated (which is not expected to happen in the feasible but rank deficient case), then we can conclude that the problem under consideration is infeasible.
\renewcommand{\thealgorithm}{TC}

\begin{algorithm}[!ht]
\caption{Termination Criteria}
    \label{Algorithm Termination Criteria}
\begin{algorithmic}[]
	\State \textbf{Input:} tol, $k_{\text{IP}}$, $k_{\text{PMM}}$, $\text{IP}_{\text{maxit}}$, $\text{PMM}_{\text{maxit}}$.
  	\If {\bigg($\big(\frac{\| c -A^T y_k + Qx_k - z_k \|_{2}}{\max\{\|c\|_2,1\}} \leq \text{tol}\big) \wedge \big(\frac{\|b - Ax_k\|_2}{\max\{\|b\|_2,1\}} \leq \text{tol} \big) \wedge \big(\mu_k \leq \text{tol}\big)$\bigg)}
  		\State \Return Solution $(x_k,y_k,z_k)$.
  	\ElsIf{\bigg($\big(\|c + Qx_k - A^T y_k - z_k + \rho_k(x_k - \zeta_k)\|_2 \leq \text{tol}\big) \wedge \big(\|x_k - \zeta_k\|_2 > 10^{10} \big)$\bigg)}
  		\If {$\big(k_{\text{PMM}}\geq \text{PMM}_{\text{maxit}} \big)$} \Comment{PMM sub-problem not updated for many iterations}
  			\State Declare Infeasibility.
  		\EndIf
  	\ElsIf{\bigg($\big(\|b-Ax_k - \delta_k(y_k-\lambda_k)\|_2 \leq \text{tol}\big)\ \wedge\ \big(\|y_k - \lambda_k\|_2> 10^{10}\big)$ \bigg)}
  	  	\If {$\big(k_{\text{PMM}}\geq \text{PMM}_{\text{maxit}} \big)$}
  			\State Declare Infeasibility.
  		\EndIf
  	\ElsIf{($k_{\text{IP}} \geq \text{IP}_{\text{maxit}}$)} \Comment{Maximum IPM iterations reached.}
  		\State No Convergence.
  	\EndIf

\end{algorithmic}
\end{algorithm}
\subsection{Numerical Results}

\par At this point, we present the computational results obtained by solving a set of small to large scale linear and convex quadratic problems. In order to stress out the importance of regularization, we compare IP-PMM with a non-regularized IPM. The latter implementation, follows exactly from the implementation of IP-PMM, simply by fixing $\delta$ and $\rho$ to zero. In the non-regularized case, the minimum pivot of the $\texttt{ldl}$ function is restored to its default value, in order to avoid numerical instability. Throughout all of the presented experiments, we set the number of maximum iterations to $200$. It should be noted here, that we expect IP-PMM to require more iterations to converge, as compared to the non-regularized IPM. In turn, the Newton systems arising in IP-PMM, have better numerical properties (accelerating the factorization process), while overall the method is expected to be significantly more stable. In what follows, we demonstrate that this increase in the number of iterations is benign, in that it does not make the resulting method inefficient. In contrast, we provide computational evidence that the acceleration of the factorization process more than compensates for the increase in the number of iterations.  The experiments were conducted on a PC with a 2.2GHz Intel Core i7 processor (hexa-core), 16GB RAM, run under Windows 10 operating system. The MATLAB version used was R2018b.

\subsubsection{Linear Programming Problems}
\par Let us compare the proposed method with the respective non-regularized implementation, over the Netlib collection, \cite{paper_7}. The test set consists of 96 linear programming problems. We set the desired tolerance to $\text{tol} = 10^{-6}$. Firstly, we compare the two methods, without using the pre-solved version of the problem collection (e.g. allowing rank-deficient matrices). In this case, the non-regularized IPM converged for only 66 out of the total 96 problems. On the other hand, IP-PMM solved successfully the whole set, in 160 seconds (and a total of 2,609 IPM iterations). Hence, one of the benefits of regularization, that of alleviating rank deficiency of the constraint matrix, becomes immediately obvious. 
\par However, in order to explore more potential benefits of regularization, we run the algorithm on a pre-solved Netlib library. In the pre-solved set, the non-regularized IPM converged for 93 out of 96 problems. The three failures occurred due to instability of the Newton system. The overall time spent was 353 seconds (and a total of 1,871 IPM iterations). On the other hand, IP-PMM solved the whole set in 161 seconds (and a total of 2,367 iterations). Two more benefits of regularization become obvious here. Firstly, we can observe that numerical stability can be a problem in a standard IPM, even if we ensure that the constraint matrices are of full row rank. Secondly, notice that despite the fact that IP-PMM required 26\% more iterations, it still solved the whole set in 55\% less CPU time. This is because in IP-PMM, only diagonal pivots are allowed during the factorization. We could enforce the same condition on the non-regularized IPM, but then significantly more problems would fail to converge (22/96) due to numerical instability. 
\par In Table \ref{Table netlib large instances}, we collect statistics from the runs of the two methods over some medium scale instances of the pre-solved Netlib test set. 
\begin{table}[H]
\centering
\caption{Medium-Scale Netlib Problems\label{Table netlib large instances}}
\begin{tabular}{@{\extracolsep{4pt}}rrrrrrr@{}}
    \toprule
\multirow{2}{*}{\textbf{Name}}            & \multirow{2}{*}{$\bm{nnz(A)}$}            & \multicolumn{2}{c}{{\textbf{IP-PMM}}}   & \multicolumn{2}{c}{\textbf{IPM}}\\  \cline{3-4}  \cline{5-6}
& &{Time (s)} & { IP-Iter.}  & {Time (s)}& {IP-Iter.} \\ \midrule
80BAU3B &   $29,063 $ & 1.43 & 44  & 9.68 & 40 \\
D6CUBE &  $43,888$ & 1.26 & 25  & 9.64 & 22  \\
DFL001 & $41,873$ & 25.42 & 47  & $\dagger$\tablefootnote{$\dagger$ means that the method did not converge.} & $\dagger$ \\ 
FIT2D & $138,018$ & 8.52 & 27  & 23.94 & 25  \\ 
FIT2P & $60,784$ & 1.24 & 24  & 1.56 & 19   \\ 
PILOT87 &  $73,804$ & 7.21 & 49  & 95.04 & 46 \\
QAP15 & $110,700$ & 91.78 & 23  & 93.56 & 18 \\ \bottomrule
\end{tabular}
\end{table}
\noindent From Table \ref{Table netlib large instances}, it becomes obvious that the factorization efficiency is significantly improved by the introduction of the regularization terms. In all of the presented instances, IP-PMM converged needing more iterations, but requiring less CPU time.
\par In order to summarize the comparison of the two methods, we include Figure \ref{figure LP perf. prof.}, which contains the performance profiles, over the pre-solved Netlib set, of the two methods. IP-PMM is represented by the green line (consisted of  triangles), while non-regularized IPM by the blue line (consisted of stars). In Figure \ref{subfigure LP perf. prof. time}, we present the performance profile with respect to time required for convergence, while in Figure \ref{subfigure LP perf. prof. iter}, the performance profile with respect to the number of iterations. In both figures, the horizontal axis is in logarithmic scale, and it represents the ratio with respect to the best performance achieved by one of the two methods, for each problem. The vertical axis shows the percentage of problems solved by each method, for different values of the performance ratio. Robustness is ``measured" by the maximum achievable percentage, while efficiency by the rate of increase of each of the lines (faster increase translates to better efficiency). For more information about performance profiles, we refer the reader to \cite{paper_27}, where this benchmarking was proposed. One can see that all of our previous observations are verified in Figure \ref{figure LP perf. prof.}.

\begin{figure}[h]
\centering
\caption{Performance profiles over the pre-solved Netlib test set.} \label{figure LP perf. prof.}
\begin{subfigure}[H]{0.4\textwidth}
	\includegraphics[width=\textwidth]{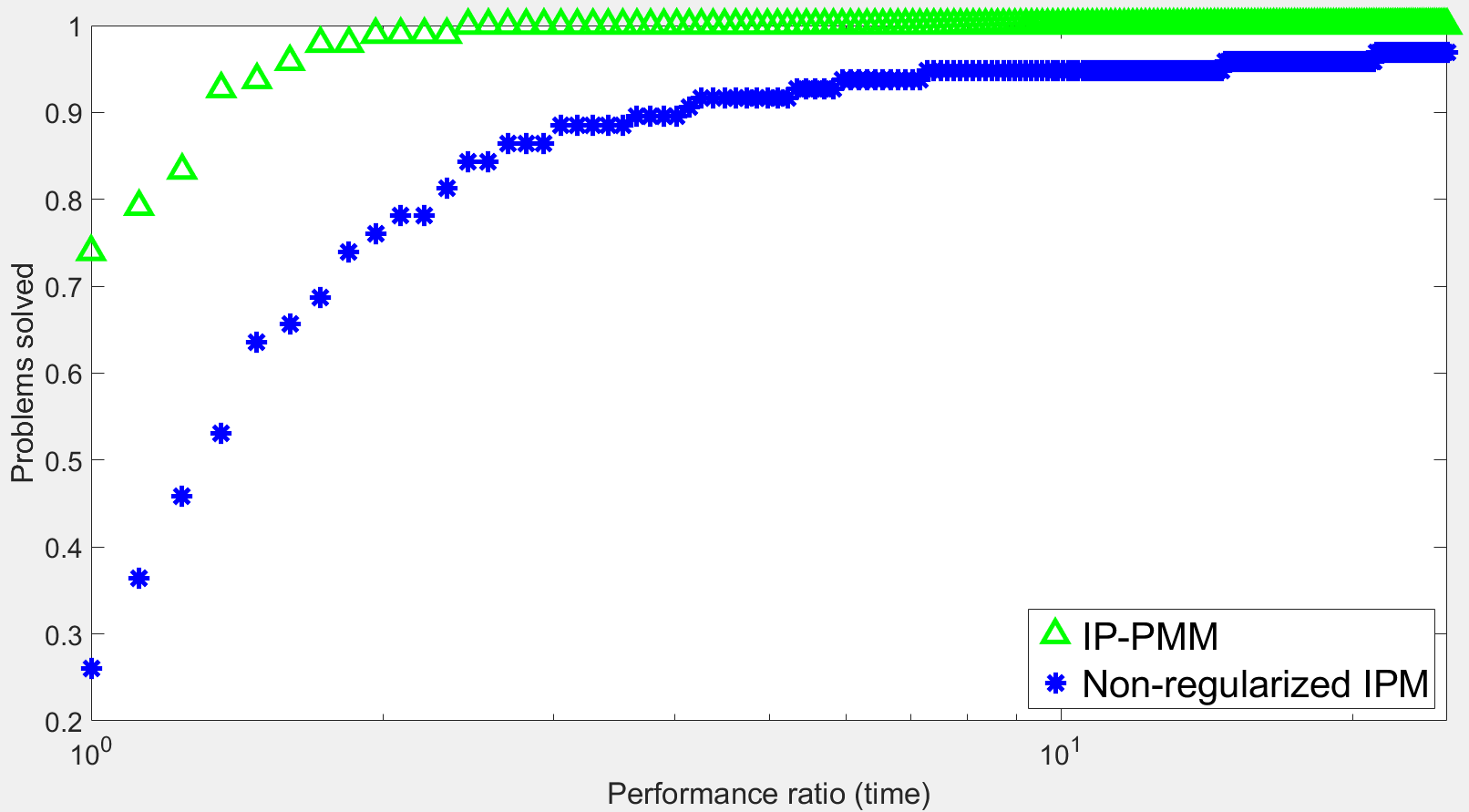}
	\caption{Performance profile - time.}
	\label{subfigure LP perf. prof. time}
\end{subfigure}
\quad
\begin{subfigure}[H]{0.4\textwidth}
	\includegraphics[width =\textwidth]{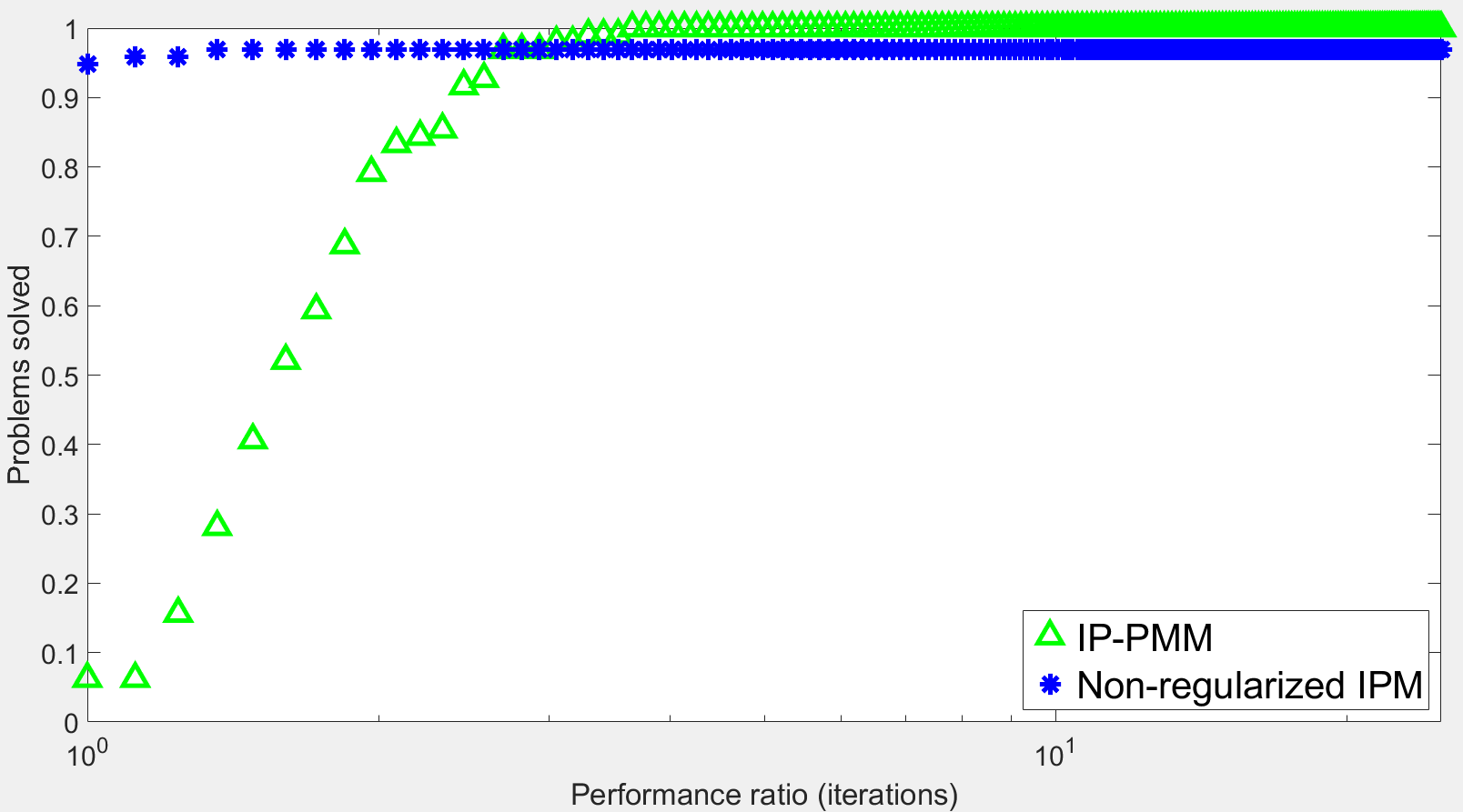}
	\caption{Performance profile - iterations.}
	\label{subfigure LP perf. prof. iter}
\end{subfigure}
\end{figure}

\subsubsection{Infeasible Problems}

\par In order to assess the accuracy of the proposed infeasibility detection criteria, we attempt to solve 28 infeasible problems, coming from the Netlib infeasible collection (\cite{paper_7}, see also \href{https://www.cise.ufl.edu/research/sparse/matrices/LPnetlib/index.html}{Infeasible Problems}). For 22 out of the 28 problems, the method was able to recognize that the problem under consideration is infeasible, and exit before the maximum number of iterations was reached. There were 4 problems, for which the method terminated after reaching the maximum number of iterations. For 1 problem the method was terminated early due to numerical instability. Finally, there was one problem for which the method converged to the least squares solution, which satisfied the optimality conditions for a tolerance of $10^{-6}$. Overall, IP-PMM run all 28 infeasible problems in 34 seconds (and a total of 1,813 IPM iterations). The proposed infeasibility detection mechanism had a 78\% rate of success over the infeasible test set, while no feasible problem was misclassified as infeasible. A more accurate infeasibility detection mechanism could be possible, however, the proposed approach is easy to implement and cheap from the computational point of view. Nevertheless, the interested reader is referred to \cite{paper_37,paper_42} and the references therein, for various other infeasibility detection methods.
 
\subsubsection{Quadratic Programming Problems}

\par Next, we present the comparison of the two methods over the Maros--M\'esz\'aros test set (\cite{paper_8}), which is comprised of 122 convex quadratic programming problems. Notice that in the previous experiments, we used the pre-solved version of the collection. However, we do not have a pre-solved version of this test set available. Since the focus of the paper is not on the pre-solve phase of convex problems, we present the comparison of the two methods over the set, without applying any pre-processing. As a consequence, non-regularized IPM fails to solve 27 out of the total 122 problems. However, only 11 out of 27 failed due to rank deficiency. The remaining 16 failures occurred due to numerical instability. On the contrary, IP-PMM solved the whole set successfully in 127 seconds (and a total of 3,014 iterations). As before, the required tolerance was set to $10^{-6}$.
\par In Table \ref{Table QP large instances}, we collect statistics from the runs of the two methods over some medium scale instances of the Maros--M\'esz\'aros collection. 

\begin{table}[!ht]
\centering
\caption{Medium-Scale Maros--M\'esz\'aros Problems\label{Table QP large instances}}
\begin{tabular}{@{\extracolsep{4pt}}rrrrrrrr@{}}
    \toprule
\multirow{2}{*}{\textbf{Name}}            & \multirow{2}{*}{$\bm{nnz(A)}$}            & \multirow{2}{*}{$\bm{nnz(Q)}$}& \multicolumn{2}{c}{{\textbf{IP-PMM}}}   & \multicolumn{2}{c}{\textbf{IPM}}\\  \cline{4-5}  \cline{6-7}
& & &{Time (s)} & { IP-Iter.}  & {Time (s)}& {IP-Iter.} \\ \midrule
AUG2DCQP &   $40,000 $ & $40,400$ & 4.70 & 83  & 7.21 & 111 \\
CVXQP1L &  $14,998$ & $69,968$ &25.54 & 38  &  $\dagger$ &  $\dagger$  \\
CVXQP3L & $22,497$ & $69,968$ & 45.69 & 59  & $\dagger$ & $\dagger$ \\ 
LISWET1 & $30,000$ & $10,002$ & 1.07 & 30  & 1.86 & 40  \\ 
POWELL20 & $20,000$ & $10,000$ &1.26 & 30  & 1.61 & 25   \\ 
QSHIP12L &  $16,170$ & $122,433$ &0.91 & 23  & $\dagger$ & $\dagger$ \\ 
STCQP1 & $13,338$ & $49,109$  & 0.38  &  16 &  6.89 &13 \\ \bottomrule
\end{tabular}
\end{table}
\par In order to summarize the comparison of the two methods, we include Figure \ref{figure QP perf. prof.}, which contains the performance profiles, over the Maros--M\'esz\'aros test set, of the two methods. IP-PMM is represented by the green line (consisted of  triangles), while non-regularized IPM by the blue line (consisted of stars). In Figure \ref{subfigure QP perf. prof. time}, we present the performance profile with respect to time required for convergence, while in Figure \ref{subfigure QP perf. prof. iter}, the performance profile with respect to the number of iterations.\\
\begin{figure}[h]
\centering
\caption{Performance profiles over the Maros--M\'esz\'aros test set.} \label{figure QP perf. prof.}
\begin{subfigure}[H]{0.4\textwidth}
	\includegraphics[width=\textwidth]{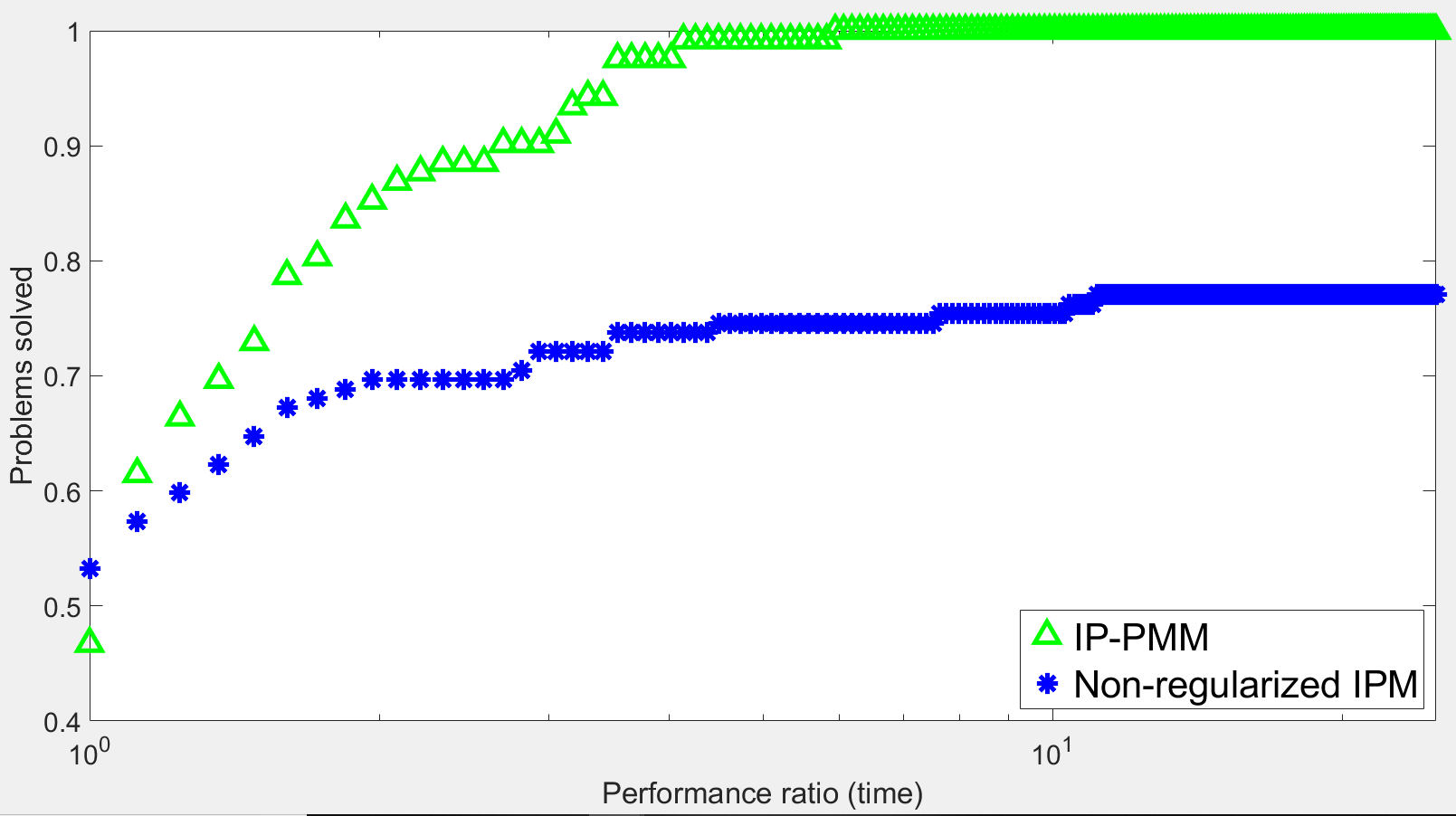}
	\caption{Performance profile - time.}
	\label{subfigure QP perf. prof. time}
\end{subfigure}
\quad
\begin{subfigure}[H]{0.4\textwidth}
	\includegraphics[width =\textwidth]{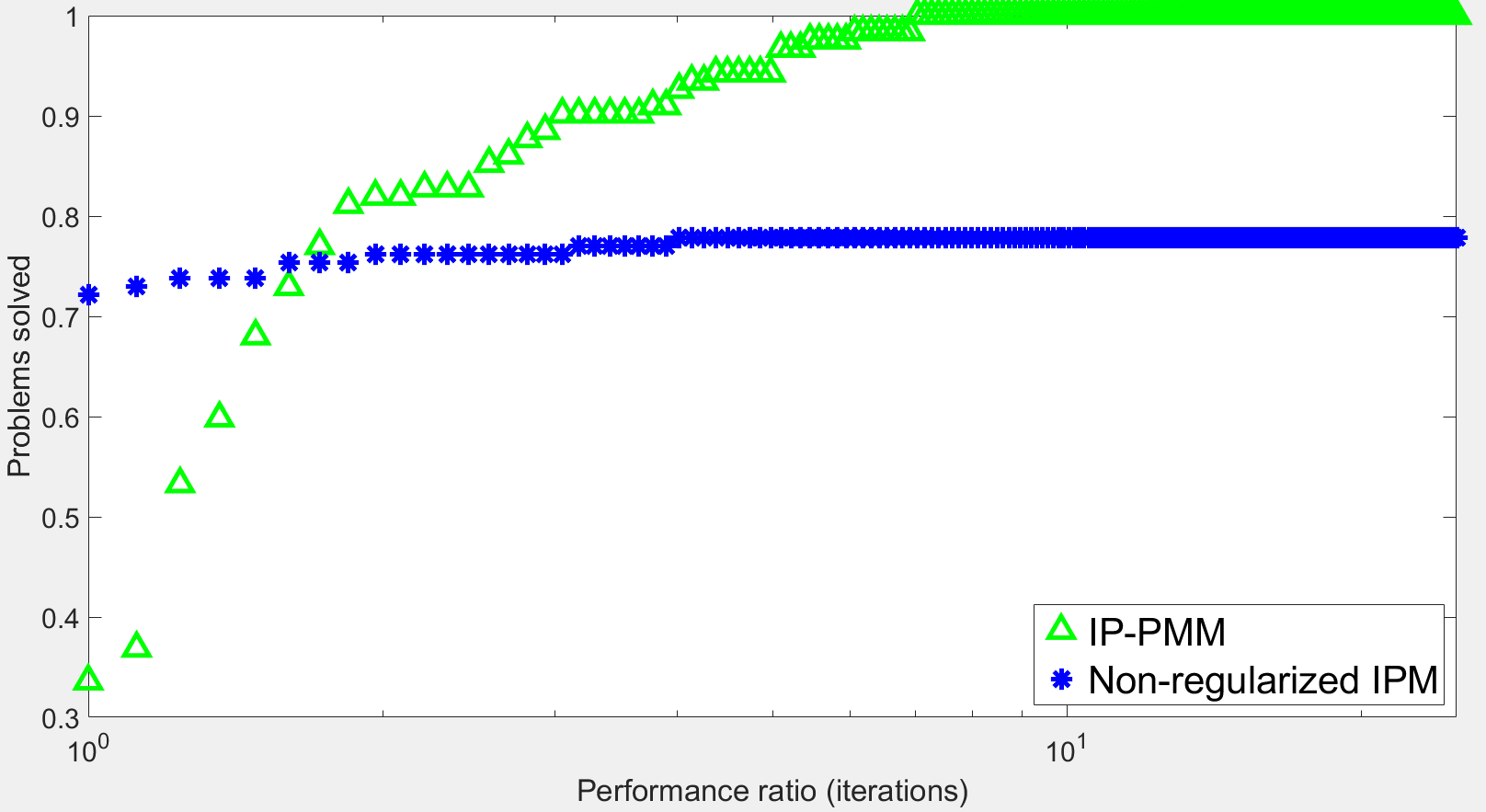}
	\caption{Performance profile - iterations.}
	\label{subfigure QP perf. prof. iter}
\end{subfigure}
\end{figure}

\noindent Similar remarks can be made here, as those given when summarizing the linear programming experiments. One can readily observe the importance of the stability introduced by the regularization. On the other hand, the overhead in terms of number of iterations that is introduced due to regularization, is acceptable due to the acceleration of the factorization (since we are guaranteed to have a quasi-definite augmented system).
\subsubsection{Verification of the Theory}

\par We have already presented the benefits of using regularization in interior point methods. A question arises, as to whether a regularized IPM can actually find an exact solution of the problem under consideration. Theoretically, we have proven this to be the case. However, in practice one is not allowed to decrease the regularization parameters indefinitely, since ill-conditioning will become a problem. Based on the theory of augmented Lagrangian methods, one knows that sufficiently small regularization parameters suffice for exactness (see \cite{book_6}, among others). In what follows, we provide a ``table of robustness" of IP-PMM. We run the method over the Netlib and the Maros-M\'esz\'aros collections, for decreasing values of the required tolerance and report the number of problems that converged.

\begin{table}[!ht]
\centering
\caption{Table of Robustness} \label{Table of robustness}
\begin{tabular}{@{\extracolsep{4pt}}rrr@{}}
    \toprule
\textbf{Test Set}          & \textbf{Tolerance}         & \textbf{Problems Converged} \\ \midrule
Netlib (non-presolved)  & $10^{-6}$ & 96/96   \\
"  & $10^{-8}$ & 95/96 \\
"  & $10^{-10}$ & 94/96 \\
Netlib (presolved)  & $10^{-6}$ & 96/96   \\
"  & $10^{-8}$ & 94/96 \\*
"  & $10^{-10}$ & 94/96 \\
 Maros-M\'esz\'aros  & $10^{-6}$ & 122/122   \\ 
" &  $10^{-8}$ &  121/122\\
" & $10^{-10}$ &  112/122\\ \bottomrule
\end{tabular}
\end{table}
\par One can observe from Table \ref{Table of robustness} that IP-PMM is sufficiently robust. Even in the case where a 10 digit accurate solution is required, the method is able to maintain a success rate larger than 91\%. 
\subsubsection{Large Scale Problems}

\par All of our previous experiments were conducted on small to medium scale linear and convex quadratic programming problems. We have showed (both theoretically and practically) that the proposed method is reliable. However, it is worth mentioning the limitations of the current approach. Since we employ exact factorization during the iterations of the IPM, we expect that the method will be limited in terms of the size of the problems it can solve. The main bottleneck arises from the factorization, which does not scale well in terms of processing time and memory requirements. In order to explore the limitations, in Table \ref{Table large scale} we provide the statistics of the runs of the method over a small set of large scale problems. It contains the number of non-zeros of the constraint matrices, as well as the time needed to solve the problem. The tolerance used in these experiments was $10^{-6}$.
\begin{table}[!ht]
\centering
\caption{Large-Scale Problems} \label{Table large scale}
\begin{tabular}{@{\extracolsep{4pt}}rrrrr@{}}
    \toprule
\textbf{Name}    & \textbf{Collection}        & $\bm{nnz(A)}$          & \textbf{time (s)} & \textbf{Status}\\ \midrule
fome21 & Mittelmann  & 751,365  & 567.26 & opt   \\
pds-10 & Mittelmann  & 250,307  & 40.00 & opt   \\
pds-30 & Mittelmann  & 564,988  & 447.81 & opt   \\
pds-60 & Mittelmann &  1,320,986 & 2,265.39 & opt   \\
pds-100 & Mittelmann & 1,953,757 & - & no memory   \\
rail582 & Mittelmann & 402,290 & 91.10 & opt   \\
cre-b & Kennington & 347,742  & 24.48 & opt   \\
cre-d & Kennington &  323,287  & 23.49 & opt   \\ 
stocfor3 & Kennington  & 72,721  & 4.56 & opt   \\
ken-18 & Kennington  & 667,569  & 77.94 & opt   \\
osa-30 & Kennington & 604,488 & 1723.96 & opt   \\ 
nug-20 & QAPLIB & 304,800 & 386.12 & opt   \\ 
nug-30 & QAPLIB  & 1,567,800 & - & no memory   \\ \bottomrule
\end{tabular}
\end{table}
\par From Table \ref{Table large scale}, it can be observed that as the dimension of the problem grows, the time to convergence is significantly increased. This increase in time is disproportionate for some problems. The reason for that, is that the required memory could exceed the available RAM, in which case the swap-file is activated. Access to the swap memory is extremely slow and hence the time could potentially increase disproportionately. On the other hand, we retrieve two failures due to lack of available memory. The previous issues could potentially be addressed by the use of iterative methods. Such methods, embedded in the IP-PMM framework, could significantly relax the memory as well as the processing requirements, at the expense of providing inexact directions. Combining IP-PMM (which is stable and reliable) with such an inexact scheme (which could accelerate the IPM iterations) seems to be a viable and competitive alternative and will be addressed in a future work. 
\section{Conclusions} \label{section conclusions}

\par In this paper, we present an algorithm suitable for solving convex quadratic programs. It arises from the combination of an infeasible interior point method with the proximal method of multipliers (IP-PMM). The method is interpreted as a primal-dual regularized IPM, and we prove that it is guaranteed to converge in a polynomial number of iterations, under standard assumptions. As the algorithm relies only on one penalty parameter, we use the well-known theory of IPMs to tune it. In particular, we treat this penalty as a barrier parameter, and hence the method is well-behaved independently of the problem under consideration. Additionally, we derive a necessary condition for infeasibility and use it to construct an infeasibility detection mechanism. The algorithm is implemented, and the reliability of the method is demonstrated. At the expense of some extra iterations, regularization improves the numerical properties of the interior point iterations. The increase in the number of iterations is benign, since factorization efficiency as well as stability is gained. Not only the method remains efficient, but it outperforms a similar non-regularized IPM scheme.
\par We observe the limitations of the current approach, due to the cost of factorization, and it is expected that embedding iterative methods in the current scheme might further improve the scalability of the algorithm at the expense of inexact directions. Since the reliability of IP-PMM is demonstrated, it only seems reasonable to allow for approximate Newton directions and still expect fast convergence. Hence, a future goal is to extend the theory as well as the implementation, in order to allow the use of iterative methods for solving the Newton system.

\section*{Acknowledgements}
\noindent SP acknowledges financial support from a Principal's Career Development PhD scholarship at the University of Edinburgh, as well as a scholarship from A. G. Leventis Foundation. JG acknowledges support from the EPSRC grant EP/N019652/1. JG and SP acknowledge support from the Google Faculty Research Award: ``Fast $(1+x)$-order methods for linear programming".

%
%

\bibliography{references} 
\bibliographystyle{abbrv}


\end{document}